\def\epsilon{\varepsilon}
\def\m{\textbf{m}}
\def\e{\textbf{e}}
\def\n{\textbf{n}}
\def\bu{\textbf{u}}
\def\d{\,\mathrm{d}}
\def\bx{\boldsymbol{x}}
\def\by{\boldsymbol{y}}
\def\bz{\boldsymbol{z}}
\def\bdiv{\mathrm{div}}
\def\hd{\mathbf{h}_{\mathrm{d}}}
\def\ha{\mathbf{h}_{\mathrm{a}}}
\def\G{\mathcal{G}_{\mathcal{L}}}
\newcommand{\abs}[1]{\lvert#1\rvert}
\newcommand{\nn}{\nonumber}
\newtheorem{theorem}{Theorem}
\newtheorem{assumption}{Assumption}
\newtheorem{lemma}{Lemma}
\newtheorem{remark}{Remark}[section]
\begin{document}	
\title[Homogenization of the LLG equation]{Homogenization of the Landau-Lifshitz-Gilbert equation with natural boundary condition}
	
\author{Jingrun Chen}
\address{School of Mathematical Sciences, University of Science and Technology of China, Hefei, Anhui 230026, China; Suzhou Institute for Advanced Research, University of Science and Technology of China, Suzhou, Jiangsu 215123, China}
\email{jingrunchen@ustc.edu.cn}

\author{Jian-Guo Liu}
\address{Department of Mathematics and Department of Physics, Duke University, Box 90320, Durham NC 27708, USA}
\email{jliu@phy.duke.edu }

\author{Zhiwei Sun}
\address{School of Mathematical Sciences, Soochow University, Suzhou, Jiangsu 215006, China}
\email{20194007008@stu.suda.edu.cn}

\subjclass[2010]{35B27; 65M15; 82D40}
\keywords{Homogenization; Landau-Lifshitz-Gilbert equation; Boundary layer; Magnetization dynamics; Micromagnetics}
\date{\today}

\maketitle

\begin{abstract}
The full Landau-Lifshitz-Gilbert equation with periodic material coefficients and natural boundary condition is employed to model the magnetization dynamics in composite ferromagnets. In this work, we establish the convergence between the homogenized solution and the original solution via a Lax equivalence theorem kind of argument. There are a few technical difficulties, including: 1) it is proven the classic choice of corrector to homogenization cannot provide the convergence result in the $H^1$ norm; 2) a boundary layer is induced due to the natural boundary condition; 3) the presence of stray field give rise to a multiscale potential problem. To keep the convergence rates near the boundary, we introduce the Neumann corrector with a high-order modification. Estimates on singular integral for disturbed functions and boundary layer are deduced, to conduct consistency analysis of stray field. Furthermore, inspired by length conservation of magnetization, we choose proper correctors in specific geometric space. These, together with a uniform $W^{1,6}$ estimate on original solution, provide the convergence rates in the $H^1$ sense.
\end{abstract}

\section{Introduction}\label{sec:introduction}
The intrinsic magnetic order of a rigid single-crystal ferromagnet over a region $\Omega\subset \mathbb{R}^n, n=1,2, 3$ is described by the magnetization $\mathbf{M}$ satisfying
\begin{equation*}
\mathbf{M} = M_s(T)\m, \quad \mbox{a.e. in $\Omega$},
\end{equation*}
where the saturation magnetization $M_s$ depends on the material and the temperature $T$. Below Curie temperature, $M_s$ is modeled as a constant.

A stable structure of a ferromagnet is mathematically characterized as the local minimizers of the Landau-Lifshitz energy functional \cite{LandauLifshitz1935}
\begin{equation*}
\begin{aligned}
\G[\m] 
:=& \int_{\Omega} a(\bx) \vert \nabla \m\vert^2 \d \bx
+ 
\int_{\Omega} K(\bx) \left( \m\cdot \bu \right)^2  (\m) \d \bx\\
&-
\mu_0\int_{\Omega} \hd[M_s\m]\cdot M_s\m \d \bx
-
\int_{\Omega} \ha\cdot M_s\m \d \bx\\
=:&
\mathcal{E}(\m)+\mathcal{A}(\m)+\mathcal{W}(\m)+\mathcal{Z}(\m).
\end{aligned}
\end{equation*}
$\mathcal{E}(\m)$ is the exchange energy, which penalizes the spatial variation of $\m$. The matrix $a = (a_{ij})_{1\le i,j \le 3}$ is symmetric, uniformly coercive and bounded, i.e.,
\begin{equation}\label{uniformly coercive}
\left\{ \begin{aligned}
& \sum_{i,j=1}^n a_{ij}(\bx) \eta_i\eta_j \ge a_{\mathrm{min}} \abs{\boldsymbol{\eta}}^2 \quad
\mbox{for any $\bx\in\mathbb{R}^n$, $\boldsymbol{\eta}\in \mathbb{R}^n$},\\
& \sum_{i,j=1}^n a_{ij}(\bx) \eta_i\xi_j \le a_{\mathrm{max}} \abs{\boldsymbol{\eta}}\abs{\boldsymbol{\xi}} \quad \mbox{for any $\bx\in\mathbb{R}^n$, $\boldsymbol{\eta},\ \xi\in \mathbb{R}^n$}.
\end{aligned} \right.
\end{equation}
In the anisotropy energy $\mathcal{A}(\m)$, $\bu$ is the easy-axis direction which depends on the crystallographic structure of the material. The anisotropy energy density is assumed to be a non-negatively even and globally Lipschitz continuous function that vanishes only on a finite set of unit
vectors (the easy axis). The third term $\mathcal{W}(\m)$ is the magnetostatic self-energy due to the dipolar magnetic field, also known as the stray field $\hd[\m]$. For an open bounded domain $\Omega$ with a Lipschitz boundary, the magnetization $\m \in L^p(\Omega, \mathbb{R}^3)$ generates a stray field satisfying
\begin{equation}\label{define hd}
\hd[\m] =\nabla U_m,
\end{equation}
where the potential
$U_m$ solves
\begin{equation}\label{eqn:stray field eqn}
\Delta U_m = -\bdiv(\m \mathcal{X}_{\Omega}),\quad \mbox{in $D'(\mathbb{R}^3)$}
\end{equation}
with $\m \mathcal{X}_{\Omega}$ the extension of $\m$ to $\mathbb{R}^3$ that vanishes outside $\Omega$. The existence and uniqueness of $U_m$ follows from the Lax-Milgram Theorem and $U_m$ satisfies the estimate \cite{Praetorius2004AnalysisOT}
\begin{equation}\label{bound of stray field}
\Vert \hd[\m] \Vert_{L^p(\Omega)}\le\Vert \m \Vert_{L^p(\Omega)}  \quad 1<p<\infty.
\end{equation}
The last term $\mathcal{Z}(\m)$ is the Zeeman energy that models the interaction between $\m$ and the externally applied magnetic field $\ha$.

For a composite ferromagnet with periodic micorstructures, the material constants are modeled with periodic material coefficients with period $\epsilon$, i.e., $a^\epsilon= a(\bx/\epsilon)$, $K^\epsilon = K(\bx/\epsilon)$, $M^\epsilon = M_s(\bx/\epsilon)$, with functions $a$, $K$, $M_s$ periodic over $Y=[0,1]^n$. The associated energy reads as
\begin{equation}\label{Gibbs landau energy of epsilon}
\begin{aligned}
\G^\epsilon[\m] :=& \int_{\Omega} a^{\epsilon}(\bx) \vert \nabla \m\vert^2 \d \bx
+ 
\int_{\Omega} K^\epsilon(\bx) \left( \m\cdot \bu \right)^2 \d \bx\\
&- \mu_0
\int_{\Omega} \hd[M^\epsilon\m]\cdot M^\epsilon\m \d \bx
-
\int_{\Omega} \ha\cdot M^\epsilon\m \d \bx.
\end{aligned}
\end{equation}
It is proved in \cite{Alouges_2015} that $\G^\epsilon[\m]$ is equi-mild coercive in the metric space $(H^1(\Omega, S^2), d_{L^2(\Omega,S^2)})$ and $\Gamma$-converges to the functional $\mathcal{G}_{\mathrm{hom}}$ defined as
\begin{align}\nn
\mathcal{G}_{\mathrm{hom}}[\m] 
=& \int_{\Omega} a^0 \vert \nabla \m\vert^2 \d \bx
+ 
\int_{\Omega} K^0 \left( \m\cdot \bu \right)^2 \d \bx
-
\mu_0
(M^0)^2\int_{\Omega} \hd [\m]\cdot \m \d \bx\\\label{homogenized energe}
&-
\mu_0 
\int_{\Omega\times Y}  \big\vert \m \cdot \mathbf{H}_{\mathrm{d}}[M_s(\by)](\by) \big\vert^2 \d \bx \d \by
-
M^0\int_{\Omega} \ha\cdot \m \d \bx,
\end{align}
where $a^0$ is the homogenized tensor
\begin{equation*}
a^0_{ij} = \int_Y\left( a_{ij} + \sum_{k= 1}^n a_{ik}\frac{\partial \chi_j}{\partial y_k} \right) \d \by,
\end{equation*}
the constants $M^0$ and $K^0$ are calculated by
\begin{equation*}
M^0 = \int_Y M_s(\boldsymbol{y}) \d \boldsymbol{y},
\quad 
K^0 =  \int_Y K(\boldsymbol{y}) \d \boldsymbol{y},
\end{equation*}
and the symmetric matrix-valued function $\mathbf{H}_{\mathrm{d}}[M_s(\by)](\by)= \nabla_{\by} \mathbf{U}(\by)$ with potential function given by
\begin{equation}\label{define H_d}
	\begin{gathered}
		\int_{Y} M_s(\boldsymbol{y})\nabla_{\boldsymbol{y}} \varphi(\boldsymbol{y}) \d \boldsymbol{y}
		=
		- \int_{Y} \nabla_{\boldsymbol{y}} \mathbf{U}(\by) \cdot \nabla_{\boldsymbol{y}} \varphi(\boldsymbol{y}) \d \boldsymbol{y},\\
		\mbox{$\mathbf{U}(\by)$ is $Y$-periodic}, \quad
		\int_{Y} \mathbf{U}(\by)  \d \boldsymbol{y}  = 0,
	\end{gathered}
\end{equation}
for any periodic function $\varphi\in H^1_{\mathrm{per}}(Y)$.

In the current work, we are interested in the convergence of the dynamic problem driven by the Landau-Lifshitz energy \eqref{Gibbs landau energy of epsilon} to the dynamics problem driven by the homogenized energy \eqref{homogenized energe} as $\epsilon$ goes to $0$. It is well known that the time evolution of the magnetization over $\Omega_T=\Omega\times[0, T]$ follows the Landau-Lifshitz-Gilbert (LLG) equation \cite{LandauLifshitz1935,Gilbert1955}
\begin{equation}\label{eqn:LLG system}
\left\{ \begin{aligned}
&\partial_t\m^\epsilon - \alpha \m^\epsilon \times \partial_t\m^\epsilon = 
- (1+\alpha^2) \m^\epsilon \times \mathcal{H}^\epsilon_e(\m^\epsilon) \quad \mbox{a.e. in $\Omega_T$},\\
& \boldsymbol{\nu}\cdot a^\epsilon\nabla \m^\epsilon = 0,\quad \mbox{a.e. on $\partial\Omega\times[0, T]$},\\
& \m^\epsilon(0,x) = \m_{\mathrm{init}}^\epsilon(x),\quad 
\abs{\m_{\mathrm{init}}^\epsilon(x)} = 1\quad \mbox{a.e. in $\Omega$},
\end{aligned} \right.
\end{equation}
where $\alpha>0$ is the damping constant, and the effective field $\mathcal{H}^\epsilon_e(\m^\epsilon)=
- \frac{\delta \G^\epsilon}{\delta \m^\epsilon}
$ associated to the Landau-Lifshitz energy \eqref{Gibbs landau energy of epsilon} is given by
\begin{equation}\label{eqn:H^epslon}
\begin{aligned}
\mathcal{H}^\epsilon_e(\m^\epsilon) 
= 
\mathrm{div}\left( a^\epsilon \nabla \m^\epsilon \right)
-
K^\epsilon \left( \m^\epsilon\cdot \bu \right)\bu 
+ 
\mu_0  M^{\epsilon} \hd[M^{\epsilon}\m^\epsilon]
+
M^{\epsilon} \ha.
\end{aligned}
\end{equation}
Meanwhile, the LLG equation associated to the homogenized energy \eqref{homogenized energe} reads as
\begin{equation}\label{eqn:homogenized LLG system}
\left\{ \begin{aligned}
&\begin{gathered} \partial_t\m_0 - \alpha \m_0 \times \partial_t\m_0 = 
- (1+\alpha^2) \m_0 \times \mathcal{H}^0_e(\m_0)
\end{gathered}\\
& \boldsymbol{\nu}\cdot a^0\nabla \m_0 = 0,\quad \mbox{a.e. on $\partial\Omega\times[0, T]$}\\
& \m_0(0,x) = \m_{\mathrm{init}}^0(x),\quad 
\abs{\m_{\mathrm{init}}(x)} = 1\quad \mbox{a.e. in $\Omega$}
\end{aligned} \right.
\end{equation}
with homogenized effective field $\mathcal{H}^0_e(\m_0)=
- \frac{\delta \mathcal{G}_{\mathrm{hom}} }{\delta \m_0} 
$ calculated by
\begin{equation}\label{homogenized eff field}
\begin{aligned}
\mathcal{H}^0_e(\m_0)
=& 
\mathrm{div}\left( a^0 \nabla \m_0 \right) 
- K^0 \left( \m_0\cdot \bu \right)\bu \\
& + 
\mu_0  (M^0)^2 \hd[\m_0]
+
\mu_0
\mathbf{H}^0_{\mathrm{d}} \cdot \m_0
+
M^0 \ha,
\end{aligned}
\end{equation}
where the matrix $\mathbf{H}^0_{\mathrm{d}} = 
\int_{Y}   M_s(\by) \mathbf{H}_{\mathrm{d}}[M_s(\by)](\by)\d \by$.

Works related to the homogenization of the LLG equation in the literature include \cite{SANTUGINIREPIQUET2007502,choquet2018homogenization,alouges2019stochastic,leitenmaier2021homogenization,leitenmaier2022upscaling,chen2021}. As for the convergence rate, most relevantly, the LLG equation \eqref{eqn:LLG system} with only the exchange term and with the periodic boundary condition is studied in \cite{leitenmaier2021homogenization}. Convergence rates between $\m^\epsilon$ and $\m_0$ in time interval $[0,\epsilon^\sigma T]$ are obtained under the assumption 
\begin{equation}\label{strong assumption}
\Vert \nabla \m^\epsilon \Vert_{L^\infty(\Omega)} \le C, \quad \mbox{for any $t\in [0,\epsilon^\sigma T]$},
\end{equation} 
where $C$ is a constant independent of $\epsilon$ and $\sigma \in [0,2)$. 
As a special case, when $\sigma = 0$ in assumption \eqref{strong assumption}, i.e., $\Vert \nabla \m^\epsilon \Vert_{L^\infty(\Omega)}$ is uniformly bounded over a time interval independent of $\epsilon$, it is proven that $\Vert\m^\epsilon-\m_0\Vert_{L^\infty(0,T;L^2(\Omega))} = \mathcal{O}(\epsilon)$ while 
$\Vert\m^\epsilon-\m_0\Vert_{L^\infty(0,T;H^1(\Omega))}$ is only uniformly bounded without strong convergence rate. 

In this work, we consider the full LLG model \eqref{eqn:LLG system} equipped with the Neumann boundary condition, which is the original model derived by Landau and Lifshitz \cite{LandauLifshitz1935}. We prove the convergence rates between $\m^\epsilon$ and $\m_0$ in the $L^\infty(0,T;H^1(\Omega))$ sense without the strong assumption \eqref{strong assumption}. It is worth mentioning that, the trick to improve the convergence result into $H^1$ sense is to find proper correctors $\m_1$, $\m_2$, such that they satisfy geometric properties
\begin{equation}\label{geo property}
	\m_0\cdot\m_1 = 0,
	\quad\mbox{and}\quad
	\m_0\cdot\m_2 = -\abs{\m_1}^2,
\end{equation}
which are motivated by the length-preserving property of magnetization and asymptotic expansion. 
A familiar definition of classic first-order homogenization corrector $\m_1$ in \eqref{eqn:sln m1} would naturally satisfies first property in \eqref{geo property}; see \cite{leitenmaier2021homogenization}. In this article, the suitable corrector $\m_2$ in \eqref{geo property} is obtained.
By the usage of these properties,
we are able to derive the estimate of consistency error, which is induced by an equivalent form of LLG equation, given in \eqref{eqn:LLG system form 3}, and a sharper estimate than \cite{leitenmaier2021homogenization} in $L^\infty(0,T;H^1(\Omega))$ sense is finally obtained.

Instead of the assumption \eqref{strong assumption}, we prove a weak result that $\Vert\nabla \m^\epsilon\Vert_{L^6(\Omega)}$ is uniformly bounded over a time interval independent of $\epsilon$. Such a uniform estimate is nontrivial for the LLG equation, since the standard energy estimate usually transforms the degenerate (damping) term into the diffusion term and thus the upper bound becomes $\epsilon$-dependent. To overcome this difficulty, we introduce the interpolation inequality when $n\le 3$
\begin{equation}\label{term of high order in intro}
	\begin{aligned}
	\Vert \mathrm{div}\left( a^\epsilon \nabla \m \right) \Vert_{L^{3}(\Omega)}^{3}
	\le &
	C + C\Vert \mathrm{div}\left( a^\epsilon \nabla \m \right) \Vert_{L^{2}(\Omega)}^{6}\\
	& + C
	\Vert \m \times \nabla \left\{ \mathrm{div}\left( a^\epsilon \nabla \m \right)\right\} \Vert_{L^{2}(\Omega)}^{2},
	\end{aligned}
\end{equation}
for the $S^2$-value function $\m$ satisfying homogeneous Neumann boundary condition. 
This inequality can help us
derive a structure-preserving energy estimate, in which the degenerate term is kept in the energy. 

The full LLG model \eqref{eqn:LLG system} we considered contains the stray field, where an independent homogenization problem of potential function in the distribution sense arises, and this complicated the problem when we arrive at the consistency analysis. By using results in \cite{Praetorius2004AnalysisOT} and Green's representation formula, the stray field is rewritten as the derivatives of Newtonian potential. Then we are able to obtain the consistency error by deriving detailed estimate of singular integral for disturbed function and boundary layer.

The effect of boundary layer exists when we apply classic homogenization corrector to the Neumann boundary problem, which would cause the approximation deterioration on the boundary. To avoid this, a Neumann corrector is introduced, which is usually used in elliptic homogenization problems (see \cite{shen2018periodic} for example). 
In this article, we provide a strategy to apply the Neumann corrector to evolutionary LLG equations, by finding a proper higher-order modification.
For a big picture, let us write ahead the linear parabolic equation of error
\begin{equation*}
	\partial_t\e^\epsilon - \mathcal{L}^\epsilon \e^\epsilon + \boldsymbol{f}^\epsilon
	=\boldsymbol{0},
\end{equation*}
whose detailed derivation can be found in \eqref{system of e}.
Following the notation of $\e^\epsilon_{{\mathrm{b}}} = \e^\epsilon - \boldsymbol{\omega}_\mathrm{b}$ with boundary corrector $\boldsymbol{\omega}_\mathrm{b}$, one can find by above equation that an $L^\infty(0,T;H^1(\Omega))$ norm of $\e^\epsilon_{{\mathrm{b}}}$ relies on the boundary data and inhomogeneous term induced by $\boldsymbol{\omega}_\mathrm{b}$, which read as
\begin{equation}\label{terms of boundary corrector}
	\begin{aligned}
		\Vert \boldsymbol{\nu} \cdot a^\epsilon\nabla\{\e^\epsilon + \boldsymbol{\omega}_\mathrm{b}\} \Vert_{B^{-1/2, 2} (\partial\Omega)}
		\quad \mbox{and} \quad 
		\Vert
		\mathcal{L}^\epsilon \boldsymbol{\omega}_\mathrm{b}  \Vert_{ L^{2} (\Omega)}.
	\end{aligned}
\end{equation}
In this end, we divide the corrector $\boldsymbol{\omega}_\mathrm{b}$ into two parts as $\boldsymbol{\omega}_\mathrm{b} = \boldsymbol{\omega}_\mathrm{N} - \boldsymbol{\omega}_\mathrm{M}$, such that they can control two terms in \eqref{terms of boundary corrector} respectively. Here $\boldsymbol{\omega}_\mathrm{N}$ is the Neumann corrector used in elliptic problems (see \cite{shen2018periodic}), and $\boldsymbol{\omega}_\mathrm{M}$ is a modification to be determined. We point out the modification $\boldsymbol{\omega}_\mathrm{M}$ is necessary since calculation implies some bad terms in $\mathcal{L}^\epsilon\boldsymbol{\omega}_\mathrm{N}$ do not converge in $L^{2}$ sense. Therefore we construct following elliptic problem to determine $\boldsymbol{\omega}_\mathrm{M}$:
\begin{equation*}
	\bdiv (a^\epsilon\nabla \boldsymbol{\omega}_\mathrm{M})
	=
	\Big(\mbox{Bad Terms in $\mathcal{L}^\epsilon \boldsymbol{\omega}_\mathrm{N}$}\Big)
\end{equation*}
with proper Neumann boundary condition. Such a solution $\boldsymbol{\omega}_\mathrm{M}$ can be proved to have better estimates than $\boldsymbol{\omega}_\mathrm{N}$, by the observation that all ``Bad Terms in $\mathcal{L}^\epsilon \boldsymbol{\omega}_\mathrm{N}$" can be written in the divergence form. At this point, $\boldsymbol{\omega}_\mathrm{M}$ can be viewed as a high-order modification.

This paper is organized as follows. In the next Section, we introduce the main result of our article and outline the main steps of the proof. In Section \ref{expansion}, multiscale expansions are used to derive the second-order corrector $\m_2$. In Section \ref{consistency}, we deduce that the consistency error $\boldsymbol{f}^\epsilon$ only relies on the consistency error of the stray field, which can be estimated by calculation of singular integral for disturbed function and boundary layer.
In Section \ref{section:boundary corrector}, we introduce the boundary corrector $\boldsymbol{\omega}_\mathrm{b}$, and derive several relevant estimates of it. 
Section \ref{sec:stability} contains the stability analysis in $L^2$ and $H^1$ sense respectively. And we finally give a uniform regularity analysis of $\m^\epsilon$, by deriving a structure-preserving energy estimate in Section \ref{Preliminary}. 

\section{Main result}\label{main result}

To proceed, we make the following assumption
\begin{assumption}\label{assumption}
\item[1.] \textbf{Smoothness} We assume $Y$-periodic functions $a(\by) = (a_{ij}(\by))_{1\le i,j \le 3}$, $K(\by)$, $M_s(\by)$, and the time-independent external field $\ha(\bx)$, alone with boundary $\partial \Omega$, are sufficiently smooth. These together with the definition in \eqref{eqn:sln m1}, \eqref{eqn:system m2 form3}, \eqref{define w_b}, \eqref{define tilde omega} leads to the smoothness of $\m_1$, $\m_2$ and $\boldsymbol{\omega}_\mathrm{b}$.
\item[2.] \textbf{Initial data} Assume $\m_{\mathrm{init}}^0(\bx)$ and $\m_{\mathrm{init}}^\epsilon(\bx)$ are smooth enough and satisfy the Neumann compatibility condition:
\begin{equation}\label{compatibility condition of initial data}
		\boldsymbol{\nu}\cdot a^\epsilon\nabla \m_{\mathrm{init}}^\epsilon(\bx) 
		= 
		\boldsymbol{\nu}\cdot a^0\nabla \m_{\mathrm{init}}^0(\bx)=0 ,  
		\quad \bx\in \partial\Omega.
\end{equation}
Furthermore, we might as well set them satisfying
periodically disturbed elliptic problem:
\begin{equation}\label{initial data}
\begin{aligned}
\bdiv  (a^\epsilon \nabla \m_{\mathrm{init}}^\epsilon(\bx))
=
\bdiv  (a^0 \nabla \m_{\mathrm{init}}^0(\bx)),
\quad \bx\in\Omega.
\end{aligned}
\end{equation}
\eqref{compatibility condition of initial data}-\eqref{initial data} imply $\m_{\mathrm{init}}^0(\bx)$ is the homogenization of $\m_{\mathrm{init}}^\epsilon(\bx)$. Note that the assumption \eqref{initial data} is necessary not only for the convergence analysis in Theorem \ref{thm: Estimates of initial-boundary data}, but also for the uniform estimate of $\m^\epsilon$ in Theorem \ref{regularety 3}.
\end{assumption}
Now let us state our main result:
\begin{theorem}\label{thm:1}
	Let $\m^\epsilon \in L^{\infty}(0,T;H^2(\Omega))$, $\m_0 \in L^{\infty}(0,T;H^6(\Omega))$ be the unique solutions of \eqref{eqn:LLG system} and \eqref{eqn:homogenized LLG system}, respectively. Under Assumption \ref{assumption}, there exists some $T^*\in (0, T]$ independent of $\epsilon$, such that for any $t\in (0,T^*)$ and for 
	$n=2,3$, it holds 
	\begin{equation}\label{eqn:conclution in thm 1}
	\begin{gathered}
	\Vert \m^\epsilon(t) -  \m_0(t) \Vert_{L^2(\Omega)}
	\le \beta(\epsilon), \quad
	\Vert \m^\epsilon(t) -  \m_0(t) \Vert_{H^1(\Omega)}
	\le C \epsilon^{1/2},
	\end{gathered}
	\end{equation}
	where
	\begin{equation}\label{value of beta}
	\beta(\epsilon) =
	\left\{\begin{aligned}
	&C \epsilon [\ln (\epsilon^{-1} + 1)]^2, \quad \mbox{when $n =2$,}\\
	&C \epsilon^{5/6}, \quad \mbox{when $n =3$.}
	\end{aligned}\right.
	\end{equation}
	In the absence of the stray field, i.e., $\mu_0 = 0$, then it holds for any $t\in (0,T^*)$ and for  $n=1,2,3$
	\begin{equation}\label{value of sigma 2}
	\begin{gathered}
	\Vert \m^\epsilon(t) -  \m_0(t) \Vert_{L^2(\Omega)}
	\le C \epsilon [\ln (\epsilon^{-1} + 1)]^2, \\
	\Vert\m^\epsilon(t) - \m_0(t) - (\boldsymbol{\Phi} - \boldsymbol{x} )\nabla\m_0(t) \Vert_{H^1(\Omega)}
	\le C \epsilon [\ln (\epsilon^{-1} + 1)]^2,
	\end{gathered}
	\end{equation}
	where $\bx$ is spatial variable, $\boldsymbol{\Phi}= (\Phi_i)_{1\le i \le n}$ is the corrector defined in \eqref{define Phi}. Constant $C$ depends on the initial data $\m_{\mathrm{init}}^\epsilon$ and $\m_{\mathrm{init}}^0$, but is independent of $\epsilon$.
	\begin{remark}
	Comparing \eqref{eqn:conclution in thm 1} and \eqref{value of sigma 2}, one can see that in the $L^2$ norm, the stray field makes little influence when $n = 2$, but causes $1/6$-order loss of rate when $n=3$. In the $H^1$ norm, however, the stray field leads to $1/2$-order loss of rate in both cases. Such a deterioration of convergence rate is induced since the zero-extension has been applied for stray field \eqref{eqn:stray field eqn}, which introduces a boundary layer.
	\end{remark}
	\begin{remark}
	The logarithmic growth
	$[\ln (\epsilon^{-1} + 1)]^2$ in \eqref{value of sigma 2} is caused by the Neumann corrector $(\Phi - \boldsymbol{x} )\nabla\m_0$. For problems \eqref{eqn:LLG system} and \eqref{eqn:homogenized LLG system} with periodic boundary condition over a cube, by replacing the Neumann corrector in \eqref{value of sigma 2} with the classical two-scale corrector, a similar argument in the current work leads to
		\begin{equation}\label{convergence in periodic condition}
		\begin{gathered}
		\Vert \m^\epsilon - \m_0 - \boldsymbol{\chi} \nabla\m_0  \Vert_{H^1(\Omega)}
		\le C \epsilon,
		\end{gathered}
		\end{equation}
		where $\boldsymbol{\chi} = (\chi_i)_{1\le i \le n}$ is defined in \eqref{eqn:chi_j}.
		
		Note that \eqref{convergence in periodic condition} is consistent with the $L^2$ result in \cite{leitenmaier2021homogenization}. However, only the uniform boundedness in $H^1$ has been shown in \cite{leitenmaier2021homogenization},  while our results \eqref{value of sigma 2} and \eqref{convergence in periodic condition} imply that it maintains the same convergence rate in $L^2$ and $H^1$ norm, by choosing the correctors satisfying specific geometric property \eqref{geo property}.
	\end{remark}
\end{theorem}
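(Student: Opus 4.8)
The plan is to establish \eqref{eqn:conclution in thm 1}--\eqref{value of sigma 2} through a consistency--stability (Lax-equivalence) argument adapted to the nonlinear geometric flow \eqref{eqn:LLG system}. The first step is to build an approximate solution
\[
\widetilde{\m}^\epsilon := \m_0 + \epsilon\,\m_1 + \epsilon^2\,\m_2 + \boldsymbol{\omega}_{\mathrm{b}},
\]
where $\m_1$ is the classical first-order two-scale corrector of \eqref{eqn:sln m1} (which automatically satisfies $\m_0\cdot\m_1=0$, cf.\ \cite{leitenmaier2021homogenization}), $\m_2$ is the second-order corrector constructed by multiscale expansion in Section~\ref{expansion} so as to enforce $\m_0\cdot\m_2 = -\abs{\m_1}^2$ in the sense of \eqref{geo property}, and $\boldsymbol{\omega}_{\mathrm{b}} = \boldsymbol{\omega}_{\mathrm{N}} - \boldsymbol{\omega}_{\mathrm{M}}$ is a boundary corrector; in the $H^1$ statement \eqref{value of sigma 2} the term $(\boldsymbol{\Phi}-\bx)\nabla\m_0$ of \eqref{define Phi} is precisely $\epsilon\m_1$ replaced by its Neumann-corrected version. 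Substituting $\widetilde{\m}^\epsilon$ into \eqref{eqn:LLG system} produces a residual $\boldsymbol{f}^\epsilon$, and the error $\e^\epsilon := \m^\epsilon - \m_0 - \epsilon\m_1 - \epsilon^2\m_2$ satisfies a linear parabolic system $\partial_t\e^\epsilon - \mathcal{L}^\epsilon\e^\epsilon + \boldsymbol{f}^\epsilon = \boldsymbol{0}$ of the type announced in \eqref{system of e}, whose homogeneous Neumann condition fails only by an $O(\epsilon)$ flux. Convergence then reduces to: (i) a consistency bound making $\boldsymbol{f}^\epsilon$ and the initial discrepancy (controlled through \eqref{compatibility condition of initial data}--\eqref{initial data}) small; (ii) a stability estimate for the linear system in $L^2$ and $H^1$; and (iii) an $\epsilon$-uniform bound on $\nabla\m^\epsilon$ making the constants in (ii) $\epsilon$-independent.

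For (i), the leading-order cancellations are exactly what \eqref{geo property} supplies: rewriting LLG in the equivalent form \eqref{eqn:LLG system form 3} and expanding $\widetilde{\m}^\epsilon$, the cross-product terms that are a priori only $O(1)$ in $L^2(\Omega)$ recombine --- using $\m_0\cdot\m_1=0$, $\m_0\cdot\m_2 = -\abs{\m_1}^2$, and $\abs{\m^\epsilon}=1$ --- into $O(\epsilon)$ contributions (up to a logarithm). The delicate piece of $\boldsymbol{f}^\epsilon$ is the stray field: since $\hd[M^\epsilon\m^\epsilon]$ is defined through the distributional problem \eqref{eqn:stray field eqn} with the zero extension $\mathcal{X}_\Omega$, homogenizing it is itself a multiscale potential problem. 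Following Section~\ref{consistency}, I would combine the bound \eqref{bound of stray field} of \cite{Praetorius2004AnalysisOT} with Green's representation to rewrite the stray field as derivatives of a Newtonian potential, then estimate the resulting singular integrals acting on the ``disturbed'' profiles $g(\bx,\bx/\epsilon)$ and on the boundary layer. This is the source both of the half-order loss in $H^1$ and of the gap between $\epsilon[\ln(\epsilon^{-1}+1)]^2$ (for $n=2$) and $\epsilon^{5/6}$ (for $n=3$) in $L^2$, the zero extension being only Lipschitz across $\partial\Omega$; when $\mu_0=0$ this obstruction disappears, hence the sharper rates \eqref{value of sigma 2} valid also for $n=1$.

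The boundary corrector (Section~\ref{section:boundary corrector}) must remove the $O(\epsilon)$ boundary flux of $\e^\epsilon$ without damaging the interior estimate. I would take $\boldsymbol{\omega}_{\mathrm{N}}$ to be the Neumann corrector of elliptic homogenization \cite{shen2018periodic}, so that the trace $\boldsymbol{\nu}\cdot a^\epsilon\nabla\{\e^\epsilon + \boldsymbol{\omega}_{\mathrm{b}}\}$ is controlled in $B^{-1/2,2}(\partial\Omega)$; since $\mathcal{L}^\epsilon\boldsymbol{\omega}_{\mathrm{N}}$ still carries terms that do not converge in $L^2(\Omega)$ but are all in divergence form, I would then solve the auxiliary elliptic problem $\mathrm{div}(a^\epsilon\nabla\boldsymbol{\omega}_{\mathrm{M}}) = (\text{those bad terms})$ with a compatible Neumann datum and set $\boldsymbol{\omega}_{\mathrm{b}} = \boldsymbol{\omega}_{\mathrm{N}} - \boldsymbol{\omega}_{\mathrm{M}}$; the divergence structure forces $\boldsymbol{\omega}_{\mathrm{M}}$ to be a genuine higher-order term, so both quantities in \eqref{terms of boundary corrector} become $o(1)$, the logarithmic factor descending from the known pointwise bounds on $\boldsymbol{\omega}_{\mathrm{N}}$. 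This produces a workable equation for $\e^\epsilon_{\mathrm{b}} := \e^\epsilon - \boldsymbol{\omega}_{\mathrm{b}}$, which now obeys the homogeneous Neumann condition.

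For (ii)--(iii) I would run $L^2$ and then $H^1$ energy estimates on the parabolic equation for $\e^\epsilon_{\mathrm{b}}$ and close by Gr\"onwall, the constants depending only on norms of $\m_0$, of the correctors, and of $\nabla\m^\epsilon$, and then translate the conclusion back to $\m^\epsilon - \m_0$. The genuinely hard input is the $\epsilon$-uniform control of $\nabla\m^\epsilon$ provided by Theorem~\ref{regularety 3}: the naive energy estimate converts the degenerate gyromagnetic/damping term into a diffusion term with an $\epsilon$-dependent coefficient, so instead one must derive a structure-preserving estimate that keeps the degenerate dissipation $\Vert\m^\epsilon\times\nabla\{\mathrm{div}(a^\epsilon\nabla\m^\epsilon)\}\Vert_{L^2}^2$ on the dissipative side of the energy identity. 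The interpolation inequality \eqref{term of high order in intro}, valid for $S^2$-valued $\m$ with homogeneous Neumann data when $n\le 3$, then lets one re-express the supercritical $L^3$-type terms that appear in terms of the $L^2$ energy and that same dissipation, the latter being absorbed; what remains is a superlinear differential inequality for $\Vert\mathrm{div}(a^\epsilon\nabla\m^\epsilon)\Vert_{L^2}^2$ whose blow-up time is independent of $\epsilon$, which is the origin of the horizon $T^*$, and on $[0,T^*]$ it yields the uniform bound $\m^\epsilon\in L^\infty(0,T^*;W^{1,6}(\Omega))$ needed in (ii). I expect the two principal obstacles to be exactly this uniform $W^{1,6}$ regularity of $\m^\epsilon$ and the stray-field/boundary-layer consistency analysis; the remaining energy and elliptic-regularity computations, while lengthy, are of a standard character.
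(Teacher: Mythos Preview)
Your proposal is correct and follows essentially the same route as the paper: the Lax-equivalence decomposition into consistency (Section~\ref{consistency}), boundary correction $\boldsymbol{\omega}_{\mathrm{b}}=\boldsymbol{\omega}_{\mathrm{N}}-\boldsymbol{\omega}_{\mathrm{M}}$ (Section~\ref{section:boundary corrector}), $L^2$/$H^1$ stability (Section~\ref{sec:stability}), and the structure-preserving $W^{1,6}$ regularity via \eqref{term of high order in intro} (Section~\ref{Preliminary}) are all exactly as you describe. Two small corrections of detail: in the paper's notation $\widetilde{\m}^\epsilon$ is $\m_0+\epsilon\m_1+\epsilon^2\m_2$ \emph{without} $\boldsymbol{\omega}_{\mathrm{b}}$ (cf.\ \eqref{def approximate sln}), the latter being subtracted only at the level of $\e^\epsilon_{\mathrm{b}}=\e^\epsilon-\boldsymbol{\omega}_{\mathrm{b}}$; and $\e^\epsilon_{\mathrm{b}}$ does \emph{not} satisfy the homogeneous Neumann condition exactly---its conormal trace is merely $O(\epsilon\ln(\epsilon^{-1}+1))$ in $B^{-1/2,2}(\partial\Omega)$ (Theorem~\ref{thm: Estimates of initial-boundary data}), and the stability Theorems~\ref{theorem:stability}--\ref{theorem:stability 2} are stated with a nonzero boundary datum $\mathbf{g}$ to absorb this remainder.
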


\subsection{Some notations and Lax equivalence type theorem}
Recall that a classical solution to \eqref{eqn:LLG system} also satisfies an equivalent form of equation, reads
\begin{equation}\label{eqn:LLG system form 3}
	\mathcal{L}_{\mathrm{LLG}}(\m^\epsilon)
	:=
	\partial_t\m^\epsilon - \alpha \mathcal{H}^\epsilon_e(\m^\epsilon) 
	+ \m^\epsilon \times \mathcal{H}^\epsilon_e(\m^\epsilon) 
	- \alpha  g_l^\epsilon[\m^\epsilon]
	\m^\epsilon
	=0,
\end{equation}
where
the $g_l^\epsilon[\cdot]$ is the energy density calculated by
\begin{equation}\label{energy density of epsilon}
	g_l^\epsilon[\m^\epsilon] =  a^{\epsilon} \vert \nabla \m^\epsilon\vert^2 
	+ 
	K^{\epsilon} \left( \m^\epsilon\cdot \bu \right)\bu
	-
	\hd[M^\epsilon\m^\epsilon]\cdot M^\epsilon\m^\epsilon
	-
	\ha\cdot M^\epsilon\m^\epsilon .
\end{equation}
For convenience, we also define a bilinear operator deduced from \eqref{energy density of epsilon}, which reads
\begin{equation*}
	\mathcal{B}^\epsilon[\m,\n] = a^\epsilon \nabla \m \cdot 
	\nabla \n 
	+ K^\epsilon \left( \m\cdot \bu \right)\left( \n\cdot \bu \right) 
	-\mu_0  \hd [M^{\epsilon}\m]\cdot M^{\epsilon}\n.
\end{equation*}

Now let us set up the equation of error, in terms of Lax equivalence theorem kind of argument. Define the approximate solution 
\begin{equation}\label{def approximate sln}
	\widetilde{\m}^\epsilon(\bx) = \m_0(\bx) + \epsilon \m_1(\bx,\frac{\bx}{\epsilon}) + \epsilon^2\m_2(\bx,\frac{\bx}{\epsilon}),
\end{equation}
where $\m_0$ is the homogenized solution to \eqref{eqn:homogenized LLG system}, $\m_1$ is the first-order corrector defined in \eqref{eqn:sln m1}, and $\m_2$ is the second-order corrector determined by Theorem \ref{lemma m2}.
Then replacing $\m^\epsilon$ by $\widetilde{\m}^\epsilon$ in \eqref{eqn:LLG system form 3} provides the notation of consistence error $\boldsymbol{f}^\epsilon$:
\begin{equation}\label{eqn:equivalent system of m epsilon}
	\mathcal{L}_{\mathrm{LLG}}(\widetilde{\m}^\epsilon)
	=\boldsymbol{f}^\epsilon.
\end{equation}
Together \eqref{eqn:LLG system form 3} and \eqref{eqn:equivalent system of m epsilon}, we can obtain the equation of error $\e^\epsilon = \m^\epsilon - \widetilde{\m}^\epsilon$, denoted by
\begin{equation}\label{system of e}
	\partial_t\e^\epsilon - \mathcal{L}^\epsilon \e^\epsilon + \boldsymbol{f}^\epsilon
	=\boldsymbol{0},
\end{equation}
where $\mathcal{L}^\epsilon$ is second-order linear elliptic operator depending on $\m^\epsilon$ and $\widetilde{\m}^\epsilon$, that can be characterized as
\begin{equation}\label{def L eps}
	\mathcal{L}^\epsilon(\e^\epsilon) =
	\alpha \widetilde{\mathcal{H}}^\epsilon_e(\e^\epsilon) 
	-
	\mathbf{D}_1(\e^\epsilon) - \mathbf{D}_2(\e^\epsilon).
\end{equation}
Here $\widetilde{\mathcal{H}}^\epsilon_e(\m^\epsilon)$ is the linear part of $\mathcal{H}^\epsilon_e(\m^\epsilon) $, i.e.,
\begin{equation*}
	\begin{aligned}
		\widetilde{\mathcal{H}}^\epsilon_e(\m^\epsilon)
		& :=  \mathcal{H}^\epsilon_e(\m^\epsilon) -
		M^{\epsilon} \ha ,
	\end{aligned}
\end{equation*}
procession term $\mathbf{D}_1$ is calculated by
\begin{equation}\label{define D_1}
	\begin{aligned}
		\mathbf{D}_1 (\e^\epsilon)&= 
		\m^\epsilon \times \mathcal{H}^\epsilon_e(\m^\epsilon) 
		-
		\widetilde{\m}^\epsilon \times \mathcal{H}^\epsilon_e(\widetilde{\m}^\epsilon) \\
		&= 
		\m^\epsilon \times \widetilde{\mathcal{H}}^\epsilon_e(\e^\epsilon) 
		+
		\e^\epsilon \times \mathcal{H}^\epsilon_e(\widetilde{\m}^\epsilon),
	\end{aligned}
\end{equation}
and the degeneracy term $\mathbf{D}_2$ reads as
\begin{equation*}
	\begin{aligned}
		\mathbf{D}_2 (\e^\epsilon)&= 
		- \alpha  g_l^\epsilon[\m^\epsilon]
		\m^\epsilon
		+
		\alpha  g_l^\epsilon[\widetilde{\m}^\epsilon]
		\widetilde{\m}^\epsilon \\
		&= 
		- \alpha \big( \mathcal{B}^\epsilon[\e^\epsilon,\m^\epsilon ] 
		+ \mathcal{B}^\epsilon[\widetilde{\m}^\epsilon,\e^\epsilon ]  
		+ M^\epsilon (\ha\cdot \e^\epsilon) \big)\m^\epsilon
		- \alpha g_l^\epsilon[\widetilde{\m}^\epsilon]  \e^\epsilon.
	\end{aligned}
\end{equation*}

Moreover, we define a correctional error $\e^\epsilon_{{\mathrm{b}}}$ as
\begin{equation}\label{define e_b in intro}
	\e^\epsilon_{{\mathrm{b}}} = \e^\epsilon - \boldsymbol{\omega}_\mathrm{b},
\end{equation}
where $\boldsymbol{\omega}_\mathrm{b}$ is the boundary corrector satisfying $\boldsymbol{\omega}_\mathrm{b} = \boldsymbol{\omega}_\mathrm{N} - \boldsymbol{\omega}_\mathrm{M}$, for $\boldsymbol{\omega}_\mathrm{N}$ the Neumann corrector given in \eqref{define w_b}, and $\boldsymbol{\omega}_\mathrm{M}$ the modification determined in \eqref{define tilde omega}.
 Then equation \eqref{system of e} leads to
\begin{equation}\label{equation of e_b in intro}
	\partial_t\e^\epsilon_{{\mathrm{b}}} 
	- 
	\mathcal{L}^\epsilon \e^\epsilon_{\mathrm{b}} 
	+
	\big(\partial_t \boldsymbol{\omega}_\mathrm{b}
	-
	\mathcal{L}^\epsilon\boldsymbol{\omega}_\mathrm{b}
	+
	\boldsymbol{f}^\epsilon\big) =0.
\end{equation}
By the Lax equivalence theorem kind of argument, the estimate of error $\e^\epsilon_{{\mathrm{b}}}$ follows from consistency analysis of \eqref{eqn:equivalent system of m epsilon}, energy estimate of boundary corrector, and stability analysis of \eqref{equation of e_b in intro}.

\subsection{Proof of Theorem \ref{thm:1}}
\begin{proof}
Following the above notations,
for the consistency error $\boldsymbol{f}^\epsilon$, Theorem \ref{lemma:consistency m2} says that it can be divided as $\boldsymbol{f}^\epsilon = \boldsymbol{f}_0 + \widetilde{\boldsymbol{f}}$, satisfying $\Vert \widetilde{\boldsymbol{f}}(t) \Vert_{L^{2}(\Omega)}
\le C \epsilon$, and
\begin{equation*}
	\begin{aligned}
		&\Vert \boldsymbol{f}_0(t) \Vert_{L^{2}(\Omega)}
		= 0,\quad \text{when $\mu_0 = 0$},\\
		&\Vert \boldsymbol{f}_0(t) \Vert_{L^{r}(\Omega)}
		\le C_r \mu_0
		\big(\epsilon^{1/r}
		+
		\epsilon \ln (\epsilon^{-1} + 1)\big),\quad \text{when $\mu_0 > 0$, $n\neq1$},
	\end{aligned}
\end{equation*}
where constants $C_r$ and $C$ are independent of $\epsilon$, for any $t\in (0,T)$, and $1\le r < +\infty$.
Considering the boundary corrector terms in \eqref{equation of e_b in intro}, by Theorem \ref{theorem:omega} there exists $C = C ( \Vert \nabla \m^\epsilon \Vert_{L^{2}(\Omega)} )$ such that
\begin{equation*}\label{estimate of L w_b in intro}
	\begin{aligned}
		&\Vert \partial_t \boldsymbol{\omega}_{{\mathrm{b}}}(t) \Vert_{L^{2}(\Omega)}
		\le 
		C \epsilon \ln (\epsilon^{-1} + 1),\\
		&\Vert \mathcal{L}^\epsilon\boldsymbol{\omega}_\mathrm{b}(t)
		\Vert_{L^2(\Omega)}
		\le 
		C \epsilon [\ln (\epsilon^{-1} + 1)]^2
		+
		C\Vert \e^\epsilon_{{\mathrm{b}}} (t) \Vert_{H^{1}(\Omega)},
	\end{aligned}
\end{equation*}
for any $t\in (0,T)$.
As for initial-boundary data of $\e^\epsilon_{{\mathrm{b}}}$, using Theorem \ref{thm: Estimates of initial-boundary data} we write with $C = C ( \Vert \nabla \m^\epsilon \Vert_{L^{2}(\Omega)} )$,
\begin{align*}\label{initial condition in intro}
	\Vert \e^\epsilon_{{\mathrm{b}}}(\bx,0) \Vert_{H^{1}(\Omega)}
	+
	\Vert \frac{\partial}
	{\partial \boldsymbol{\nu}^\epsilon} \e^\epsilon_{{\mathrm{b}}} \Vert_{W^{1,\infty}(0,T; B^{-1/2, 2} (\partial\Omega))}
	\le C \epsilon \ln (\epsilon^{-1} + 1).
\end{align*}

Now let us turn to stability analysis of \eqref{equation of e_b in intro}. For the $L^\infty(0,T;L^2(\Omega))$ norm, let $\sigma=1$ when $n =1,2$, and $\sigma=6/5$ when $n =3$, we can apply Theorem \ref{theorem:stability} to derive for $n=1,2,3$
	\begin{equation}\label{ineq of e^eps_b}
		\begin{aligned}
			&\Vert \e^\epsilon_{{\mathrm{b}}} \Vert^2_{L^\infty(0,T;L^2(\Omega))} 
			+ 
			 \Vert \nabla \e^\epsilon_{{\mathrm{b}}} \Vert^2_{L^2(0,T; L^{2} (\Omega))}\\
			\le  &
			C_{\delta} \Big( 
			\Vert \e^\epsilon_{{\mathrm{b}}}(\bx,0) \Vert_{L^{2}(\Omega)}^2
			+
			\Vert \frac{\partial}
			{\partial \boldsymbol{\nu}^\epsilon} \e^\epsilon_{{\mathrm{b}}} \Vert_{L^{2}(0,T; B^{-1/2, 2} (\partial\Omega))}^2
			+
			\Vert \widetilde{\boldsymbol{f}} \Vert^2_{L^2(0,T; L^{2} (\Omega))}\\
			&\qquad +
			\Vert \partial_t \boldsymbol{\omega}_{{\mathrm{b}}} \Vert^2_{L^2(0,T; L^{2} (\Omega))}
			+
			\gamma(\epsilon)
			\Vert \boldsymbol{f}_0 \Vert^2_{L^2(0,T; L^{\sigma} (\Omega))} \Big) \\
			& + 
			\delta\Vert \mathcal{L}^\epsilon \boldsymbol{\omega}_{{\mathrm{b}}}  \Vert^2_{L^2(0,T; L^{2} (\Omega))}
			+
			\epsilon^2 \Vert \mathcal{A}_\epsilon \e^\epsilon_{{\mathrm{b}}} \Vert_{L^2(0,T; L^{2} (\Omega))}^2.
		\end{aligned}
	\end{equation}
	with 
	\begin{equation*}
		\left\{\begin{aligned}
			&\gamma(\epsilon)=1,&
			&\text{when $n =1,3$},\\
			&\gamma(\epsilon)=[\ln (\epsilon^{-1} + 1)]^2, & &\text{when $n =2$}.
		\end{aligned}
		\right.
	\end{equation*}
	Constant $C_\delta = C_\delta(\Vert \nabla \m^\epsilon \Vert_{L^{4}(\Omega)})$. 
	Now taking $\delta$ small enough in \eqref{ineq of e^eps_b}, and using the fact
	\begin{equation*}
		\Vert \mathcal{A}_\epsilon \e^\epsilon_{{\mathrm{b}}} \Vert_{L^2(0,T; L^{2} (\Omega))}
		\le 
		C \ln (\epsilon^{-1} + 1)
	\end{equation*}
	with $C = C\big(\Vert \mathcal{A}_\epsilon \m^\epsilon \Vert_{L^{2} (\Omega)}\big)$ from Theorem \ref{theorem:omega}, we finally obtain
	\begin{equation*}\label{final estimate of e_b}
		\begin{aligned}
			&\Vert \e^\epsilon_{{\mathrm{b}}} \Vert_{L^\infty(0,T;L^2(\Omega))} 
			\le 
			\left\{\begin{aligned}
				&\beta(\epsilon), & &\text{when $\mu_0 > 0$, $n=2,3$},\\
				&C \epsilon[\ln (\epsilon^{-1} + 1)]^2, & &\text{when $\mu_0 = 0$, $n=1,2,3$},
			\end{aligned}
			\right.
		\end{aligned}
	\end{equation*}
where $\beta(\epsilon)$ satisfies \eqref{value of beta}.
Using the fact $\m^\epsilon -  \m_0 = \e^\epsilon_{{\mathrm{b}}} + \epsilon\m_1 + \epsilon^2 \m_2 + \boldsymbol{\omega}_\mathrm{b}$, along with the estimates of $\epsilon\m_1$, $\epsilon^2 \m_2$, $\boldsymbol{\omega}_\mathrm{b}$ in Lemma \ref{lemma: estimate of chi}-\ref{lemma: estimate of theta and gamma}, we obtain the $L^2$ estimates in Theorem \ref{thm:1}.

As for the stability of \eqref{equation of e_b in intro} in $L^\infty(0,T;H^1(\Omega))$  norm, we can 
apply Theorem \ref{theorem:stability 2} to obtain for $n=1,2,3$
\begin{align*}
	\Vert \nabla \e^\epsilon_{{\mathrm{b}}} \Vert^2_{L^\infty(0,T;L^2(\Omega))} 
	\le 
	C \Big( \Vert \e^\epsilon_{{\mathrm{b}}}(\bx,0) \Vert_{H^{1}(\Omega)}^2
	+
	\Vert \frac{\partial}
	{\partial \boldsymbol{\nu}^\epsilon} \e^\epsilon_{{\mathrm{b}}} \Vert_{H^{1}(0,T; B^{-1/2, 2} (\partial\Omega))}^2\quad \\
	+
	\Vert\mathcal{L}^\epsilon \boldsymbol{\omega}^\epsilon_{{\mathrm{b}}} \Vert^2_{ L^2(0,T; L^{2} (\Omega))}
	+
	\Vert \boldsymbol{f}^\epsilon \Vert^2_{L^2(0,T; L^{2} (\Omega))}
	+
	\Vert \partial_t \boldsymbol{\omega}_{{\mathrm{b}}} \Vert^2_{L^2(0,T; L^{2} (\Omega))}
	\Big),
\end{align*}
where constant $C = C( \Vert \mathcal{A}_\epsilon \m^\epsilon \Vert_{L^{2} (\Omega)}, \Vert \nabla \m^\epsilon \Vert_{L^{4}(\Omega)})$. Together with above results, and estimate for $\Vert \nabla \e^\epsilon_{{\mathrm{b}}} \Vert^2_{L^2(0,T; L^{2} (\Omega))}$ in \eqref{ineq of e^eps_b}, we arrive at
\begin{equation*}
	\begin{aligned}
		&\Vert \nabla \e^\epsilon_{{\mathrm{b}}} (t)\Vert^2_{L^\infty(0,T; L^{2}(\Omega))} 
		\le 
		\left\{\begin{aligned}
			&C \epsilon^{1/2}, & &\text{when $\mu_0 > 0$, $n=2,3$},\\
			&C \epsilon[\ln (\epsilon^{-1} + 1)]^2, & &\text{when $\mu_0 = 0$, $n=1,2,3$},
		\end{aligned}
		\right.
	\end{aligned}
\end{equation*}
	by the representation of $\e^\epsilon_{{\mathrm{b}}}$ in \eqref{rewrite e_b}, 
	together with estimate of $\m_2$ and $\boldsymbol{\omega}_\mathrm{M}$ in Lemma \ref{lemma: estimate of theta and gamma},
	it leads to the $H^1$ estimates in Theorem \ref{thm:1}.

Notice that all the constants in our estimate depend on the value of $\Vert \mathcal{A}_\epsilon \m^\epsilon(t) \Vert_{L^{2} (\Omega)}$ and $\Vert \nabla \m^\epsilon(t) \Vert_{L^{4}(\Omega)}$, which from Theorem \ref{regularety 3} are uniformly bounded with respect to $\epsilon$ and $t$ for any $t\in (0,T^*)$, with some $T^*\in(0, T]$. This completes the proof.
\end{proof}

\section{The Asymptotic Expansion}\label{expansion}
In this section, we derive the second-order corrector using the formal asymptotic expansion. First, let us define the first-order corrector $\m_1$ by
\begin{equation}\label{eqn:sln m1}
\m_1(\bx,\by) = \sum_{j= 1}^n \chi_j(\by)\frac{\partial}{\partial x_j}\m_0(\bx),
\end{equation}
where $\chi_j$, $j = 1,\dots,n$ are auxiliary functions satisfying cell problem
\begin{equation}\label{eqn:chi_j}
\left\{ \begin{aligned}
& \bdiv \big( a(\by)\nabla \chi_j(\by)\big) = - \sum_{i= 1}^n \frac{\partial}{\partial y_i} a_{ij}(\by),\\
& \chi_j \quad \mbox{$Y$-periodic},
\end{aligned} \right.
\end{equation}
such that the first geometric property in \eqref{geo property} holds. As for 
the second-order corrector $\m_2$, we assume it as a two-scale function satisfying
\begin{equation*}
\left\{ \begin{aligned}
&\m_2(\bx,\by) \mbox{ is defined for $\bx\in \Omega$ and $\by\in Y$},\\
&\m_2(\cdot, \by) \mbox{ is $Y$-periodic}.
\end{aligned} \right.
\end{equation*}

For notational convenience, given a two-scale function in the form of $\m(\bx, \frac{\bx}{\epsilon})$, we denote the fast variable $\by = \frac{\bx}{\epsilon}$ and have the following chain rule
\begin{equation}\label{notation of chain rule}
\nabla \m(\bx,\frac{\bx}{\epsilon}) = [(\nabla_{\bx} + \epsilon^{-1}\nabla_{\by}) \m](\bx,\by).
\end{equation}
Moreover, denoting $\mathcal{A}_\epsilon =  \bdiv ( a^\epsilon\nabla )$, one can rewrite
\begin{equation*}
\mathcal{A}_\epsilon \m(\bx,\frac{\bx}{\epsilon})
= [(\epsilon^{-2}\mathcal{A}_0 + \epsilon^{-1}\mathcal{A}_1 + \mathcal{A}_2) \m ] (\bx,\by),
\end{equation*}
where 
\begin{equation}\label{eqn:A0 A1 and A2}
\left\{ \begin{aligned}
& \mathcal{A}_0 = \bdiv_{\by} \big( a(\by)\nabla_{\by} \big),\\
& \mathcal{A}_1 = \bdiv_{\bx} \big( a(\by)\nabla_{\by} \big) 
+ \bdiv_{\by} \big( A(\by)\nabla_{\bx} \big),\\
& \mathcal{A}_2 = \bdiv_{\bx} \big( a(\by)\nabla_{\bx} \big).
\end{aligned} \right.
\end{equation}

The procedure to determine $\m_2$ is standard. With the notation in \eqref{def approximate sln}, assume $\m^\epsilon$ can be written in form of
\begin{equation}\label{eqn:formal expansion}
\m^\epsilon(\bx) = \widetilde{\m}^\epsilon(\bx) + o(\epsilon^2).
\end{equation}
One can derive $\m_2$ by substituting \eqref{eqn:formal expansion} into \eqref{eqn:LLG system} and comparing like terms of $\epsilon$. However, it is a bit fussy in the presence of stray field. Let us outline the main steps here. Revisiting the stray field $\hd[M^\epsilon \m^\epsilon(x)] = \nabla U^\epsilon$ in \eqref{define hd}-\eqref{eqn:stray field eqn}, one finds that the potential function $U^\epsilon = U^\epsilon[M^\epsilon \m^\epsilon(x)]$ satisfies
\begin{equation}\label{eqn: U^epsilon}
\begin{aligned}
\Delta U^\epsilon 
= &
- \bdiv (M_s(\frac{\bx}{\epsilon})\m^\epsilon\mathcal{X}_{\Omega}).
\end{aligned}
\end{equation}
Substituting $ U^\epsilon = \Sigma_{j =0}^2 \epsilon^jU_j(\bx,\frac{\bx}{\epsilon}) + o(\epsilon^2)$ and \eqref{eqn:formal expansion} into \eqref{eqn: U^epsilon} and combining like terms of $\epsilon$ leads to 
\begin{equation}\label{eqn:expansion of stray field}
\left\{ \begin{aligned}
&\mathrm{div}_{\by} (\nabla_{\by} U_0(\bx,\by)) = 0,\\
&\mathrm{div}_{\by} (\nabla_{\by} U_1(\bx,\by)) = - \bdiv_{\by} (M_s(y)\m_0(\bx)\mathcal{X}_{\Omega}(\bx)),\\
&\mathrm{div}_{\bx} (\nabla_{\bx} U_0(\bx,\by)) + 2\mathrm{div}_{\by} (\nabla_{\bx} U_1(\bx,\by))
+ \mathrm{div}_{\by} (\nabla_{\by} U_2(\bx,\by))\\
&\quad =- M_s(\by)\mathrm{div}_{\bx}\m_0(\bx)\mathcal{X}_{\Omega}(\bx) - \mathrm{div}_{\by} (M_s(\by) \m_1(\bx,\by)\mathcal{X}_{\Omega}(\bx)).
\end{aligned} \right.
\end{equation}
The first equation in \eqref{eqn:expansion of stray field} implies that $U_0(\bx,\by) = U_0(\bx)$ since the Lax-Milgram Theorem ensures the uniqueness and existence of solution (up to a constant). Integrating the third equation in \eqref{eqn:expansion of stray field} with respect to $\by$ yields
\begin{equation*}
\Delta U_0(\bx) = -
\mathrm{div}(M^0\m_0\mathcal{X}_{\Omega}).
\end{equation*}
An application of \eqref{define hd}-\eqref{eqn:stray field eqn} implies that $U_0$ is actually the potential function of $\hd[M^\mathrm{h} \m_0]$, i.e.,
\begin{equation}\label{eqn: relation between U0 and hd}
\nabla U_0(\bx) = 
\hd[M^\mathrm{h} \m_0]
=
M^\mathrm{h} \hd[\m_0].
\end{equation}
With notation given in \eqref{define H_d},
one can deduce from the second equation in \eqref{eqn:expansion of stray field} that $\m_0(\bx)\mathcal{X}_{\Omega}(\bx) \mathbf{U}(\by) = U_1(\bx,\by)$ up to a constant in the $H^1(Y)$ space. Hence it follows that by \eqref{define H_d}
\begin{equation}\label{eqn: relation between U1 and Hd}
\begin{aligned}
\nabla_{\by} U_1(\bx,\by) =& \mathcal{X}_{\Omega}(\bx) \m_0(\bx)\cdot \mathbf{H}_{\mathrm{d}}[M_s(\by)](\by).
\end{aligned}
\end{equation}
Substituting \eqref{eqn: relation between U0 and hd} and \eqref{eqn: relation between U1 and Hd} into the expansion of $U^\epsilon$, one can deduce that, for $\bx\in \Omega$,
\begin{equation}\label{eqn:stray field order 0}
\begin{aligned}
\hd[M^\epsilon \m^\epsilon]
=
\nabla U^\epsilon
=&
\hd[M^\mathrm{h} \m_0] 
+ \m_0(\bx) \cdot \mathbf{H}_{\mathrm{d}}[M_s(\by)](\frac{\bx}{\epsilon})
+ O(\epsilon).
\end{aligned}
\end{equation}

Substituting \eqref{eqn:formal expansion}, \eqref{eqn:sln m1}, \eqref{eqn:A0 A1 and A2}, \eqref{eqn:stray field order 0} into \eqref{eqn:LLG system} and collecting terms of $O(\epsilon^0)$, we obtain the following equations
\begin{equation}\label{eqn:system m2}
\left\{ \begin{aligned}
&\begin{gathered} \partial_t\m_0 - \alpha \m_0 \times \partial_t\m_0 = 
- (1+\alpha^2) \m_0 \times \left\{ \mathcal{A}_0 \m_2 + \mathcal{H}^{\mathrm{a}}_e  \right\},
\end{gathered}\\
& \m_2 \quad \mbox{$Y$-periodic in $\by$},
\end{aligned} \right.
\end{equation}
where
\begin{equation}\label{define Ha}
\begin{aligned}
\mathcal{H}^{\mathrm{a}}_e
= &
\mathcal{A}_1 \m_1 + \mathcal{A}_2 \m_0 - K^\epsilon(\m_0\cdot \bu)\bu \\
&+ 
\mu_0 M_s  \hd[M^\mathrm{h} \m_0] + \mu_0 M_s \m_0\cdot \mathbf{H}_{\mathrm{d}}[M_s(\by)] + M_s \ha.
\end{aligned}
\end{equation}
Substituting \eqref{eqn:homogenized LLG system} into \eqref{eqn:system m2} leads to
\begin{equation}\label{eqn:system m2 form2}
\left\{ \begin{aligned}
&\begin{gathered} 
\m_0 \times  \mathcal{A}_0 \m_2
= 
\m_0 \times \left\{  \mathcal{H}^0_e(\m_0) 
- \mathcal{H}^{\mathrm{a}}_e \right\},
\end{gathered}\\
& \m_2 \quad \mbox{$Y$-periodic in $\by$}.
\end{aligned} \right.
\end{equation}
\eqref{eqn:system m2 form2} is the degenerate system that determines $\m_2$ in terms of $\m_0$.

\subsection{Second-order corrector}
The well-posedness of \eqref{eqn:system m2 form2} is nontrivial due to the degeneracy. In the following Theorem, by searching a suitable solution satisfying \eqref{geo property}, we give the existence result, and derive an explicit expression of $\m_2$ in terms of $\m_0$ and some auxiliary functions.
\begin{theorem}\label{lemma m2}
	Given $\m_0 \in L^{\infty}\left([0,T];H^2(\Omega)\right)$ the homogenization solution and $\m_1$ calculated in \eqref{eqn:sln m1},  define 
	\begin{equation*}
	\mathcal{V} = \Big\{ \m\in H^2(Y)\cap H^1_{\mathrm{per}}(Y) :\ \m \cdot \m_0 =-\frac{1}{2} \abs{\m_1}^2 \quad \mbox{a.e. in $\Omega\times Y$} \Big\},
	\end{equation*}
	then \eqref{eqn:system m2 form2} admits a unique solution $\m_2(\bx, \by) \in \mathcal{V}/T_{\m_0}(S^2)$, with notation $T_{\m_0}(S^2)$ denoting the tangent space of $\m_0$.
\end{theorem}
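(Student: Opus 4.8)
The degeneracy in \eqref{eqn:system m2 form2} is the only genuine difficulty, and my plan is to remove it by a short algebraic observation. Write $\boldsymbol{g}(\bx,\by) := \mathcal{H}^0_e(\m_0) - \mathcal{H}^{\mathrm{a}}_e$ for the right-hand side, so that \eqref{eqn:system m2 form2} reads $\m_0\times(\mathcal{A}_0\m_2 - \boldsymbol{g}) = 0$ with $\m_2$ $Y$-periodic. Since $\abs{\m_0}=1$, the kernel of $\boldsymbol{v}\mapsto\m_0\times\boldsymbol{v}$ is $\mathrm{span}\{\m_0\}$, hence the system is equivalent to
\begin{equation*}
  \mathcal{A}_0\m_2 = \boldsymbol{g} + \lambda\,\m_0, \qquad \m_2 \ \text{$Y$-periodic},
\end{equation*}
for some scalar $\lambda = \lambda(\bx,\by)$. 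I would first show $\int_Y\boldsymbol{g}\,\d\by = 0$, so that this cell problem is solvable already with $\lambda\equiv 0$; and then exploit the residual gauge freedom $\m_2\mapsto\m_2 + t(\bx,\by)\m_0$ — legitimate because $\m_0$ is $\by$-independent, so $\mathcal{A}_0(t\m_0) = (\mathcal{A}_0 t)\m_0\in\mathrm{span}\{\m_0\}$ and the system is unaffected — to enforce the geometric constraint $\m_2\cdot\m_0 = -\tfrac12\abs{\m_1}^2$ defining $\mathcal{V}$.

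The core of the argument is the identity
\begin{equation*}
  \int_Y\mathcal{H}^{\mathrm{a}}_e(\bx,\by)\,\d\by = \mathcal{H}^0_e(\m_0)(\bx),
\end{equation*}
which gives $\int_Y\boldsymbol{g}\,\d\by = 0$ at once. I would verify it term by term against \eqref{homogenized eff field}: the $\by$-average of $\mathcal{A}_1\m_1 + \mathcal{A}_2\m_0$ equals $\bdiv(a^0\nabla\m_0)$, which is the classical homogenization computation using the corrector equation \eqref{eqn:chi_j} and the formula for $a^0$; $\int_Y K(\by)\,\d\by = K^0$ and $\int_Y M_s(\by)\,\d\by = M^0$, together with linearity of $\hd$ and $M^{\mathrm{h}} = M^0$, dispose of the anisotropy, Zeeman and $\mu_0 M_s\hd[M^{\mathrm{h}}\m_0]$ terms, yielding $-K^0(\m_0\cdot\bu)\bu + M^0\ha + \mu_0(M^0)^2\hd[\m_0]$; and $\int_Y M_s(\by)\mathbf{H}_{\mathrm{d}}[M_s(\by)](\by)\,\d\by = \mathbf{H}^0_{\mathrm{d}}$ by definition absorbs the last term $\mu_0 M_s\,\m_0\cdot\mathbf{H}_{\mathrm{d}}[M_s(\by)](\by)$. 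Summing reproduces exactly $\mathcal{H}^0_e(\m_0)$.

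Granting $\int_Y\boldsymbol{g}\,\d\by = 0$, the periodic cell problem $\mathcal{A}_0\boldsymbol{v} = \boldsymbol{g}$, $\boldsymbol{v}(\bx,\cdot)$ $Y$-periodic with $\int_Y\boldsymbol{v}\,\d\by = 0$, has a unique solution by Lax--Milgram / the Fredholm alternative, and Assumption \ref{assumption} with elliptic regularity on the torus give $\boldsymbol{v}(\bx,\cdot)\in H^2(Y)$; this provides the explicit representation of $\m_2$ in terms of $\m_0$, $\m_1$ and auxiliary periodic functions announced in the statement. Setting
\begin{equation*}
  \m_2 := \boldsymbol{v} + t\,\m_0, \qquad t := -\tfrac12\abs{\m_1}^2 - \boldsymbol{v}\cdot\m_0,
\end{equation*}
one gets $\m_2\cdot\m_0 = -\tfrac12\abs{\m_1}^2$, i.e. $\m_2\in\mathcal{V}$, and $\m_2$ solves \eqref{eqn:system m2 form2}. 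For uniqueness modulo $T_{\m_0}(S^2)$: if $\m_2,\m_2'\in\mathcal{V}$ both solve \eqref{eqn:system m2 form2}, then $\boldsymbol{z}:=\m_2-\m_2'$ satisfies $\m_0\times\mathcal{A}_0\boldsymbol{z}=0$ and $\boldsymbol{z}\cdot\m_0=0$; the first forces $\mathcal{A}_0\boldsymbol{z}=\mu\,\m_0$, and then $\mu=\abs{\m_0}^2\mu=(\mathcal{A}_0\boldsymbol{z})\cdot\m_0=\mathcal{A}_0(\boldsymbol{z}\cdot\m_0)=0$, so $\mathcal{A}_0\boldsymbol{z}=0$ with $\boldsymbol{z}$ $Y$-periodic, whence $\boldsymbol{z}$ is $\by$-independent and $\boldsymbol{z}\cdot\m_0=0$ means $\boldsymbol{z}(\bx)\in T_{\m_0(\bx)}(S^2)$.

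The step I expect to be the main obstacle is the identity $\int_Y\mathcal{H}^{\mathrm{a}}_e\,\d\by = \mathcal{H}^0_e(\m_0)$, and within it the stray-field contributions: one must combine the $\by$-expansion of the microscopic potential, i.e. the relations \eqref{eqn: relation between U0 and hd}--\eqref{eqn: relation between U1 and Hd}, with the definition \eqref{define H_d} of $\mathbf{H}_{\mathrm{d}}$ and of $\mathbf{H}^0_{\mathrm{d}}$, to check that the cell-average of $\mu_0 M_s\big(\hd[M^{\mathrm{h}}\m_0] + \m_0\cdot\mathbf{H}_{\mathrm{d}}[M_s(\by)](\by)\big)$ is precisely the two stray-field terms of \eqref{homogenized eff field}; the exchange part $\mathcal{A}_1\m_1 + \mathcal{A}_2\m_0$ is the standard but slightly delicate manipulation with \eqref{eqn:chi_j}. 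Once that identity is secured, the reduction of the degenerate system, the construction of $\m_2$ via the gauge $t$, and the uniqueness argument are all routine linear elliptic theory on $Y$.
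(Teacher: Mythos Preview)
Your proof is correct and rests on the same key ingredient as the paper's: the compatibility identity $\int_Y(\mathcal{H}^0_e(\m_0)-\mathcal{H}^{\mathrm{a}}_e)\,\d\by=0$, which the paper states as \eqref{H0 - Ha} without the term-by-term verification you outline. The organization differs slightly. The paper first \emph{imposes} the constraint $\m_0\cdot\m_2=-\tfrac12|\m_1|^2$, differentiates it to obtain $\m_0\cdot\mathcal{A}_0\m_2$, and combines this with $\m_0\times(\mathcal{A}_0\m_2-\boldsymbol{g})=0$ (via a cross product with $\m_0$) to arrive at the specific non-degenerate equation \eqref{eqn:system m2 form3}; solvability then requires both \eqref{H0 - Ha} and the integration-by-parts identity $\int_Y(\m_1\cdot\mathcal{A}_0\m_1+a\nabla_{\by}\m_1\cdot\nabla_{\by}\m_1)\,\d\by=0$, and the $\by$-independent part is adjusted at the end. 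You instead set $\lambda\equiv 0$, solve $\mathcal{A}_0\boldsymbol{v}=\boldsymbol{g}$ (needing only \eqref{H0 - Ha}), and then gauge by $t\m_0$ into $\mathcal{V}$. Your route is arguably cleaner for existence and gives a more explicit uniqueness argument than the paper; the paper's route has the advantage that \eqref{eqn:system m2 form3} leads directly to the explicit formula \eqref{eqn:sln m2} for $\m_2$ in the subsequent Remark, which your abstract $\boldsymbol{v}$ does not immediately provide.
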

\begin{proof}
	Assume $\m_2(\bx, \by) \in \mathcal{V}$, i.e., $\m_2 \cdot \m_0 =-\frac{1}{2} \abs{\m_1}^2$. Applying $\mathcal{A}_0$ to both sides of it yields
	\begin{equation}\label{eqn:substituting of m2}
	\m_0\cdot \mathcal{A}_0 \m_2
	= - a(\by) \nabla_{\by}\m_1 \cdot \nabla_y \m_1 - \m_1\cdot \mathcal{A}_0 \m_1.
	\end{equation}
	Taking the cross-product with $\m_0$ to \eqref{eqn:system m2 form2} and substituting \eqref{eqn:substituting of m2} lead to 
	\begin{equation}\label{eqn:system m2 form3}
	\begin{aligned} 
	\mathcal{A}_0 \m_2 =& - \{ \mathcal{H}^{\mathrm{a}}_e - \mathcal{H}^0_e(\m_0) \}
	+
	\big\{ \m_0 \cdot \big( \mathcal{H}^{\mathrm{a}}_e - \mathcal{H}^0_e(\m_0)\big) \big\} \m_0\\
	& - ( \m_1\cdot \mathcal{A}_0 \m_1 
	+ a(\by) \nabla_{\by}\m_1 \cdot \nabla_{\by} \m_1 ) \m_0.
	\end{aligned}
	\end{equation}
	Now using the fact
	\begin{equation}\label{H0 - Ha}
	\int_Y \mathcal{H}^0_e(\m_0) 
	- \mathcal{H}^{\mathrm{a}}_e \d \by = 0,
	\end{equation}
	together with \eqref{eqn:substituting of m2}, one can check equation \eqref{eqn:system m2 form3} satisfies the compatibility condition for $Y$-periodic function $\m_2$ in $\by$. Thus by the application of Lax-Milgram Theorem and smoothness assumption, \eqref{eqn:system m2 form3} admits a unique regular solution up to a function independent of $\by$, denoted by $\m_2(\bx,\by) + \widetilde{\m}_2(\bx)$. Moreover, one can determine $\widetilde{\m}_2(\bx)$  by
	\begin{equation*}
		\m_0\cdot (\m_2 + \widetilde{\m}_2) = -\frac{1}{2} \abs{\m_1}^2,
	\end{equation*}
	such that $\m_2 + \widetilde{\m}_2\in \mathcal{V}$, and therefore is also a solution to \eqref{eqn:system m2 form2} by taking the above transformation inversely. 

%
\end{proof}
\begin{remark}
	One can check that equation \eqref{eqn:system m2 form3} has a solution
	\begin{equation}\label{eqn:sln m2}
		\m_2 = \sum_{i,j= 1}^n \theta_{ij} 
		\frac{\partial^2 \m_0}{\partial x_i \partial x_j} 
		+ \sum_{i,j= 1}^n (\theta_{ij} + \small\frac{1}{2}\chi_i\chi_j) \left(\frac{\partial \m_0}{\partial x_i} \cdot \frac{\partial \m_0}{\partial x_j} \right) \m_0
		+ \mathcal{T}_{\mathrm{low}}
		- (\m_0\cdot \mathcal{T}_{\mathrm{low}})\m_0
	\end{equation}
with low-order terms $\mathcal{T}_{\mathrm{low}}$ calculated by
\begin{equation*}
	\mathcal{T}_{\mathrm{low}} = 
	- \kappa (\m_0\cdot \bu)\bu
	+
	\mu_0 \rho \hd[M^\mathrm{h} \m_0] 
	+ \mu_0 \m_0\cdot \boldsymbol{\Lambda}
	+ M_s \ha,
\end{equation*}
	where $\theta_{ij}$ and $\kappa$, $\rho$, $\boldsymbol{\Lambda}$ are given by 
	\begin{equation}\label{eqn:theta kappa and gamma}
		\left\{ \begin{aligned}
			&\mathcal{A}_0\theta_{ij} = 
			a^0_{ij} - \big(a_{ij} + \sum_{k=1}^n a_{ik}\frac{\partial\chi_j}{\partial y_k}\big)
			- \sum_{k=1}^n \frac{\partial (a_{ik}\chi_j)}{\partial y_k}
			,\\
			&\mathcal{A}_0 \rho  = M_s (\by)-M^0,\quad
			\mathcal{A}_0\kappa = K(\by)-K^0,\\
			&\mathcal{A}_0 \boldsymbol{\Lambda} =
			M_s(\by) \mathbf{H}_{\mathrm{d}}[M_s(\by)](\by) - \mathbf{H}_{\mathrm{d}}^0,\\
			&\theta_{ij},\  \kappa,\ \rho, \ \boldsymbol{\Lambda}, \quad \mbox{are $Y$-periodic}.
		\end{aligned} \right.
	\end{equation}
	Moreover, one can find $\m_2$ defined above satisfies geometric property \eqref{geo property}, therefore is also the solution to equation \eqref{eqn:system m2 form2}. In the following, we may assume second-order correct $\m_2$ takes the form in \eqref{eqn:sln m2}.
\end{remark}

\section{Consistency Estimate}\label{consistency}
In this section, we aim to estimate the consistence error $\boldsymbol{f}^\epsilon$ defined in \eqref{eqn:equivalent system of m epsilon}.
Following the notation in \eqref{notation of chain rule}-\eqref{eqn:A0 A1 and A2}, by the definition of $\widetilde{\m}^\epsilon$, \eqref{eqn:equivalent system of m epsilon} can be written in terms of 
\begin{equation*}
\boldsymbol{f}^\epsilon = 
\epsilon^{-2}\boldsymbol{f}_{-2} + \epsilon^{-1}\boldsymbol{f}_{-1} + \boldsymbol{f}_{0} + \epsilon \boldsymbol{f}_{1} + \epsilon^{2}\boldsymbol{f}_{2}.
\end{equation*}
It is easy to check that $\boldsymbol{f}_{-2} = \boldsymbol{f}_{-1} = \boldsymbol{0}$ by the definition of $\m_0$, $\m_1$ in Section \ref{expansion}. Along the same line, by the H\"{o}lder's inequality, one has 
\begin{equation*}
\Vert \boldsymbol{f}_{1} \Vert_{L^{2}(\Omega)} + \Vert \boldsymbol{f}_{2} \Vert_{L^{2}(\Omega)}
\le C,
\end{equation*} 
where $C$ depends on the $L^{2}(\Omega)$ and $L^{\infty}(\Omega)$ norms of $ \m_i(\bx, \frac{\bx}{\epsilon})$, $\nabla_{\bx}\m_i(\bx, \frac{\bx}{\epsilon})$, $\nabla_{\by}\m_i(\bx, \frac{\bx}{\epsilon})$, $i = 0,1,2$, and thus is bounded from above by $\Vert \nabla \m_0 \Vert_{H^{4}(\Omega)}$
with the help of smoothness assumption and Sobolev inequality.

It remains to estimate $\boldsymbol{f}_0$, let us prove that $\boldsymbol{f}_0$ only depends on the consistence error of stray field, by the help of geometric property \eqref{geo property}.
Denote the consistence error of stray field by
\begin{equation}\label{define tilde h}
\begin{aligned} 
&\widetilde{\mathbf{h}}
=
\mu M^\epsilon  \mathbf{h}_\mathrm{d}[(M^\epsilon - M^\mathrm{h}) \m_0]
- \mu M^\epsilon   \mathbf{H}_{\mathrm{d}}[M_s(\by)](\frac{\bx}{\epsilon})\cdot \m_0
\end{aligned}
\end{equation}
with $\mathbf{H}_{\mathrm{d}}$ given in \eqref{define H_d}. After some algebraic calculations and the usage of \eqref{eqn:system m2} and \eqref{define Ha}, one has
\begin{equation}\label{eqn:value of f0}
\begin{aligned}
\boldsymbol{f}_{0} =& \partial_t \m_0 - \alpha \left\{ \mathcal{A}_0 \m_2 + \mathcal{H}^{\mathrm{a}}_e + \widetilde{\mathbf{h}} \right\} 
+ \m_0 \times \left\{ \mathcal{A}_0 \m_2 + \mathcal{H}^{\mathrm{a}}_e + \widetilde{\mathbf{h}} \right\}  \\
&- \alpha  g_l^\epsilon[\m_0]\m_0 - (a^\epsilon \nabla_y\m_1 \cdot \nabla_y\m_1)\m_0 - 2(a^\epsilon \nabla_y\m_1 \cdot \nabla\m_0)\m_0.
\end{aligned}
\end{equation}
Notice that the classical solution $\m_0$ to \eqref{eqn:homogenized LLG system} also satisfies the equivalent form
\begin{gather}\label{eqn:equivalent system of m0}
\partial_t\m_0 - \alpha \mathcal{H}^0_e(\m_0) 
+ \m_0 \times \mathcal{H}^0_e(\m_0)
- \alpha
g_l^0[\m_0] \m_0
= 0,
\end{gather}
where 
\begin{equation*}
\begin{aligned}
g_l^0[\m] :=&  a^0 \vert \nabla \m\vert^2 
+ 
K^0 \left( \m\cdot \bu \right)\bu 
-
\mu_0  (M^0)^2 \hd [\m]\cdot \m\\
& -
\mu_0
\m\cdot \mathbf{H}^0_{\mathrm{d}} \cdot \m
-
\ha\cdot M^0 \m .
\end{aligned}
\end{equation*}
Substituting \eqref{eqn:equivalent system of m0} into \eqref{eqn:value of f0} and using \eqref{eqn:system m2 form3} lead to
\begin{align}
\boldsymbol{f}_{0} =& 
- \alpha \widetilde{\mathbf{h}}
+ 
\m_0 \times \widetilde{\mathbf{h}} 
-\alpha\big\{ \m_0 \cdot \big( \mathcal{H}^{\mathrm{a}}_e - \mathcal{H}^0_e(\m_0)\big) \big\} \m_0 \nn\\
& + \alpha( \m_1\cdot \mathcal{A}_0 \m_1 
- 2a^\epsilon \nabla_y\m_1 \cdot \nabla\m_0 ) \m_0
+ 
\alpha g_l^0[\m_0] \m_0  - \alpha  g_l^\epsilon[\m_0]\m_0.\label{eqn:value of f0 form1}
\end{align}
Note that $\mathcal{A}_2 \m_0 = \mathcal{A}_\epsilon \m_0- \epsilon^{-1} \mathcal{A}_1 \m_0$,
one can deduce
\begin{equation*}
\begin{aligned} 
&\mathcal{H}^{\mathrm{a}}_e  
= 
\mathcal{H}^\epsilon_e(\m_0)
+
\mathcal{A}_1 \m_1 +
\epsilon^{-1} \mathcal{A}_0 \m_1
-
\widetilde{\mathbf{h}}.
\end{aligned}
\end{equation*}
Substituting it into \eqref{eqn:value of f0 form1}, and using the fact
\begin{equation*}
\m_0\cdot \mathcal{H}^\epsilon_e (\m_0) = - g_l^\epsilon[\m^0 ],\quad
\m_0\cdot \mathcal{H}^0_e(\m_0) = -g_l^0[\m^0 ],
\end{equation*}
one has
\begin{equation}\label{eqn:value of f0 form3}
\begin{aligned} 
\boldsymbol{f}_{0} =& 
- \alpha \widetilde{\mathbf{h}}
+ 
\m_0 \times \widetilde{\mathbf{h}} 
-\alpha\big\{ \m_0 \cdot \big( \mathcal{A}_1 \m_1 +
\epsilon^{-1} \mathcal{A}_0 \m_1
-
\widetilde{\mathbf{h}}\big) \big\} \m_0\\
& + \alpha( \m_1\cdot \mathcal{A}_0 \m_1 
- 2a^\epsilon \nabla_y\m_1 \cdot \nabla\m_0 ) \m_0.
\end{aligned}
\end{equation}
Apply $\mathcal{A}_0$ and $\mathcal{A}_1$ to both sides of $\m_0 \cdot\m_1 = 0$ respectively, and substitute resulting equations into \eqref{eqn:value of f0 form3}. After simplification, we finally obtain
\begin{equation}\label{eqn:value of f0 form2}
\begin{aligned} 
\boldsymbol{f}_{0} =& 
- \alpha \widetilde{\mathbf{h}}
+ 
\m_0 \times \widetilde{\mathbf{h}} 
+
\alpha\left( \m_0 \cdot 
\widetilde{\mathbf{h}} \right) \m_0.
\end{aligned}
\end{equation}

\eqref{eqn:value of f0 form2} implies that the convergence of $\boldsymbol{f}^\epsilon$ depends on the convergence of stray field error $\widetilde{\mathbf{h}}$. In fact, we have 
\begin{lemma}\label{consistency of stray field first part}
	For any $1\le r< \infty$, and $n = 2,3$, it holds that
	\begin{equation}\label{estimate of stray field}
	\begin{aligned} 
	\big\Vert M^\epsilon  \hd[(M^\epsilon -M^\mathrm{h}) \m_0] - M^\epsilon   \mathbf{H}_{\mathrm{d}}[M_s(\by)]&(\frac{\bx}{\epsilon})\cdot \m_0
	\big\Vert_{L^{r}(\Omega)}\\
	&\le 
	C 
	\epsilon^{1/r}
	+
	C
	\epsilon \ln (\epsilon^{-1} + 1),
	\end{aligned}
	\end{equation}
	where $C$ depends on $\Vert \nabla \m_0\Vert_{W^{1,\infty}(\Omega)}$, $\Vert \nabla M_s(\by) \Vert_{H^{1}(Y)}$ and is independent of $\epsilon$.
\end{lemma}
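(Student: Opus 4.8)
The plan is to rewrite the stray field $\hd[(M^\epsilon - M^{\mathrm h})\m_0]$ via the Newtonian potential and compare it termwise with its formal two-scale limit $\mathbf{H}_{\mathrm d}[M_s(\by)](\bx/\epsilon)\cdot\m_0$. Write $b(\by)=M_s(\by)-M^0$, a smooth $Y$-periodic function with $\int_Y b=0$, so that $(M^\epsilon-M^{\mathrm h})\m_0 = b(\bx/\epsilon)\m_0(\bx)$. Since $\int_Y b=0$, there is a smooth $Y$-periodic vector field $\mathbf{B}(\by)$ with $b=\bdiv_{\by}\mathbf{B}$; more to the point, the cell corrector $\mathbf{U}$ in \eqref{define H_d} satisfies $\Delta_{\by}\mathbf{U}=-\bdiv_{\by}(M_s\,\mathrm{Id})=-\nabla_{\by}b$, so $\mathbf{H}_{\mathrm d}[M_s(\by)](\by)=\nabla_{\by}\mathbf{U}(\by)$ is exactly the periodic object against which we must test. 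First I would use \eqref{define hd}–\eqref{eqn:stray field eqn} together with the Green's representation from \cite{Praetorius2004AnalysisOT} to express $\hd[b^\epsilon\m_0]=\nabla U$, where $U(\bx)=\int_\Omega \nabla_{\bz}\Gamma(\bx-\bz)\cdot b(\bz/\epsilon)\m_0(\bz)\,\d\bz$ with $\Gamma$ the Newtonian kernel on $\mathbb R^3$ (or the $n$-dimensional analogue). Then the main term becomes a singular integral acting on the oscillatory, $\bx$-modulated density $b(\bz/\epsilon)\m_0(\bz)$.

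The core of the argument is the estimate on this singular integral. I would split the density as $b(\bz/\epsilon)\m_0(\bz) = b(\bz/\epsilon)\m_0(\bx) + b(\bz/\epsilon)(\m_0(\bz)-\m_0(\bx))$. For the second piece, $\abs{\m_0(\bz)-\m_0(\bx)}\le \Vert\nabla\m_0\Vert_{L^\infty}\abs{\bz-\bx}$ kills one order of the kernel singularity, and an $\epsilon$-periodic-average argument (or simply bounding $b$ in $L^\infty$ and integrating the weakly singular kernel) yields a contribution of size $\mathcal O(\epsilon)$ in $L^\infty(\Omega)$, hence of size $C\epsilon$ in every $L^r(\Omega)$. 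For the first piece, $b(\bz/\epsilon)\m_0(\bx)$, the singular integral $\int_\Omega \nabla^2\Gamma(\bx-\bz)\,b(\bz/\epsilon)\,\d\bz$ is precisely what reproduces the cell quantity $\mathbf{H}_{\mathrm d}[M_s(\by)](\bx/\epsilon)$ in the interior, up to a boundary-layer defect: extending $b^\epsilon$ periodically to all of $\mathbb R^n$ and using the periodic Green's function for $-\Delta$ on $\epsilon Y$ identifies the full-space convolution with $\nabla_{\by}\mathbf{U}(\bx/\epsilon)$ exactly, and the difference between integrating over $\Omega$ and over $\mathbb R^n$ is supported (morally) in an $\epsilon$-neighborhood of $\partial\Omega$ where the kernel is still integrable. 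Here the logarithm enters: in a tubular neighborhood of width $\sim\epsilon$ near $\partial\Omega$ the second-order kernel $\nabla^2\Gamma$ is only borderline integrable, so the pointwise bound degrades like $\ln(\epsilon^{-1})$ near the boundary while the $L^r$ norm of that thin layer carries the extra $\epsilon^{1/r}$ from its measure $\sim\epsilon$. Combining, the first piece contributes $C\epsilon\ln(\epsilon^{-1}+1)$ away from the boundary and $C\epsilon^{1/r}$ from the boundary layer, which is exactly the right-hand side of \eqref{estimate of stray field}. The final multiplication by $M^\epsilon$ is harmless since $\Vert M^\epsilon\Vert_{L^\infty}\le C$.

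The step I expect to be the main obstacle is making rigorous the claim that the full-space periodic convolution of $\nabla^2\Gamma$ against $b^\epsilon$ equals $\nabla_{\by}\mathbf{U}(\cdot/\epsilon)$, and then controlling the boundary-layer error with the correct two-scale weight. Concretely, one must pass from the distributional identity $\Delta_{\by}\mathbf{U}=-\nabla_{\by}b$ to a pointwise singular-integral representation of $\nabla_{\by}\mathbf{U}$ (principal-value plus the solid-angle/"depolarization" constant), then rescale by $\epsilon$ and carefully track that the truncation to $\Omega$ rather than $\mathbb R^n$ produces an error controlled by $\int_{\{\mathrm{dist}(\bx,\partial\Omega)\le C\epsilon\}} \abs{\nabla^2\Gamma}$-type integrals. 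I would handle this by: (i) fixing an interior region $\Omega_\epsilon=\{\bx:\mathrm{dist}(\bx,\partial\Omega)>\epsilon^{1/2}\}$ say, where standard periodic-homogenization singular-integral estimates (disturbed-function estimates, as flagged in the introduction) give the $C\epsilon\ln(\epsilon^{-1})$ bound; (ii) on the complementary boundary layer of measure $\mathcal O(\epsilon^{1/2})$ — or, with more care, $\mathcal O(\epsilon)$ after exploiting cancellation of the zero-mean density — bounding everything crudely in $L^\infty$ times the layer measure to the power $1/r$; and (iii) optimizing the split to land on $\epsilon^{1/r}+\epsilon\ln(\epsilon^{-1}+1)$. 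The cancellation $\int_Y b=0$ is what upgrades naive $\mathcal O(\epsilon^{1/r}\cdot 1)$ estimates to ones with the sharp logarithm, and this is where the smoothness hypotheses on $M_s$ and $\m_0$ (namely $\Vert\nabla\m_0\Vert_{W^{1,\infty}}$ and $\Vert\nabla M_s\Vert_{H^1(Y)}$) get used — through one integration by parts in the fast variable to expose an extra factor of $\epsilon$.
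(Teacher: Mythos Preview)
Your overall architecture---Newtonian-potential representation, split off a boundary layer, exploit the zero mean of $b=M_s-M^0$ via an integration by parts in the fast variable---matches the paper. But two points deserve correction.

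First, a genuine error in the rate you claim for the ``second piece'' $b(\bz/\epsilon)(\m_0(\bz)-\m_0(\bx))$. Using $|\m_0(\bz)-\m_0(\bx)|\le C|\bx-\bz|$ reduces the kernel to $|\bx-\bz|^{-(n-1)}$, which is integrable but gives an $O(1)$ bound, not $O(\epsilon)$; your parenthetical ``or simply bounding $b$ in $L^\infty$ and integrating the weakly singular kernel'' does not produce the stated rate. You must carry out the integration by parts you mention at the end: write $b(\bz/\epsilon)=\epsilon^2\Delta_{\bz}\widetilde U(\bz/\epsilon)$ (equivalently $=\nabla_{\bz}\cdot\{\epsilon\nabla_{\by}\widetilde U(\bz/\epsilon)-\epsilon\nabla_{\by}\widetilde U(\bx/\epsilon)\}$), shift one derivative onto the kernel, and only then apply the singular-integral lemma for disturbed functions. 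This yields $C\epsilon\ln(\epsilon^{-1}+1)$ in $L^\infty$, not $C\epsilon$, and it is precisely what the paper does to the corresponding term $\mathcal{P}^\epsilon-\mathcal{Q}^\epsilon$.

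Second, your handling of the ``first piece'' takes a harder route than the paper, and you correctly flag it as the main obstacle. You want to identify the full-space convolution of $\nabla^2\Gamma$ against the periodic extension of $b^\epsilon$ with $\mathbf{H}_{\mathrm d}(\bx/\epsilon)$ and then control the defect from truncating $\mathbb{R}^n$ to $\Omega$; but that full-space integral is not absolutely convergent, so this needs principal values and careful limits. The paper sidesteps the issue: it keeps everything over $\Omega$ by writing $\mathbf{H}_{\mathrm d}(\bx/\epsilon)=\epsilon^2\nabla^2\widetilde U(\bx/\epsilon)$, cuts off with $\eta^\epsilon$, and applies Green's representation to the compactly supported smooth function $\epsilon^2\eta^\epsilon\widetilde U(\cdot/\epsilon)$ to express it as a Newtonian potential over $\Omega$. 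After taking two derivatives, $\m_0\cdot\mathbf{H}_{\mathrm d}$ becomes $\mathcal{Q}^\epsilon+\widetilde{\mathcal{Q}}^\epsilon$, with $\widetilde{\mathcal{Q}}^\epsilon$ a boundary-layer Newtonian potential handled by Lemma~\ref{Newtonian on the boundary layer}. The payoff is that $\mathcal{P}^\epsilon-\mathcal{Q}^\epsilon$ \emph{already} carries the factor $m_{0,i}(\bx)-m_{0,i}(\bz)$, so it coincides with your ``second piece'' and is dispatched by the same integration by parts. In other words the paper merges your two pieces into one by choosing the right representation of $\mathbf{H}_{\mathrm d}$, avoiding the full-space extension entirely; the $\epsilon^{1/r}$ contribution then comes solely from the explicit boundary-layer remainders $\widetilde{\mathcal{P}}^\epsilon$ and $\widetilde{\mathcal{Q}}^\epsilon$.
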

Proof of Lemma \ref{consistency of stray field first part} will be given in Section \ref{sec:Consistency of stray field}.  Lemma \ref{consistency of stray field first part} directly leads to the consistency error:
\begin{theorem}\label{lemma:consistency m2}(Consistency)	
	Given $\boldsymbol{f}^\epsilon$ defined in \eqref{eqn:equivalent system of m epsilon},
	it can be divided as $\boldsymbol{f}^\epsilon = \boldsymbol{f}_0 + \widetilde{\boldsymbol{f}}$, satisfying $\Vert \widetilde{\boldsymbol{f}} \Vert_{L^{2}(\Omega)}
	\le C \epsilon$, and
	\begin{equation}\label{eqn:consistency m2}
		\begin{aligned}
			&\Vert \boldsymbol{f}_0 \Vert_{L^{2}(\Omega)}
			= 0,\quad \text{when $\mu_0 = 0$},\\
			&\Vert \boldsymbol{f}_0 \Vert_{L^{r}(\Omega)}
			\le C_r \mu_0
			\big(\epsilon^{1/r}
			+
			\epsilon \ln (\epsilon^{-1} + 1)\big),\quad \text{when $\mu_0 > 0$, $n\neq1$},
		\end{aligned}
	\end{equation}
	for any $1\le r < +\infty$.
Here constant $C$ and $C_r$ depend on $\Vert \nabla \m_0\Vert_{H^4(\Omega)}$, $\Vert \nabla M_s(\by) \Vert_{H^{1}(Y)}$, and are independent of $\epsilon$.
\end{theorem}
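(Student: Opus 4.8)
The plan is to reduce everything to the closed-form identity \eqref{eqn:value of f0 form2} for $\boldsymbol{f}_0$ together with Lemma \ref{consistency of stray field first part}; once these two are in hand the proof is essentially an exercise in counting powers of $\epsilon$ and tracking constants. First I would recall from Section \ref{consistency} the expansion $\boldsymbol{f}^\epsilon = \epsilon^{-2}\boldsymbol{f}_{-2} + \epsilon^{-1}\boldsymbol{f}_{-1} + \boldsymbol{f}_0 + \epsilon\boldsymbol{f}_1 + \epsilon^2\boldsymbol{f}_2$, in which $\boldsymbol{f}_{-2} = \boldsymbol{f}_{-1} = \boldsymbol{0}$ by the choice of $\m_0$ and $\m_1$. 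Then I would take $\widetilde{\boldsymbol{f}} := \boldsymbol{f}^\epsilon - \boldsymbol{f}_0 = \epsilon\boldsymbol{f}_1 + \epsilon^2\boldsymbol{f}_2$ and invoke the bound $\Vert\boldsymbol{f}_1\Vert_{L^2(\Omega)} + \Vert\boldsymbol{f}_2\Vert_{L^2(\Omega)}\le C$ already obtained in Section \ref{consistency}, in which $C$ is controlled by the $L^2$ and $L^\infty$ norms of the $\m_i$ and of their $\bx$- and $\by$-gradients, hence by $\Vert\nabla\m_0\Vert_{H^4(\Omega)}$ through the smoothness of the cell functions and the embedding $H^4(\Omega)\hookrightarrow W^{1,\infty}(\Omega)$ valid for $n\le 3$. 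This immediately yields $\Vert\widetilde{\boldsymbol{f}}\Vert_{L^2(\Omega)}\le C\epsilon$.

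For $\boldsymbol{f}_0$ I would simply quote the identity \eqref{eqn:value of f0 form2}, namely $\boldsymbol{f}_0 = -\alpha\widetilde{\mathbf{h}} + \m_0\times\widetilde{\mathbf{h}} + \alpha(\m_0\cdot\widetilde{\mathbf{h}})\m_0$ with $\widetilde{\mathbf{h}}$ as in \eqref{define tilde h}, whose derivation from \eqref{eqn:value of f0} has already been carried out above and which relies essentially on the geometric properties \eqref{geo property} of $\m_1$ and $\m_2$. Since $\abs{\m_0} = 1$ a.e. by length conservation for \eqref{eqn:homogenized LLG system}, the triangle inequality gives the pointwise estimate $\abs{\boldsymbol{f}_0}\le(2\alpha+1)\abs{\widetilde{\mathbf{h}}}$, hence $\Vert\boldsymbol{f}_0\Vert_{L^r(\Omega)}\le(2\alpha+1)\Vert\widetilde{\mathbf{h}}\Vert_{L^r(\Omega)}$ for every $1\le r<\infty$.

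It then remains to bound $\widetilde{\mathbf{h}}$ in the two regimes. When $\mu_0 = 0$ the prefactor $\mu$ in \eqref{define tilde h} vanishes, so $\widetilde{\mathbf{h}}\equiv\boldsymbol{0}$ and $\boldsymbol{f}_0\equiv\boldsymbol{0}$, which is the first line of \eqref{eqn:consistency m2}. When $\mu_0>0$ and $n\neq1$, I would pull $\mu_0$ out of \eqref{define tilde h} so that $\Vert\widetilde{\mathbf{h}}\Vert_{L^r(\Omega)} = \mu_0\big\Vert M^\epsilon\hd[(M^\epsilon-M^\mathrm{h})\m_0] - M^\epsilon\mathbf{H}_{\mathrm{d}}[M_s(\by)](\tfrac{\bx}{\epsilon})\cdot\m_0\big\Vert_{L^r(\Omega)}$ and apply Lemma \ref{consistency of stray field first part} --- whose left-hand side in \eqref{estimate of stray field} is precisely this norm --- to get $\Vert\widetilde{\mathbf{h}}\Vert_{L^r(\Omega)}\le C_r\mu_0\big(\epsilon^{1/r}+\epsilon\ln(\epsilon^{-1}+1)\big)$. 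Combined with the previous paragraph this is the second line of \eqref{eqn:consistency m2}, and the stated dependence of $C,C_r$ on $\Vert\nabla\m_0\Vert_{H^4(\Omega)}$ and $\Vert\nabla M_s(\by)\Vert_{H^1(Y)}$ follows by absorbing the $\Vert\nabla\m_0\Vert_{W^{1,\infty}(\Omega)}$ from Lemma \ref{consistency of stray field first part} into $\Vert\nabla\m_0\Vert_{H^4(\Omega)}$ via Sobolev embedding.

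The real difficulty lies entirely in the two ingredients rather than in this theorem: the algebraic collapse of $\boldsymbol{f}_0$ to the compact form \eqref{eqn:value of f0 form2}, which crucially uses \eqref{geo property}, and --- above all --- Lemma \ref{consistency of stray field first part}, whose proof (the singular-integral estimate for the disturbed Newtonian potential plus the attendant boundary-layer analysis) is the genuine obstacle and is deferred to Section \ref{sec:Consistency of stray field}. Within the present argument the only place to be careful is the clean separation of the $O(\epsilon)$ part from $\boldsymbol{f}_0$ in the $L^2$ norm and the consolidation of the various regularity constants; nothing here is analytically deep.
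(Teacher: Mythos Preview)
Your proposal is correct and follows essentially the same route as the paper: the theorem is stated in the paper as a direct consequence of the expansion $\boldsymbol{f}^\epsilon=\sum_{k=-2}^{2}\epsilon^k\boldsymbol{f}_k$ with $\boldsymbol{f}_{-2}=\boldsymbol{f}_{-1}=\boldsymbol{0}$, the $L^2$ bound on $\boldsymbol{f}_1,\boldsymbol{f}_2$, the closed form \eqref{eqn:value of f0 form2} for $\boldsymbol{f}_0$, and Lemma~\ref{consistency of stray field first part}. Your explicit pointwise bound $|\boldsymbol{f}_0|\le(2\alpha+1)|\widetilde{\mathbf{h}}|$ and the case split on $\mu_0$ make the final step slightly more transparent than the paper's one-line ``Lemma~\ref{consistency of stray field first part} directly leads to the consistency error,'' but the argument is the same.
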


\subsection{Estimate of some singular integral}\label{sec:Estimate of singular integral}
The strategy to prove Lemma \ref{consistency of stray field first part} is to rewrite the stray field into derivatives of Newtonian potential, thus the consistency estimate turns into the estimate of singular integrals. The following Lemmas introduce the estimate of singular integral in terms of distribution function and boundary layer. We will use the cut-off function $\eta^\epsilon$ within the interior of area away from boundary:
\begin{equation}\label{cut off function}
	\left\{\begin{aligned}
		&0 \le \eta^\epsilon\le 1, & \abs{\nabla \eta^\epsilon} \le C \epsilon^{-1},& \\
		&\eta^\epsilon(x) = 1 & \mbox{if $\mathrm{dist}(x, \partial\Omega)\ge \frac{2}{3}\epsilon$},&\\
		&\eta^\epsilon(x) = 0 & \mbox{if $\mathrm{dist}(x, \partial\Omega)\le \frac{1}{3}\epsilon$}.&
	\end{aligned}\right.
\end{equation}
where $\mathrm{dist}(\bx,\partial\Omega)$ denotes the distance between $\bx$ and $\partial\Omega$, and cut-off function $\phi^\epsilon$ in a small ball:
\begin{equation}\label{cut off function in small ball}
	\left\{\begin{aligned}
		&0 \le \phi^\epsilon\le 1, & \abs{\nabla \phi^\epsilon} \le C \epsilon^{-1},& \\
		&\phi^\epsilon(\bx) = 1 & \mbox{if $\abs{\bx}\le \frac{1}{3}\epsilon$},&\\
		&\phi^\epsilon(\bx) = 0 & \mbox{if $\abs{\bx}\ge \frac{2}{3}\epsilon$}.&
	\end{aligned}\right.
\end{equation}
Denote the boundary layer $\Omega^\epsilon$ as
\begin{equation*}
	\Omega^\epsilon = 
	\{ \bx \in \Omega, \mathrm{dist}(\bx, \partial\Omega)\le \epsilon \}.
\end{equation*}
\begin{lemma}\label{lemma: singula integral}
	Assume that scalar functions $f(\by)\in C^1(R^n)$ is $Y$-periodic, $g(\bx)\in C^1(\bar{\Omega})$, define for $\bx\in\Omega$
	\begin{equation*}
	u(\bx) = 	\int_{\Omega}
	\frac{\big\vert f(\frac{\bx}{\epsilon}) - f(\frac{\bz}{\epsilon})\big\vert} {\vert \bx-\bz \vert^{n}} 
	\d \bz,
	\quad\quad
	v(\bx) = 	\int_{\Omega^\epsilon}
	\frac{\vert g(\bx) - g(\bz)\vert} {\vert \bx-\bz \vert^{n}} 
	\d \bz,
	\end{equation*} 
	then $u(\bx) \in L^\infty(\Omega)$ logarithmically grows with respect to $\epsilon$, satisfying
	\begin{equation*}
		\begin{aligned}
			\Vert u \Vert_{L^{\infty}(\Omega)}
			\le C
			\ln (\epsilon^{-1} + 1) \Vert f(\by) \Vert_{L^{\infty}(Y)} 
			+
			C\Vert \nabla f(\by) \Vert_{L^{\infty}(Y)} ,
		\end{aligned}
	\end{equation*}
and $v(\bx)\in L^r(\Omega)$ decreases at speed of $O(\epsilon^{1/r})$ for any $1\le r<\infty$, satisfying
\begin{equation*}
	\begin{aligned}
		\Vert v \Vert_{L^{r}(\Omega)}
		\le C \epsilon^{1/r}
		\ln (\epsilon^{-1} + 1) 
		\big(\Vert g(\bx) \Vert_{L^{\infty}(\Omega)} 
		+
		\epsilon\Vert \nabla g(\bx) \Vert_{L^{\infty}(\Omega)}\big) .
	\end{aligned}
\end{equation*}
Constant $C$ is independent of $\epsilon$.
\end{lemma}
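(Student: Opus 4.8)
\textbf{Proof proposal for Lemma \ref{lemma: singula integral}.}

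The plan is to estimate the two singular integrals separately by the standard annular decomposition of $\Omega$ around the base point $\bx$, exploiting the near-cancellation in the numerators. For $u(\bx)$, fix $\bx\in\Omega$ and split $\Omega\subset B(\bx,\mathrm{diam}\,\Omega)$ into the dyadic-type shells: an inner ball $\abs{\bx-\bz}\le\epsilon$ and an outer region $\abs{\bx-\bz}>\epsilon$. On the outer region I bound the numerator crudely by $2\Vert f\Vert_{L^\infty(Y)}$, so that the contribution is at most $C\Vert f\Vert_{L^\infty(Y)}\int_{\epsilon\le\abs{\bx-\bz}\le C}\abs{\bx-\bz}^{-n}\d\bz = C\Vert f\Vert_{L^\infty(Y)}\ln(\epsilon^{-1}+1)$, which accounts for the logarithmic term. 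On the inner ball $\abs{\bx-\bz}\le\epsilon$ I use instead the Lipschitz bound on $f(\cdot/\epsilon)$, namely $\abs{f(\bx/\epsilon)-f(\bz/\epsilon)}\le \epsilon^{-1}\Vert\nabla f\Vert_{L^\infty(Y)}\abs{\bx-\bz}$, so the integrand is $O(\epsilon^{-1}\abs{\bx-\bz}^{1-n})$ and $\int_{\abs{\bx-\bz}\le\epsilon}\epsilon^{-1}\abs{\bx-\bz}^{1-n}\d\bz\le C$; this gives the $\Vert\nabla f\Vert_{L^\infty(Y)}$ term. Taking the supremum over $\bx$ yields the claimed $L^\infty$ bound. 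Note the inner bound does \emph{not} reproduce a logarithm, which is why only $\Vert f\Vert_{L^\infty(Y)}$ carries the $\ln$ factor.

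For $v(\bx)$ the extra structural feature is that $\bz$ is restricted to the thin layer $\Omega^\epsilon$. I again split at radius $\epsilon$. On $\abs{\bx-\bz}\le\epsilon$ the Lipschitz bound $\abs{g(\bx)-g(\bz)}\le\Vert\nabla g\Vert_{L^\infty(\Omega)}\abs{\bx-\bz}$ gives a pointwise bound $C\Vert\nabla g\Vert_{L^\infty(\Omega)}$ for the inner piece, uniformly in $\bx$. On $\abs{\bx-\bz}>\epsilon$ I bound the numerator by $2\Vert g\Vert_{L^\infty(\Omega)}$ and integrate $\abs{\bx-\bz}^{-n}$ over $\{\bz\in\Omega^\epsilon:\abs{\bx-\bz}>\epsilon\}$; since $\Omega^\epsilon$ is a layer of width $\epsilon$ around an $(n-1)$-dimensional boundary, integrating in the tangential directions over shells of radius $t\in(\epsilon,C)$ produces a factor of order $\epsilon\cdot t^{-2}$ and yields $C\Vert g\Vert_{L^\infty(\Omega)}$ pointwise as well — in fact one gets a $\ln(\epsilon^{-1}+1)$ only if the base point $\bx$ itself lies in or near $\Omega^\epsilon$. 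This shows $v(\bx)\le C\ln(\epsilon^{-1}+1)\bigl(\Vert g\Vert_{L^\infty(\Omega)}+\epsilon\Vert\nabla g\Vert_{L^\infty(\Omega)}\bigr)$ for $\bx$ near the boundary and $v(\bx)\le C\bigl(\Vert g\Vert_{L^\infty(\Omega)}+\epsilon\Vert\nabla g\Vert_{L^\infty(\Omega)}\bigr)\,\mathrm{dist}(\bx,\partial\Omega)^{-1}\cdot\epsilon$ away from it. To pass from this pointwise control to the $L^r$ bound with the gain $\epsilon^{1/r}$, I integrate the $r$-th power over $\Omega$: the set where $v$ is merely logarithmically bounded (a neighborhood of $\partial\Omega$ of width $O(\epsilon)$) has measure $O(\epsilon)$, contributing $\epsilon\cdot[\ln(\epsilon^{-1}+1)]^r$ to $\Vert v\Vert_{L^r}^r$; the remainder contributes a convergent integral of $(\mathrm{dist}(\bx,\partial\Omega)^{-1}\epsilon)^r$, which is also $O(\epsilon)$ for $r>1$ (and $O(\epsilon\ln(\epsilon^{-1}+1))$ at $r=1$), absorbed into the stated bound. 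Taking $r$-th roots gives the factor $\epsilon^{1/r}$.

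The main obstacle is the careful geometric bookkeeping for $v$: one must use the codimension-one nature of $\Omega^\epsilon$ to extract the width factor $\epsilon$ from the integral $\int_{\Omega^\epsilon}\abs{\bx-\bz}^{-n}\d\bz$, and then track how the bound degrades as $\bx$ approaches $\partial\Omega$, since that is precisely where the logarithm reappears and where the measure of the bad set is controlled. This requires flattening the boundary locally via the smoothness of $\partial\Omega$ and using a partition of unity, together with Fubini in the (tangential, normal) coordinates. Everything else — the inner Lipschitz estimates, the outer $L^\infty$ estimates, and the final $L^r$ integration — is routine once this layer estimate is in place.
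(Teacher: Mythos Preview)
Your argument for $u(\bx)$ is correct and coincides with the paper's: split at radius $\epsilon$, use the crude $2\Vert f\Vert_{L^\infty}$ bound outside and the Lipschitz bound with slope $\epsilon^{-1}\Vert\nabla f\Vert_{L^\infty}$ inside.

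For $v(\bx)$ your strategy is also correct, but it differs from the paper's. You establish a pointwise bound $v(\bx)\lesssim \epsilon/\mathrm{dist}(\bx,\partial\Omega)$ away from the boundary (via the codimension-one structure of the layer, which indeed needs local flattening of $\partial\Omega$) and a log-bound near the boundary, and then integrate the $r$-th power using the coarea formula. The paper instead bypasses all the pointwise geometry by a Fubini/Schur-type argument: after the same near/far split via the cutoff $\phi^\epsilon(\bx-\bz)$, it bounds $\Vert R_i\Vert_{L^r}^r$ by
\[
\Big(\sup_{\bx\in\Omega}\int_{\Omega^\epsilon} K^\epsilon(\bx,\bz)\,\d\bz\Big)^{r-1}
\cdot
\sup_{\bz\in\Omega^\epsilon}\int_{\Omega} K^\epsilon(\bx,\bz)\,\d\bx
\cdot
\abs{\Omega^\epsilon},
\]
with $K^\epsilon$ the relevant kernel $\phi^\epsilon\abs{\bx-\bz}^{1-n}$ or $(1-\phi^\epsilon)\abs{\bx-\bz}^{-n}$. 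The three factors are bounded crudely by $\epsilon^{r-1}\cdot\epsilon\cdot\epsilon$ (near piece) and $[\ln(\epsilon^{-1}+1)]^{r-1}\cdot\ln(\epsilon^{-1}+1)\cdot\epsilon$ (far piece), with no need to track $\mathrm{dist}(\bx,\partial\Omega)$ or flatten the boundary. Your route yields the slightly sharper pointwise information $v(\bx)\lesssim\epsilon/\mathrm{dist}(\bx,\partial\Omega)$, at the cost of the geometric bookkeeping you flagged; the paper's route is shorter but gives only the integrated bound. Both deliver the stated $\epsilon^{1/r}\ln(\epsilon^{-1}+1)$ estimate.
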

\begin{proof} Splitting the integral in $u$ into $\int_{\Omega-B(x,\epsilon)} + \int_{B(x,\epsilon)}$, one can estimate it by
	\begin{equation*}
		\begin{aligned}
			\abs{u(\bx)}
			\le & C
			\int_{\Omega-B(x,\epsilon)}
			\frac{ \Vert f(\by) \Vert_{L^{\infty}(Y)} }{\vert \bx-\bz \vert^{n}} \d \bz
			+ C \epsilon^{-1}
			\int_{B(x,\epsilon)}
			\frac{\Vert \nabla f(\by) \Vert_{L^{\infty}(Y)} }{\vert \bx-\bz \vert^{n-1}} 
			\d \bz,
		\end{aligned}
	\end{equation*}
	therefore the estimate of $u$ in Lemma follows by simple integral.
As for the estimate of $v$, by application of cut-off function $\phi^\epsilon = \phi^\epsilon (\bx-\bz)$, one has
\begin{equation*}
	\begin{aligned}
		\abs{v(\bx)}
		=&
		\int_{\Omega^\epsilon}
		\frac{\phi^\epsilon \vert g(\bx) - g(\bz)\vert} {\vert \bx-\bz \vert^{n}} 
		\d \bz
		+
			\int_{\Omega^\epsilon}
		\frac{(1-\phi^\epsilon)\vert g(\bx) - g(\bz)\vert} {\vert \bx-\bz \vert^{n}} 
		\d \bz\\
		\le & C
		\Vert \nabla g \Vert_{L^\infty(\Omega)}
		\int_{\Omega^\epsilon}
		\frac{\phi^\epsilon } {\vert \bx-\bz \vert^{n-1}} 
		\d \bz
		+ C
		\Vert g \Vert_{L^\infty(\Omega)}
		\int_{\Omega^\epsilon}
		\frac{1-\phi^\epsilon} {\vert \bx-\bz \vert^{n}} 
		\d \bz\\
		=& R_1 + R_2.
	\end{aligned}
\end{equation*}
For $R_1$, one can write by Fubini's Theorem
\begin{equation*}
	\begin{aligned}
		\Vert R_1 \Vert_{L^{r}(\Omega)}^r 
		\le&
		C \Vert \nabla g \Vert_{L^\infty(\Omega)}^r
		\int_{\Omega}\Big(\int_{\Omega^\epsilon}
		\frac{\phi^\epsilon}{\vert \bx-\bz \vert^{n - 1}}  \d \bz\Big)^r\d \bx\\
		\le &
		C \Vert \nabla g \Vert_{L^\infty(\Omega)}^r
		\sup\limits_{x\in\Omega}\Big(\int_{\Omega^\epsilon}
		\frac{\phi^\epsilon}{\vert \bx-\bz \vert^{n - 1}}  \d \bz\Big)^{r-1}\\
		&\quad\quad\times
		\sup\limits_{\bz\in\Omega^\epsilon}\int_{\Omega} 
		\frac{\phi^\epsilon}{\vert \bx-\bz \vert^{n - 1}}  \d \bx 
		\int_{\Omega^\epsilon}1 \d \bz\\
		\le & C \Vert \nabla g \Vert_{L^\infty(\Omega)}^r \cdot C \epsilon^{r-1}
		\cdot C \epsilon\cdot C\epsilon.
	\end{aligned}
\end{equation*}
As for $R_2$, applying the same argument leads to
\begin{equation*}
	\begin{aligned}
		\Vert R_2 \Vert_{L^{r}(\Omega)}^r 
		\le &
		C \Vert g \Vert_{L^\infty(\Omega)}^r
		\sup\limits_{x\in\Omega}\Big(\int_{\Omega^\epsilon}
		\frac{1-\phi^\epsilon}{\vert \bx-\bz \vert^{n}}  \d \bz\Big)^{r-1}\\
		&\quad\quad\times
		\sup\limits_{\bz\in\Omega^\epsilon}\int_{\Omega} 
		\frac{1-\phi^\epsilon}{\vert \bx-\bz \vert^{n}}  \d \bx 
		\int_{\Omega^\epsilon}1 \d \bz\\
		\le & C \Vert g \Vert_{L^\infty(\Omega)}^r \cdot C [\ln(\epsilon^{-1}+1)]^{r-1}
		\cdot C \ln(\epsilon^{-1}+1) \cdot C\epsilon.
	\end{aligned}
\end{equation*}
\end{proof}

\begin{lemma}\label{Newtonian on the boundary layer}
	Assume that a scalar function $f^\epsilon(\bx)\in L^\infty(\Omega)$ satisfies $f^\epsilon(\bx) = 0$ when $\bx\in \Omega-\Omega^\epsilon$, which means $f^\epsilon$ is nonzero only in boundary layer. Let $w(\bx)$ be the Newtonian potential of $ f^\epsilon$ in $\Omega$, i.e.,
	\begin{equation*}
		w(\bx) = \int_{\Omega} \Phi(\bx-\boldsymbol{z}) f^\epsilon(\boldsymbol{z}) \d \boldsymbol{z},\quad \bx\in\Omega,
	\end{equation*}
	where $\Phi$ is the fundamental solution of Laplace's equation. Then $w(\bx) \in W^{2,p}(\Omega)$ satisfies for any $1 \le p < +\infty$
	\begin{equation*}
		\begin{aligned}
			&\Vert \nabla^2 w \Vert_{L^{p}(\Omega)}
			\le C \big(\epsilon^{1/p}
			+
			\epsilon \ln (\epsilon^{-1} + 1) \big)
			\big(\Vert f^\epsilon(\bx) \Vert_{L^{\infty}(\Omega)} 
			+
			\epsilon\Vert \nabla f^\epsilon(\bx) \Vert_{L^{\infty}(\Omega)}\big).
		\end{aligned}
	\end{equation*}
 Constant $C$ is independent of $\epsilon$.
\end{lemma}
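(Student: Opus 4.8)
The plan is to reduce the bound on $\nabla^2 w$ to a pointwise estimate on the second derivatives of the Newtonian potential and then to apply Lemma \ref{lemma: singula integral}. Recall that for the fundamental solution $\Phi$ of Laplace's equation in $\mathbb{R}^n$, the second derivatives $\partial_i\partial_j\Phi(\bx-\bz)$ have a singularity of order $\vert\bx-\bz\vert^{-n}$. One cannot differentiate twice under the integral sign directly because the resulting kernel is not absolutely integrable; instead I would use the standard device of writing, for $\bx\in\Omega$,
\begin{equation*}
	\partial_i\partial_j w(\bx)
	=
	\int_{\Omega} \partial_i\partial_j\Phi(\bx-\bz)\,\big(f^\epsilon(\bz) - f^\epsilon(\bx)\,\psi(\bx-\bz)\big)\d\bz
	+
	f^\epsilon(\bx)\int_{\partial B} \cdots,
\end{equation*}
where $\psi$ is a fixed cut-off that is $1$ near the origin, so that the subtracted term regularizes the singularity and the boundary term is harmless since $f^\epsilon$ is bounded. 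Here I would use the Lipschitz-type control $\vert f^\epsilon(\bz)-f^\epsilon(\bx)\vert \le \vert\bx-\bz\vert\,\Vert\nabla f^\epsilon\Vert_{L^\infty}$ together with the fact that $f^\epsilon$ is supported in $\Omega^\epsilon$; this is exactly the structure appearing in the function $v$ of Lemma \ref{lemma: singula integral} (with $g$ replaced by $f^\epsilon$, up to the harmless cut-off terms), so that
\begin{equation*}
	\abs{\partial_i\partial_j w(\bx)}
	\le
	C\int_{\Omega^\epsilon}
	\frac{\abs{f^\epsilon(\bx)-f^\epsilon(\bz)}}{\vert\bx-\bz\vert^n}\d\bz
	+
	C\,\Vert f^\epsilon\Vert_{L^\infty(\Omega)}.
\end{equation*}

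Once this pointwise inequality is in place, I would invoke the $v$-part of Lemma \ref{lemma: singula integral}: it gives exactly
\begin{equation*}
	\Big\Vert \int_{\Omega^\epsilon}\frac{\abs{f^\epsilon(\bx)-f^\epsilon(\bz)}}{\vert\bx-\bz\vert^n}\d\bz \Big\Vert_{L^r(\Omega)}
	\le
	C\epsilon^{1/r}\ln(\epsilon^{-1}+1)\big(\Vert f^\epsilon\Vert_{L^\infty(\Omega)} + \epsilon\Vert\nabla f^\epsilon\Vert_{L^\infty(\Omega)}\big),
\end{equation*}
which already exhibits the claimed $\epsilon^{1/p}$-type decay (and the logarithmic factor can be absorbed into the slightly larger bound $\epsilon^{1/p} + \epsilon\ln(\epsilon^{-1}+1)$ stated in the Lemma, e.g.\ by splitting cases $r$ large versus $r$ small, or simply by noting $\epsilon^{1/r}\ln(\epsilon^{-1}+1)\le C(\epsilon^{1/p}+\epsilon\ln(\epsilon^{-1}+1))$ after adjusting $p<r$). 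The leftover term $C\Vert f^\epsilon\Vert_{L^\infty(\Omega)}$ must be handled separately: since this term carries no smallness on its own, I would gain the $\epsilon^{1/p}$ factor from integration over the support, observing that the boundary-term contributions to $\partial_i\partial_j w$ are themselves supported (in $\bx$) only where $f^\epsilon(\bx)\neq 0$, i.e.\ in $\Omega^\epsilon$, whose measure is $O(\epsilon)$; hence its $L^p$ norm is $O(\epsilon^{1/p})\Vert f^\epsilon\Vert_{L^\infty}$.

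Assembling the two contributions yields
\begin{equation*}
	\Vert\nabla^2 w\Vert_{L^p(\Omega)}
	\le
	C\big(\epsilon^{1/p} + \epsilon\ln(\epsilon^{-1}+1)\big)\big(\Vert f^\epsilon\Vert_{L^\infty(\Omega)} + \epsilon\Vert\nabla f^\epsilon\Vert_{L^\infty(\Omega)}\big),
\end{equation*}
which is the assertion; membership $w\in W^{2,p}(\Omega)$ follows from finiteness of the right-hand side together with the classical Calder\'on--Zygmund theory for the Newtonian potential of a bounded compactly supported density. I expect the main obstacle to be the careful justification of the singular-integral representation of $\partial_i\partial_j w$ near the boundary of $\Omega$ (as opposed to on all of $\mathbb{R}^n$): one must check that the cut-off/subtraction argument goes through uniformly for $\bx$ close to $\partial\Omega$, and that the auxiliary boundary terms are genuinely controlled by $\Vert f^\epsilon\Vert_{L^\infty}$ and localized to $\Omega^\epsilon$; the smoothness of $\partial\Omega$ assumed in Assumption \ref{assumption} is what makes this routine but it needs to be spelled out. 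The matching of exponents ($\epsilon^{1/r}\ln$ versus $\epsilon^{1/p}+\epsilon\ln$) is a minor bookkeeping point rather than a real difficulty.
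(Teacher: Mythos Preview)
Your approach matches the paper's only for the endpoint $p=1$: there the paper also writes the classical representation
\[
\partial_i\partial_j w(\bx)=\int_{\Omega}\partial_i\partial_j\Phi(\bx-\bz)\big(f^\epsilon(\bz)-f^\epsilon(\bx)\big)\d\bz
+f^\epsilon(\bx)\int_{\partial\Omega}\nu^i\partial_j\Phi(\bx-\bz)\d\sigma=:S_1+S_2,
\]
applies the $v$-estimate of Lemma~\ref{lemma: singula integral} to $S_1$, and handles $S_2$ in $L^1$ by a Fubini argument (not a pointwise bound). For $1<p<\infty$, however, the paper does something different and much simpler: it invokes Calder\'on--Zygmund directly,
\[
\Vert\nabla^2 w\Vert_{L^p(\Omega)}\le C\Vert f^\epsilon\Vert_{L^p(\Omega)}\le C\vert\Omega^\epsilon\vert^{1/p}\Vert f^\epsilon\Vert_{L^\infty(\Omega)}\le C\epsilon^{1/p}\Vert f^\epsilon\Vert_{L^\infty(\Omega)},
\]
which already gives the $\epsilon^{1/p}$ with no logarithm.

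This is exactly where your proposal has a genuine gap. Running the $v$-estimate of Lemma~\ref{lemma: singula integral} for all $p$ yields only $\epsilon^{1/p}\ln(\epsilon^{-1}+1)$, and your suggested ``bookkeeping'' to absorb the logarithm does not work: for any fixed $p>1$,
\[
\frac{\epsilon^{1/p}\ln(\epsilon^{-1}+1)}{\epsilon^{1/p}+\epsilon\ln(\epsilon^{-1}+1)}\sim\ln(\epsilon^{-1})\to\infty
\quad(\epsilon\to0),
\]
and replacing $p$ by some $r>p$ in Lemma~\ref{lemma: singula integral} only makes the power $\epsilon^{1/r}$ worse. So the uniform-in-$p$ pointwise route cannot recover the stated bound; you need the Calder\'on--Zygmund step for $p>1$. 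A secondary issue is your pointwise claim $\vert\partial_i\partial_j w(\bx)\vert\le Cv(\bx)+C\Vert f^\epsilon\Vert_{L^\infty}$ with the last term supported in $\Omega^\epsilon$: after restricting the volume integral to $\Omega^\epsilon$ you are implicitly asserting that $f^\epsilon(\bx)\int_{\Omega\setminus\Omega^\epsilon}\partial_i\partial_j\Phi(\bx-\bz)\d\bz$ (equivalently, the surface integral $\int\nu_j\partial_i\Phi$ over the inner boundary of $\Omega^\epsilon$) is bounded uniformly as $\bx$ approaches that surface. This is true for smooth surfaces but is a nontrivial layer-potential fact, and the paper sidesteps it entirely by estimating $S_2$ directly in $L^1$ rather than pointwise.
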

\begin{proof}
	The case of $1< p< +\infty$ follows directly by the property of Newtonian potential:
\begin{equation*}
	\Vert \nabla^2 w \Vert_{L^{p}(\Omega)}
	\le 
	C \Vert f^\epsilon \Vert_{L^{p}(\Omega)}
	\le 
	C \Vert f^\epsilon \Vert_{L^{p}(\Omega^\epsilon)},
\end{equation*}
and the fact
\begin{equation}\label{boundary layer estimate}
	\Vert f^\epsilon \Vert_{L^{p}(\Omega^\epsilon)}
	\le 
	\abs{\Omega^\epsilon}^{1/p} 
	\Vert f^\epsilon \Vert_{L^{\infty}(\Omega)}.
\end{equation}
Now let us consider the case of $p=1$ and write 
\begin{equation*}
	\begin{aligned}
		\frac{\partial^2 w}{\partial x_i \partial x_j} 
		= &
		\int_{\Omega} \frac{\partial^2 }{\partial x_i \partial x_j} \big\{\Phi(\bx-\boldsymbol{z})\big\} 
		\cdot \big\{ f^\epsilon(\boldsymbol{z}) 
		-f^\epsilon(\bx)
		\big\}\d \boldsymbol{z}\\
		& +
		f^\epsilon(\bx)
		\int_{\partial\Omega} \nu^i\cdot \frac{\partial }{\partial x_j} \big\{\Phi(\bx-\boldsymbol{z})\big\} \d \boldsymbol{z}\\
		=:& S_1 + S_2.
	\end{aligned}
\end{equation*}
For $S_1$, one can apply Lemma \ref{lemma: singula integral} to derive
\begin{equation*}
	\Vert S_1 \Vert_{L^{1}(\Omega)}
	\le C \epsilon
	\ln (\epsilon^{-1} + 1) 
	\big(\Vert f^\epsilon(\bx) \Vert_{L^{\infty}(\Omega)} 
	+
	\epsilon\Vert \nabla f^\epsilon(\bx) \Vert_{L^{\infty}(\Omega)}\big).
\end{equation*}
As for $S_2$, we can split the integral into $\int_{\partial\Omega-B(\bx,\epsilon)} + \int_{\partial\Omega\cap B(\bx,\epsilon)}$, and write
\begin{equation*}
	\begin{aligned}
		\Vert S_2 \Vert_{L^1(\Omega)} 
		\le &
		\sup\limits_{\bx\in\Omega}
		\int_{\partial\Omega-B(\bx,\epsilon)} \nu^i\cdot \frac{\partial }{\partial x_j} \big\{\Phi(\bx-\boldsymbol{z})\big\} \d \boldsymbol{z}
		\times 
		\int_{\Omega} f^\epsilon(\bx) \d \bx \\
		&\quad+
		\sup\limits_{\bz\in\partial\Omega}
		\int_{\Omega\cap B(\bz,\epsilon)} \nu^i\cdot \frac{\partial }{\partial x_j} \big\{\Phi(\bx-\boldsymbol{z})\big\} \cdot f^\epsilon(\bx) \d \bx
		\times 
		\int_{\partial\Omega} 1 \d \bz \\
		\le & C
		\ln (\epsilon^{-1}+1)  \times \epsilon \Vert f^\epsilon \Vert_{L^\infty(\Omega)}+C\epsilon \Vert f^\epsilon \Vert_{L^\infty(\Omega)},
	\end{aligned}
\end{equation*}
here in the second line we have used the Fubini's theorem.
Thus the Lemma is proved.
\end{proof}

\subsection{Consistency error of stray field}\label{sec:Consistency of stray field}
Now we are ready to prove the consistency error of stray field $\widetilde{\mathbf{h}}$ in Lemma \ref{consistency of stray field first part}. The idea is to use result in \cite{Praetorius2004AnalysisOT} and Green's representation formula, to rewrite $\widetilde{\mathbf{h}}$ into singular integral that of the types estimated in above Lemmas.

\begin{proof}(Proof of Lemma \ref{consistency of stray field first part})
	Recall from \eqref{define hd} the stray field in LLG equation can be calculated by
	\begin{equation}\label{stray field of Meps - Mh}
		\hd[(M^\epsilon - M^\mathrm{h}) \m_0]
		=
		\nabla U,
	\end{equation}
	where $U = U[(M^\epsilon - M^\mathrm{h}) \m_0]$ satisfies
	\begin{equation*}
		\Delta U
		=
		-\bdiv [(M^\epsilon - M^\mathrm{h}) \m_0\mathcal{X}_\Omega]
		\quad \mbox{in $D'(R^n)$}.
	\end{equation*}
	Denotes the $i$th component of $\m_0$ by $m_{0,i}$. Using the fact $\abs{\m_0} = 1$, one can write  \cite{Praetorius2004AnalysisOT}
	\begin{equation*}
		\begin{aligned}
			U(\bx)
			= &
			- \sum_{i=1}^n \int_{\Omega}\frac{\partial}{\partial x_i} \Phi(\bx-\boldsymbol{z}) (M_s(\frac{\boldsymbol{z}}{\epsilon}) - M^\mathrm{h})  m_{0,i} (\boldsymbol{z}) \d \boldsymbol{z}.
		\end{aligned}
	\end{equation*}
	Substituting above representation of $U(\bx)$ into \eqref{stray field of Meps - Mh} and making the use of cut-off function $\eta^\epsilon$ defined in \eqref{cut off function}, one can derive
	\begin{equation}\label{rewrite hd[Meps m0 - Mh m0]}
		\begin{aligned}
			&\hd[(M^\epsilon - M^\mathrm{h}) \m_0]\\
			= &-
			\nabla \Big( \sum_{i=1}^n \int_{\Omega} \frac{\partial}{\partial x_i}\Phi(\bx-\boldsymbol{z}) \eta^\epsilon(\boldsymbol{z})
		(M(\frac{\boldsymbol{z}}{\epsilon}) - M^\mathrm{h})
		 m_{0,i} (\boldsymbol{z}) \d \boldsymbol{z}\Big)\\
			&-
			\nabla \Big( \sum_{i=1}^n \int_{\Omega^\epsilon}
			\frac{\partial}{\partial x_i}\Phi(\bx-\boldsymbol{z}) (1-\eta^\epsilon(\boldsymbol{z}))
			(M(\frac{\boldsymbol{z}}{\epsilon}) - M^\mathrm{h}) 
			m_{0,i} (\boldsymbol{z}) \d \boldsymbol{z}\Big)\\
			=: & 
			\mathcal{P}^\epsilon
			+
			\widetilde{\mathcal{P}}^\epsilon,
		\end{aligned}
	\end{equation}
where $\widetilde{\mathcal{P}}^\epsilon$ is the derivative of Newtonian potential in boundary layer that can be estimated by Lemma \ref{Newtonian on the boundary layer}.
	Define $\widetilde{U}(\by)$ as the solution of
	\begin{equation}\label{define tilde U}
		\Delta \widetilde{U}(\by) = - (M_s(\by) - M^\mathrm{h}), \quad \mbox{$U(\by)$ is $Y$-periodic in $\by$},
	\end{equation}
	then one can write from \eqref{define H_d} and \eqref{define tilde U} that
\begin{equation}\label{expression of Hd}
	\begin{aligned}
	\mathbf{H}_{\mathrm{d}}[M_s(\by)](\frac{\bx}{\epsilon})
	= & 
	\epsilon^2 \nabla^2 \widetilde{U}(\frac{\bx}{\epsilon})\\
	= &
	\epsilon^2 \nabla^2 \big\{\eta^\epsilon(\bx) \widetilde{U}(\frac{\bx}{\epsilon})\big\}
	+
	\epsilon^2 \nabla^2 \big\{(1-\eta^\epsilon(\bx)) \widetilde{U}(\frac{\bx}{\epsilon})\big\}.
	\end{aligned}
\end{equation}
Note that by Green's representation formula,
\begin{equation*}\label{Green representation}
	\begin{aligned}
		\epsilon^2 \eta^\epsilon(\bx) \widetilde{U}(\frac{\bx}{\epsilon})
		= & -
		\int_{\Omega} \Phi(\bx-\bz) 
		\Delta \big(\epsilon^2\eta^\epsilon (\bz)\widetilde{U}(\frac{\bz}{\epsilon})\big) \d \bz.
	\end{aligned}
\end{equation*}
Substituting the above formula into \eqref{expression of Hd} and using the fact of $\widetilde{U}$
\begin{equation*}
	- \Delta \big(\epsilon^2\eta^\epsilon (\bz)\widetilde{U}(\frac{\bz}{\epsilon})\big)
	=
	\eta^\epsilon (M(\frac{\boldsymbol{z}}{\epsilon}) - M^\mathrm{h}) 
	- 
	\big\{ \epsilon^2 \Delta\eta^\epsilon (\bz) \cdot \widetilde{U}(\frac{\bz}{\epsilon})
	+ 2 \epsilon^2 \nabla\eta^\epsilon (\bz) \cdot \nabla\widetilde{U}(\frac{\bz}{\epsilon}) \big\},
\end{equation*}
we finally obtain
\begin{equation*}\label{final value of H m0}
	\begin{aligned}
		&\m_0 \cdot \mathbf{H}_{\mathrm{d}}[M_s(\by)](\frac{\bx}{\epsilon})\\
		= &
		\m_0 \cdot \nabla^2 \int_{\Omega} \Phi(\bx-\bz) 
		\eta^\epsilon (\bz)
		(M(\frac{\boldsymbol{z}}{\epsilon}) - M^\mathrm{h}) \d \bz
		 + \m_0\cdot  \Big\{ 
		\epsilon^2 \nabla^2 \big\{(1-\eta^\epsilon(\bx)) \widetilde{U}(\frac{\bx}{\epsilon})\big\}\\
		& +
		\nabla^2 \int_{\Omega} \Phi(\bx-\bz) 
		\big\{ \epsilon^2 \Delta\eta^\epsilon (\bz) \cdot \widetilde{U}(\frac{\bz}{\epsilon})
		+ 2 \epsilon^2 \nabla\eta^\epsilon (\bz) \cdot \nabla\widetilde{U}(\frac{\bz}{\epsilon}) \big\} \d \bz
		 \Big\}\\
		=: & \mathcal{Q}^\epsilon
		+
		\widetilde{\mathcal{Q}}^\epsilon,
	\end{aligned}
\end{equation*}
where the boundary layer term $\widetilde{\mathcal{Q}}^\epsilon$ can be estimated by Lemma \ref{Newtonian on the boundary layer} and \eqref{boundary layer estimate}.
Now in order to estimate the left-hand side of \eqref{estimate of stray field} in the Lemma, it only remains to consider the term $\mathcal{P}^\epsilon - \mathcal{Q}^\epsilon$. Notice that one can write
\begin{equation}\label{P-Q}
	\begin{aligned}
	\mathcal{P}^\epsilon - \mathcal{Q}^\epsilon
	=
	\sum_{i=1}^n \int_{\Omega} &\frac{\partial}{\partial x_i} 
	\big\{\nabla_{\bx} \Phi(\bx-\boldsymbol{z})\big\} \eta^\epsilon(\boldsymbol{z})\\
	&\times (M(\frac{\boldsymbol{z}}{\epsilon}) - M^\mathrm{h})
	\big(m_{0,i} (\bx) - m_{0,i} (\boldsymbol{z})\big) \d \boldsymbol{z}.
	\end{aligned}
\end{equation}
With the notation \eqref{define tilde U}, one has
	\begin{equation*}
	(M(\frac{\boldsymbol{z}}{\epsilon}) - M^\mathrm{h})
	=
	\nabla_{\bz} \cdot\big\{ 
	\nabla_{\bz} (\epsilon^2 \widetilde{U}(\frac{\boldsymbol{z}}{\epsilon}))
	-
	\nabla_{\bx} (\epsilon^2 \widetilde{U}(\frac{\bx}{\epsilon})) \big\}.
\end{equation*}
After substituting it into \eqref{P-Q} and applying integration by parts, the leading integrals are estimated directly by application of Lemma \ref{lemma: singula integral}.
\end{proof}

\section{Boundary Corrector}\label{section:boundary corrector}

\subsection{Neumann corrector}
Let us give the definition of Neumann corrector $\boldsymbol{\omega}_\mathrm{N}$ as
\begin{equation}\label{define w_b}
	\boldsymbol{\omega}_\mathrm{N}
	=
	\sum_{i = 1}^n \big(\Phi_i - x_i - \epsilon\chi^\epsilon_i \big)
	\frac{\partial \m^0}{\partial x_i}
\end{equation}
with the notation $\chi^\epsilon_i(\bx) = \chi_i(\frac{\bx}{\epsilon})$, $x_i$ is the $i$th component of spatial variable, and $(\Phi_i)_{1\le i \le n}$ is given by
\begin{equation}\label{define Phi}
\left\{\begin{aligned}
\bdiv (a^\epsilon \nabla \Phi_i) &= \bdiv (a^0 \nabla x_i) \quad \mbox{in $\Omega$} ,\\
\frac{\partial}{\partial \boldsymbol{\nu}^\epsilon} \Phi_i &= \frac{\partial}{\partial \boldsymbol{\nu}^\mathrm{h}} x_i \quad \mbox{on $\partial\Omega$} .
\end{aligned}\right.
\end{equation}
Here we denote $\frac{\partial}{\partial \boldsymbol{\nu}^\epsilon} = \boldsymbol{\nu}\cdot a^\epsilon\nabla$, $\frac{\partial}{\partial \boldsymbol{\nu}^\mathrm{h}} = \boldsymbol{\nu} \cdot a^0 \nabla$. Thus $x_i$ is the homogenized solution of $\Phi_i$ from \eqref{define Phi}. Since $\Phi_i$ is unique up to a constant, one may assume $\Phi_i (\tilde{\bx}) - \tilde{\bx}= 0$  for some $\tilde{\bx}\in \Omega$. We introduce that $\Phi_i - x_i - \epsilon\chi^\epsilon_i$ has following property.
\begin{lemma}\label{lemma: estimate of chi}
	For $\Phi_i$ given in \eqref{define Phi}, under the smoothness assumption on $A(y)$ and $\partial\Omega$, it holds that (see \cite{shen2018periodic})
	\begin{equation}\label{L2 ound of nabla wb}
	\Vert \nabla \Phi_i - \nabla x_i - \epsilon\nabla\chi^\epsilon_i \Vert_{L^{\infty}(\Omega)} \le C,
	\quad
	\Vert \nabla^2 \Phi_i \Vert_{L^{\infty}(\Omega)} \le C,
	\end{equation}
	and
	\begin{equation}\label{L^infty bound of Phi-x}
	\Vert \Phi_i - x_i \Vert_{L^\infty(\Omega)} \le C \epsilon \ln (\epsilon^{-1} + 1),
	\end{equation}
	where $C$ is independent of $\epsilon$.
\end{lemma}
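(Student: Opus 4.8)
The plan is to reduce everything to the classical theory of Neumann correctors for periodic elliptic operators, as developed in Shen's monograph \cite{shen2018periodic}, and then to extract the three stated bounds. First I would recall that the function $\Phi_i$ is exactly the Neumann corrector associated to the operator $\mathcal{A}_\epsilon = \bdiv(a^\epsilon\nabla\,\cdot\,)$: indeed, setting $N_i^\epsilon := \Phi_i - x_i$, the defining problem \eqref{define Phi} rewrites as the homogeneous-data Neumann problem $\bdiv(a^\epsilon\nabla N_i^\epsilon) = -\bdiv(a^\epsilon\nabla x_i) + \bdiv(a^0\nabla x_i)$ in $\Omega$ with $\partial N_i^\epsilon/\partial\boldsymbol{\nu}^\epsilon = (\boldsymbol{\nu}\cdot a^0\nabla - \boldsymbol{\nu}\cdot a^\epsilon\nabla)x_i$ on $\partial\Omega$. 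Both the interior source and the boundary datum are, up to the known $\epsilon$-scaling, the ``first-order'' oscillatory quantities $a^\epsilon - a^0$ acting on the affine function $x_i$, which is precisely the setting in which the corrector $N_i^\epsilon - \epsilon\chi_i^\epsilon$ is shown to be bounded.

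Next I would invoke the uniform Lipschitz (i.e. $W^{1,\infty}$) estimate for Neumann problems: under the smoothness hypotheses on $a$ and on $\partial\Omega$ collected in Assumption \ref{assumption}, the compactness/large-scale-regularity machinery of \cite{shen2018periodic} gives $\Vert \nabla N_i^\epsilon - \epsilon\nabla\chi_i^\epsilon\Vert_{L^\infty(\Omega)} \le C$ with $C$ independent of $\epsilon$, which is the first bound in \eqref{L2 ound of nabla wb} since $\nabla N_i^\epsilon = \nabla\Phi_i - \nabla x_i = \nabla\Phi_i - \be_i$. For the second bound $\Vert\nabla^2\Phi_i\Vert_{L^\infty(\Omega)}\le C$, I would differentiate: $\nabla^2\Phi_i = \nabla^2 N_i^\epsilon$, and combine the $C^{1,\mu}$ interior and boundary estimates for $N_i^\epsilon$ (again available because the data are smooth and $\partial\Omega$ is smooth, using that $\chi_i$ is smooth on $Y$ by elliptic regularity applied to \eqref{eqn:chi_j}) with the already-known boundedness of $\nabla N_i^\epsilon - \epsilon\nabla\chi_i^\epsilon$; the oscillatory piece $\epsilon\nabla^2\chi_i^\epsilon$ contributes $O(\epsilon^{-1}\cdot\epsilon)=O(1)$, so no blow-up occurs. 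Equivalently one may cite directly the fact that $\Phi_i$, solving a smooth-coefficient-free divergence-form equation with smooth data on a smooth domain, inherits $W^{2,\infty}$ bounds uniform in $\epsilon$ from the uniform $C^{1,\mu}$ estimate — this is stated essentially verbatim in \cite{shen2018periodic}.

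Finally, for \eqref{L^infty bound of Phi-x} I would estimate $\Vert N_i^\epsilon\Vert_{L^\infty(\Omega)} = \Vert\Phi_i - x_i\Vert_{L^\infty(\Omega)}$. Writing $N_i^\epsilon = \epsilon\chi_i^\epsilon + (N_i^\epsilon - \epsilon\chi_i^\epsilon)$, the first term is $O(\epsilon)$ since $\chi_i$ is bounded, so it suffices to bound $N_i^\epsilon - \epsilon\chi_i^\epsilon$ in $L^\infty$. This is the subtle point: although its gradient is $O(1)$ by the previous step, one cannot conclude an $O(\epsilon)$ sup-norm bound directly; the standard route is a duality/Green's-function argument (as in \cite{shen2018periodic}), representing $(N_i^\epsilon - \epsilon\chi_i^\epsilon)(\tilde{\bx})$ against the Neumann Green function of $\mathcal{A}_\epsilon$, normalizing by the assumed base point $\tilde{\bx}$ where $N_i^\epsilon(\tilde{\bx})=0$, and using the size estimates on the Green function together with the divergence-form structure of the remainder equation. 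The logarithm $\ln(\epsilon^{-1}+1)$ enters exactly from the borderline integrability of the Green function's gradient over the boundary layer of width $\epsilon$, matching the loss already seen in Lemma \ref{lemma: singula integral}. I expect this last $L^\infty$ estimate — the sharp $\epsilon\ln(\epsilon^{-1}+1)$ bound on the corrector itself — to be the main obstacle, since it is precisely where one must go beyond energy/Lipschitz estimates and use fine properties of the Neumann Green function; however, as all of this is by now classical for smooth data and smooth $\partial\Omega$, I would simply cite \cite{shen2018periodic} for the quantitative statement and record how it specializes to \eqref{define Phi}.
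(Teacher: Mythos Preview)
Your plan for the first inequality in \eqref{L2 ound of nabla wb} is fine and matches the paper: both simply invoke the uniform Lipschitz/corrector estimates from \cite{shen2018periodic}. But the other two pieces diverge from the paper in ways worth flagging.

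For $\Vert\nabla^2\Phi_i\Vert_{L^\infty}\le C$, your scaling argument is incorrect. Since $\chi_i^\epsilon(\bx)=\chi_i(\bx/\epsilon)$, one has $\epsilon\nabla^2\chi_i^\epsilon = \epsilon\cdot\epsilon^{-2}\nabla_y^2\chi_i(\bx/\epsilon) = O(\epsilon^{-1})$, not $O(1)$; so the decomposition $\nabla^2\Phi_i = \epsilon\nabla^2\chi_i^\epsilon + \nabla^2(\Phi_i - x_i - \epsilon\chi_i^\epsilon)$ cannot be closed term by term, and $C^{1,\mu}$ estimates on $N_i^\epsilon$ do not by themselves control second derivatives. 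The paper takes a completely different route: it writes $\Phi_i$ via the Neumann Green function $N^\epsilon(\bx,\bz)$ as a boundary integral (equation \eqref{representation of Phi}), observes that $\partial_{z_j}N^\epsilon(\bx,\bz)$ reduces to a \emph{tangential} derivative on $\partial\Omega$ because the conormal derivative of $N^\epsilon$ vanishes, and then integrates by parts along $\partial\Omega$ twice to move both $\bx$-derivatives onto the smooth factor $\nu_k a^0_{ki}$. The bound then follows from the size estimate on $N^\epsilon$ itself and the smoothness of $\partial\Omega$.

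For \eqref{L^infty bound of Phi-x} you anticipate a delicate Green-function duality argument as the ``main obstacle,'' but the paper's proof is in fact elementary once one uses the \emph{refined} pointwise estimate from \cite{shen2018periodic} (Lemma 7.4.5), namely $|\nabla\Phi_i - \nabla x_i - \epsilon\nabla\chi_i^\epsilon| \le C\max\{1,\epsilon\,[\mathrm{dist}(\bx,\partial\Omega)]^{-1}\}$, which is sharper than the mere $O(1)$ bound you cite. Using the normalization $\Phi_i(\tilde\bx)=\tilde x_i$, one integrates this along the segment from $\tilde\bx$ to $\bx$; the integral $\int_0^1 \max\{1,\epsilon(1-s)^{-1}\}\,\d s$ produces exactly the $\epsilon\ln(\epsilon^{-1}+1)$ factor. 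So the logarithm here comes from a one-dimensional line integral near the boundary, not from Green-function asymptotics.
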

\begin{proof}
	In fact, one has the estimate
	\begin{equation*}
	\big\vert\nabla \Phi_i - \nabla x_i - \epsilon\nabla\chi^\epsilon_i\big\vert
	\le C \max\{1,\epsilon [\mathrm{dist}(\bx,\partial\Omega)]^{-1}\}
	\end{equation*}
	from Lemma $7.4.5$ in \cite{shen2018periodic}. This, together with the fact $\Phi_i (\tilde{\bx}) - \tilde{\bx}= 0 $ , yields \eqref{L^infty bound of Phi-x} by following integrals:
	\begin{equation*}
		\begin{aligned}
		\abs{\Phi_i (\bx) - x_i} 
		=&
		\Big\vert \int_0^1 \frac{\d}{\d s} \Big\{\Phi_i \big(\tilde{\bx} + s(\bx-\tilde{\bx})\big) - \big(\tilde{x}_i + s (x_i - \tilde{x}_i)\big) \Big\} \d s \Big\vert\\
		\le & C
		\int_0^1 \max\{1,\epsilon (1-s)^{-1}\} \d s 
		\le  C \epsilon \ln (\epsilon^{-1} + 1),
		\end{aligned}
\end{equation*}
for any $\bx \in \Omega$.

As for the second inequality in \eqref{L2 ound of nabla wb}, we prove by making use of the Neumann function for operator $\mathcal{A}_\epsilon$ from \cite{shen2018periodic} Section 7.4, denoted by $N^\epsilon(\bx,\bz)$, and write from \eqref{define Phi} that
\begin{equation}\label{representation of Phi}
	\begin{aligned}
	\Phi_i(\bx)
	=&- \sum_{k=1}^{n}
	\int_{\partial\Omega} \nu_k \cdot a^0_{ki} N^\epsilon(\bx,\bz) \d\bz
	+ \frac{1}{\abs{\partial\Omega}}\int_{\partial\Omega}\Phi_i(\bz) \d \bz.
	\end{aligned}
\end{equation}
Let us denote the projection of $\frac{\partial}{\partial x_j}$ along $\frac{\partial}{\partial \boldsymbol{\nu}^\epsilon}$ by $\mathcal{P}_{x_j}$, and define $\mathcal{P}_{x_j}^{\bot} = \frac{\partial}{\partial \boldsymbol{\nu}^\epsilon} - \mathcal{P}_{x_j}$, one can write for $\bz\in \partial\Omega$
\begin{equation*}
	\frac{\partial}{\partial z_j} N^\epsilon(\bx,\bz)
	=
	\big( \mathcal{P}_{z_j} + \mathcal{P}_{z_j}^{\bot} \big)
	N^\epsilon(\bx,\bz)
	=
	\mathcal{P}_{z_j}^{\bot} 
	N^\epsilon(\bx,\bz).
\end{equation*}
Now applying $\frac{\partial^2}{\partial x_l\partial x_j}$
to both sides of \eqref{representation of Phi}, using above formula and integration by parts on $\partial\Omega$ lead to
\begin{equation}\label{representation of second derivative of Phi}
	\begin{aligned}
		\frac{\partial^2}{\partial x_l\partial x_j}\Phi_i(\bx)
		=&- \sum_{k=1}^{n}
		\int_{\partial\Omega} \mathcal{P}_{z_l}^{\bot}\mathcal{P}_{z_j}^{\bot}\nu_k(\bz) \cdot a^0_{ki} N^\epsilon(\bx,\bz) \d\bz,
	\end{aligned}
\end{equation}
here we have used the fact that $\mathcal{P}_{z_l}^{\bot}$ is a tangential derivative on the boundary, and $N^\epsilon(\bx,\bz) = N^\epsilon(\bz,\bx)$ by the symmetry of $\mathcal{A}_\epsilon$. 
\eqref{representation of second derivative of Phi} implies the second inequality in \eqref{L2 ound of nabla wb} by the smoothness assumption of boundary.
\end{proof}

\subsection{A high-order modification}
As noted in Section \ref{sec:introduction}, we use $\boldsymbol{\omega}_\mathrm{N}$ to control the Neumann boundary data, and use a modification function $\boldsymbol{\omega}_\mathrm{M}$ to control the inhomogeneous term that induced by $\boldsymbol{\omega}_\mathrm{N}$, written in \eqref{terms of boundary corrector} separately.
In order to explain the construction of the modification function, we point out that there are some bad terms appear when we calculate $\mathcal{L}^\epsilon \boldsymbol{\omega}_\mathrm{N}$, which have no convergence in $L^2$ norm. Denote the bad terms by $\mathcal{T}_{\mathrm{bad}}^1$ and $\mathcal{T}_{\mathrm{bad}}^2$, then they can be written as
	\begin{equation}\label{bad term 1}
		\begin{aligned}
			\mathcal{T}_{\mathrm{bad}}^1
			=&
			2 \sum_{i, j, k = 1}^n \frac{\partial}{\partial x_k} \Big\{a^\epsilon_{ki} 
			\big(\Phi_j - x_j - \epsilon\chi^\epsilon_j \big) \cdot
			\frac{\partial^2 \m_0}{\partial x_i \partial x_j} \Big\}\\
			& - 
			\sum_{i, j, k = 1}^n \Big\{ \frac{\partial}{\partial x_k} a^\epsilon_{ik} \cdot
			\big(\Phi_j - x_j - \epsilon\chi^\epsilon_j \big) \Big\}
			\frac{\partial^2 \m_0}{\partial x_i \partial x_j},
		\end{aligned}
	\end{equation}
and
\begin{equation*}
	\begin{gathered}
		\mathcal{T}_{\mathrm{bad}}^2
		=
		\alpha \sum_{i, j= 1}^n
		\big(a^\epsilon_{ij}
		\frac{\partial\boldsymbol{\omega}_\mathrm{N}}{\partial x_i}  
		\cdot \frac{\partial\{2\widetilde{\m}^\epsilon + \boldsymbol{\omega}_\mathrm{N}\}}{\partial x_j}  
		\big)\big(\widetilde{\m}^\epsilon + \boldsymbol{\omega}_\mathrm{N}\big).
	\end{gathered}
\end{equation*}
Notice that these terms cannot converge for the existence of $\frac{\partial\boldsymbol{\omega}_\mathrm{N}}{\partial x_i} $ and $\frac{\partial a^\epsilon_{ik}}{\partial x_k}$. 
Now let us rewrite $\mathcal{T}_{\mathrm{bad}}^1$ and $\mathcal{T}_{\mathrm{bad}}^2$ into divergence form up to a small term. For $\mathcal{T}_{\mathrm{bad}}^1$, notice that $\sum_{k = 1}^n \frac{\partial a^\epsilon_{ik}}{\partial x_k} = \mathcal{A}_\epsilon (\epsilon\chi^\epsilon_i)$ from \eqref{eqn:chi_j}, substitute it into the second term on the right-hand side of \eqref{bad term 1}, it leads to
\begin{equation}\label{difference of T_bad^1}
	\begin{aligned}
		\mathcal{T}_{\mathrm{bad}}^1
		=&
		\sum_{k, l = 1}^n \frac{\partial}{\partial x_k} \big(a^\epsilon_{kl} G^1_l(\bx)\big)\\
		& + 
		\sum_{i, j = 1}^n 
		\epsilon\chi^\epsilon_i \cdot
		\mathcal{A}_\epsilon \Big\{
		\big(\Phi_j - x_j - \epsilon\chi^\epsilon_j \big) \cdot
		\frac{\partial^2 \m_0}{\partial x_i \partial x_j}\Big\},
	\end{aligned}
\end{equation}
where $G^1_l(\bx)$ in the divergence term reads
\begin{align*}
	G^1_l(\bx)=
	&
	2 \sum_{ j = 1}^n  
	\big(\Phi_j - x_j - \epsilon\chi^\epsilon_j \big)
	\frac{\partial^2 \m_0}{\partial x_l \partial x_j}
	+ 
	\sum_{i, j = 1}^n
	\Big\{\frac{\partial}{\partial x_l} 
	( \epsilon\chi^\epsilon_i) \nn\\
	&\cdot
	\big(\Phi_j - x_j - \epsilon\chi^\epsilon_j \big) 
	- 
	 \epsilon\chi^\epsilon_i \cdot
	\frac{\partial}{\partial x_l}
	\big(\Phi_j - x_j - \epsilon\chi^\epsilon_j \big) \Big\}
	\frac{\partial^2 \m_0}{\partial x_i \partial x_j}.
\end{align*}
As for $\mathcal{T}_{\mathrm{bad}}^2$, a direct calculation implies it can be rewritten as
	\begin{align}\nn
	\mathcal{T}_{\mathrm{bad}}^2
	=&\sum_{k, l = 1}^n \frac{\partial}{\partial x_k} \big(a^\epsilon_{kl} G^2_l(\bx)\big)
	-
	\alpha \sum_{i, j= 1}^n
	a^\epsilon_{ij} \big(
	\boldsymbol{\omega}_\mathrm{N}
	\cdot \frac{\partial\{2\widetilde{\m}^\epsilon + \boldsymbol{\omega}_\mathrm{N}\}}{\partial x_j}  
	\big)
	\big(\widetilde{\m}^\epsilon + \boldsymbol{\omega}_\mathrm{N}\big)\\
	\label{difference of T_bad^2}
	& -
	\alpha \sum_{i, j= 1}^n
	\big(
	\boldsymbol{\omega}_\mathrm{N}
	\cdot \mathcal{A}_\epsilon\{2\widetilde{\m}^\epsilon + \boldsymbol{\omega}_\mathrm{N}\}
	\big)
	\big(\widetilde{\m}^\epsilon + \boldsymbol{\omega}_\mathrm{N}\big),
\end{align}
where $G^2_l(\bx)$ in the divergence term can be calculated by
\begin{equation*}
	\begin{aligned}
		G^2_l(\bx)=
		& \alpha
		\big(
		\boldsymbol{\omega}_\mathrm{N} 
		\cdot \frac{\partial\{2\widetilde{\m}^\epsilon + \boldsymbol{\omega}_\mathrm{N}\}}{\partial x_l}  
		\big)\big(\widetilde{\m}^\epsilon + \boldsymbol{\omega}_\mathrm{N}\big).
	\end{aligned}
\end{equation*}
Moreover, one can apply Lemma \ref{lemma: estimate of chi} to deduce from \eqref{difference of T_bad^1} and \eqref{difference of T_bad^2} that for $i=1,2$
\begin{equation}\label{difference of bad term}
	\begin{aligned}
		\big\Vert \mathcal{T}_{\mathrm{bad}}^i
		- \sum_{k, l = 1}^n \frac{\partial}{\partial x_k} \big(a^\epsilon_{kl} G^i_l(\bx)\big)
		 \big\Vert_{L^2(\Omega)}
		\le&
		C \epsilon [\ln (\epsilon^{-1} + 1)]^2,
	\end{aligned}
\end{equation}
here we have use the fact $\mathcal{A}_\epsilon (\Phi_i - x_i - \epsilon\chi^\epsilon_i ) = 0$. Constant $C$ depends on $\Vert \nabla \m_0 \Vert_{W^{2,\infty}(\Omega)}$, $\Vert \mathcal{A}_\epsilon \widetilde{\m}^\epsilon \Vert_{L^{\infty}(\Omega)}$, but is independent of $\epsilon$.

Now we define the modification function $\boldsymbol{\omega}_\mathrm{M} = \boldsymbol{\omega}_\mathrm{M}^1 + \boldsymbol{\omega}_\mathrm{M}^2$,
where $\boldsymbol{\omega}_\mathrm{M}^i$, $i=1,2$ satisfies
\begin{equation}\label{define tilde omega}
\left\{ \begin{aligned}
\mathcal{A}_\epsilon \boldsymbol{\omega}_\mathrm{M}^i
=&
\sum_{k, l = 1}^n \frac{\partial}{\partial x_k} \big(a^\epsilon_{kl} G^i_l(\bx)\big)
\quad \mbox{in $\Omega$},\\
\frac{\partial}{\partial \boldsymbol{\nu}^\epsilon} \boldsymbol{\omega}_\mathrm{M}^i
=&
\sum_{k, l = 1}^n
\nu^k \cdot a^\epsilon_{kl} 
G^i_l(\bx)
\quad \mbox{on $\partial\Omega$},
\end{aligned} \right.
\end{equation}
here $\nu^k$ is the $k$-th component of vector $\boldsymbol{\nu}$. By the Lax-Milgram theorem, one can obtain the existence and uniqueness of $\boldsymbol{\omega}_\mathrm{M}^i$, $i=1,2$ up to a constant. Let $\int_{\partial\Omega} \boldsymbol{\omega}_\mathrm{M}^i \d \bx = 0$, then the correctors yield the following estimate.
\begin{lemma}\label{lemma: estimate of theta and gamma}
	For $\boldsymbol{\omega}_\mathrm{M}^i$, $i=1,2$ defined in \eqref{define tilde omega}, under smooth assumption of $\m_0$ and $\partial \Omega$, it holds that for $n\le 3$
	\begin{equation}\label{estimate of omega}
	\Vert \boldsymbol{\omega}_\mathrm{M}^i \Vert_{L^\infty(\Omega)}
	\le C \epsilon, 
	\quad 
	\Vert \nabla \boldsymbol{\omega}_\mathrm{M}^i \Vert_{L^\infty(\Omega)}
	\le
	C \epsilon \ln (\epsilon^{-1} + 1),
	\end{equation}
	where $C$ depends on $\Vert \nabla \m_0 \Vert_{W^{3,\infty}(\Omega)}$ and is independent of $\epsilon$.
\end{lemma}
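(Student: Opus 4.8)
The plan is to treat each $\boldsymbol{\omega}_\mathrm{M}^i$ as the solution of a Neumann problem for $\mathcal{A}_\epsilon$ whose data $G^i_l(\bx)$ is already small, and to transfer the scaling $G^i_l = O(\epsilon)$ (in $L^\infty$) together with its obvious derivative bounds into pointwise estimates on $\boldsymbol{\omega}_\mathrm{M}^i$ and $\nabla\boldsymbol{\omega}_\mathrm{M}^i$ using the Neumann-function machinery of Shen~\cite{shen2018periodic} that was already invoked in Lemma~\ref{lemma: estimate of chi}. First I would record the size of the data: from the explicit formulae for $G^1_l$ and $G^2_l$, Lemma~\ref{lemma: estimate of chi} (in particular $\Vert \Phi_j - x_j\Vert_{L^\infty}\le C\epsilon\ln(\epsilon^{-1}+1)$, $\Vert \nabla\Phi_j-\nabla x_j-\epsilon\nabla\chi^\epsilon_j\Vert_{L^\infty}\le C$, and $\epsilon\Vert\nabla\chi^\epsilon_j\Vert_{L^\infty}\le C$), the smoothness of $\m_0$, and the boundedness of $\widetilde{\m}^\epsilon$, $\boldsymbol{\omega}_\mathrm{N}$, $\mathcal{A}_\epsilon\widetilde{\m}^\epsilon$, one gets $\Vert G^i_l\Vert_{L^\infty(\Omega)}\le C\epsilon\ln(\epsilon^{-1}+1)$ and $\Vert\nabla G^i_l\Vert_{L^\infty(\Omega)}\le C$ (the derivative hitting $\epsilon\chi^\epsilon$ or $\Phi_j-x_j-\epsilon\chi^\epsilon_j$ costs a factor $\epsilon^{-1}$, which cancels the $\epsilon$; this is why the $W^{3,\infty}$ norm of $\m_0$ enters).

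Next I would use the representation of $\boldsymbol{\omega}_\mathrm{M}^i$ via the Neumann function $N^\epsilon(\bx,\bz)$ of $\mathcal{A}_\epsilon$. Since the right-hand side is in divergence form, $\bdiv(a^\epsilon G^i)$, and the boundary flux is exactly $\boldsymbol{\nu}\cdot a^\epsilon G^i$, an integration by parts in the Green identity gives, up to an additive constant,
\begin{equation*}
	\boldsymbol{\omega}_\mathrm{M}^i(\bx)
	=
	\int_{\Omega} \nabla_\bz N^\epsilon(\bx,\bz)\cdot a^\epsilon(\bz) G^i(\bz)\,\d\bz,
\end{equation*}
so the boundary terms drop and only the interior integral of $\nabla_\bz N^\epsilon$ against $G^i$ survives. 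Using the pointwise bound $|\nabla_\bz N^\epsilon(\bx,\bz)|\le C|\bx-\bz|^{1-n}$ from~\cite{shen2018periodic}, together with $\Vert G^i\Vert_{L^\infty}\le C\epsilon\ln(\epsilon^{-1}+1)$, and the elementary fact $\int_\Omega |\bx-\bz|^{1-n}\,\d\bz\le C$, one obtains the crude bound $\Vert\boldsymbol{\omega}_\mathrm{M}^i\Vert_{L^\infty}\le C\epsilon\ln(\epsilon^{-1}+1)$. To upgrade this to the sharp $C\epsilon$ without the logarithm, I would not estimate $G^i$ by its sup norm directly but instead integrate by parts once more to move a derivative from $N^\epsilon$ onto $G^i$ on the bulk $\{\mathrm{dist}(\bx,\partial\Omega)\gtrsim\epsilon\}$ where $|\nabla G^i|=O(1)$, while on the $O(\epsilon)$-collar near $\partial\Omega$ one keeps $|\nabla_\bz N^\epsilon|\le C|\bx-\bz|^{1-n}$ and uses $\Vert G^i\Vert_{L^\infty(\Omega)}\le C\epsilon\ln(\epsilon^{-1}+1)$ against a layer of measure $O(\epsilon)$; the layer contribution is then $C\epsilon^2[\ln(\epsilon^{-1}+1)]\cdot$(a bounded singular integral), which is $o(\epsilon)$, and the bulk contribution is $C\epsilon\int_\Omega|\bx-\bz|^{2-n}\d\bz\le C\epsilon$. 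This gives the first estimate in~\eqref{estimate of omega}.

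For the gradient estimate I would differentiate the representation in $\bx$, obtaining $\nabla\boldsymbol{\omega}_\mathrm{M}^i(\bx)=\int_\Omega \nabla_\bx\nabla_\bz N^\epsilon(\bx,\bz)\cdot a^\epsilon G^i\,\d\bz$; the mixed second derivative of the Neumann function satisfies $|\nabla_\bx\nabla_\bz N^\epsilon(\bx,\bz)|\le C|\bx-\bz|^{-n}$, which is a genuine singular kernel, so I would split the integrand as $G^i(\bz)=\big(G^i(\bz)-G^i(\bx)\big)+G^i(\bx)$: the difference part is controlled by Lemma~\ref{lemma: singula integral} type reasoning (kernel $|\bx-\bz|^{-n}$ against a Lipschitz function with $|\nabla G^i|\le C$ gives a logarithm, times the amplitude $\epsilon\ln(\epsilon^{-1}+1)$, i.e. $C\epsilon[\ln(\epsilon^{-1}+1)]^2$, or more carefully $C\epsilon\ln(\epsilon^{-1}+1)$ if one exploits that $G^i$ is actually $O(\epsilon)$ in $L^\infty$ and only $O(1)$ in Lipschitz norm on a fixed scale), while the constant part $G^i(\bx)\int_\Omega \nabla_\bx\nabla_\bz N^\epsilon\,\d\bz$ is handled by moving the $\bz$-derivative to the boundary (the integral of a total derivative), leaving a boundary term of size $C\Vert G^i\Vert_{L^\infty}\le C\epsilon\ln(\epsilon^{-1}+1)$ against a bounded surface integral of $\nabla_\bx N^\epsilon$. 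Collecting these gives $\Vert\nabla\boldsymbol{\omega}_\mathrm{M}^i\Vert_{L^\infty}\le C\epsilon\ln(\epsilon^{-1}+1)$, as claimed. The main obstacle I anticipate is the bookkeeping near $\partial\Omega$: making sure the $\epsilon^{-1}$ produced by differentiating the oscillatory factors $\epsilon\chi^\epsilon$ is genuinely compensated, and that the singular-integral estimates for $\nabla_\bx\nabla_\bz N^\epsilon$ are applied with the correct amplitude so that no stray logarithm spoils the clean $C\epsilon$ bound for $\boldsymbol{\omega}_\mathrm{M}^i$ itself; the dimensional restriction $n\le 3$ enters precisely in keeping $\int_\Omega|\bx-\bz|^{2-n}\d\bz$ finite and in the Sobolev embeddings used to bound the lower-order pieces of $G^i$.
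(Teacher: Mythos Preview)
Your approach is essentially the paper's: represent $\boldsymbol{\omega}_\mathrm{M}^i$ via the Neumann function $N^\epsilon$, record the data bounds $\Vert G^i_l\Vert_{L^\infty}\le C\epsilon\ln(\epsilon^{-1}+1)$ and $\Vert\nabla G^i_l\Vert_{L^\infty}\le C$, and read off the estimates. The paper's proof is shorter than yours in two places. For $\Vert\boldsymbol{\omega}_\mathrm{M}^i\Vert_{L^\infty}$ it simply uses $|\nabla_\bz N^\epsilon|\le C|\bx-\bz|^{1-n}$ and stops at your ``crude bound'' $C\epsilon\ln(\epsilon^{-1}+1)$; in fact the paper's own proof only establishes this logarithmic bound, not the clean $C\epsilon$ written in the statement, and only the logarithmic bound is used downstream. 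For $\Vert\nabla\boldsymbol{\omega}_\mathrm{M}^i\Vert_{L^\infty}$ the paper just invokes Lemma~7.4.7 of Shen~\cite{shen2018periodic}, which gives directly
\[
\Vert\nabla\boldsymbol{\omega}_\mathrm{M}^i\Vert_{L^\infty}
\le C\ln(\epsilon^{-1}+1)\Vert G^i_l\Vert_{L^\infty}+C\epsilon\Vert\nabla G^i_l\Vert_{L^\infty};
\]
your singular-integral splitting is essentially an unpacking of that lemma.

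One genuine issue with your sharpening to $C\epsilon$: when you ``integrate by parts once more to move a derivative from $N^\epsilon$ onto $G^i$'', the divergence lands on $a^\epsilon G^i$, not on $G^i$ alone, producing a term $(\nabla a^\epsilon)\cdot G^i$ of size $\epsilon^{-1}\cdot\epsilon\ln(\epsilon^{-1}+1)=O(\ln(\epsilon^{-1}+1))$. Integrating this against $N^\epsilon$ does not yield $O(\epsilon)$, so the upgrade as written has a gap. Since the paper neither proves nor needs the $\ln$-free bound, you can safely drop that part and match the paper's argument.
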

\begin{proof}
	Here we use the Neumann function $N^\epsilon(\bx,\bz)$ for operator $\mathcal{A}_\epsilon$, see \cite{shen2018periodic} Section 7.4. \eqref{define tilde omega} implies for $i=1,2$
	\begin{equation*}
	\boldsymbol{\omega}_\mathrm{M}^i 
	= \sum_{k, l = 1}^n  
	\int_{\Omega} a^\epsilon_{kl} \frac{\partial}{\partial z_k}\big\{
	N^\epsilon(\bx,\bz) \big\} G^i_l(\bz) \d \bz.
	\end{equation*}
	Using the fact $\nabla_{\bz} N^\epsilon(\bx,\bz)\le C\abs{\bx-\bz}^{1-n}$, see  \cite{shen2018periodic} p.159, we can derive
	\begin{equation*}
	\Vert \boldsymbol{\omega}_\mathrm{M}^i  \Vert_{L^\infty(\Omega)} 
	\le C\Vert G^i_l \Vert_{L^\infty(\Omega)}
	\le 
	C\epsilon \ln (\epsilon^{-1} + 1).
	\end{equation*}
	As for the second inequality in \eqref{estimate of omega}, it follows from \cite{shen2018periodic}, Lemma 7.4.7:
	\begin{equation*}
	\Vert \nabla \boldsymbol{\omega}_\mathrm{M}^i  \Vert_{L^\infty(\Omega)} \le 
	C\ln (\epsilon^{-1} + 1)\Vert G^i_l \Vert_{L^\infty(\Omega)}
	+
	C\epsilon \Vert \nabla G^i_l \Vert_{L^\infty(\Omega)}
	\end{equation*}
	with the estimate 
	\begin{equation*}
		\Vert \nabla G^i_l \Vert_{L^\infty(\Omega)} \le C,
	\end{equation*}
	from Lemma \ref{lemma: estimate of chi}.
	Here constant $C$ depends on $\Vert \nabla \m_0 \Vert_{W^{3,\infty}(\Omega)}$, $\Vert \nabla (\Phi_j - x_j - \epsilon\chi^\epsilon_j) \Vert_{L^\infty(\Omega)}$, but is independent of $\epsilon$ by Lemma \ref{lemma: estimate of chi}.
\end{proof}

\subsection{Estimates of initial-boundary data}
\begin{theorem}\label{thm: Estimates of initial-boundary data}
	For $\e^\epsilon_{{\mathrm{b}}}$ given in \eqref{define e_b in intro}, with $\boldsymbol{\omega}_\mathrm{b} = \boldsymbol{\omega}_\mathrm{N}
	-\boldsymbol{\omega}_\mathrm{M}$ given in \eqref{define w_b}, under the smooth assumption, it holds that initial data of $\e^\epsilon_{{\mathrm{b}}}$ satisfies
	\begin{equation}\label{initial condition}
		\Vert \e^\epsilon_{{\mathrm{b}}}(\bx,0) \Vert_{H^{1}(\Omega)}\le C\epsilon \ln (\epsilon^{-1} + 1),
	\end{equation}
	where $C$ depends on $\Vert \nabla^2 \m_{\mathrm{init}}^0 \Vert_{H^{1}(\Omega)}$ and is independent of $\epsilon$.
	And for the boundary data, it holds that
\begin{align}\label{boundary estimate of e}
	&\Vert \frac{\partial}
	{\partial \boldsymbol{\nu}^\epsilon} \e^\epsilon_{{\mathrm{b}}} \Vert_{W^{1,\infty}(0,T; B^{-1/2, 2} (\partial\Omega))}
	\le C \epsilon \ln (\epsilon^{-1} + 1),
\end{align}
where $C$ depends on $\Vert \nabla^2 \m_0 \Vert_{W^{1,\infty}(0,T; B^{-1/2, 2} (\partial\Omega))}$ and is independent of $\epsilon$.
\end{theorem}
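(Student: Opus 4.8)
The plan is to reduce both bounds in Theorem~\ref{thm: Estimates of initial-boundary data} to the pointwise behaviour of the Neumann corrector recorded in Lemma~\ref{lemma: estimate of chi} and to the smallness of the modification $\boldsymbol{\omega}_\mathrm{M}$ from Lemma~\ref{lemma: estimate of theta and gamma}. The first step is the algebraic identity $\epsilon\m_1+\boldsymbol{\omega}_\mathrm{N}=\sum_{i}(\Phi_i-x_i)\,\partial_{x_i}\m_0$, obtained by cancelling the term $-\epsilon\chi^\epsilon_i$ in \eqref{define w_b} against $\epsilon\m_1$ from \eqref{eqn:sln m1}. Since $\e^\epsilon_{\mathrm{b}}=\m^\epsilon-\m_0-\epsilon\m_1-\epsilon^2\m_2-\boldsymbol{\omega}_\mathrm{N}+\boldsymbol{\omega}_\mathrm{M}$, this gives
\[
\e^\epsilon_{\mathrm{b}}=\Big(\m^\epsilon-\m_0-\sum_{i}(\Phi_i-x_i)\,\partial_{x_i}\m_0\Big)-\epsilon^2\m_2+\boldsymbol{\omega}_\mathrm{M},
\]
where the last two terms are already controlled: $\Vert\epsilon^2\m_2\Vert_{H^1(\Omega)}\le C\epsilon$ by the two-scale form of $\m_2$ and the chain rule \eqref{notation of chain rule}, and $\Vert\boldsymbol{\omega}_\mathrm{M}\Vert_{H^1(\Omega)}\le C\epsilon\ln(\epsilon^{-1}+1)$ by Lemma~\ref{lemma: estimate of theta and gamma}, the same bounds surviving after $\partial_t$ because all time dependence enters only through the smooth homogenized solution $\m_0$ of \eqref{eqn:homogenized LLG system}. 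Everything therefore reduces to the principal piece $\boldsymbol{w}^\epsilon:=\m^\epsilon-\m_0-\sum_{i}(\Phi_i-x_i)\,\partial_{x_i}\m_0$.

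For the initial datum I would put $t=0$, write $\m:=\m_{\mathrm{init}}^0$, and identify the boundary-value problem solved by $\boldsymbol{w}^\epsilon(\cdot,0)$. On $\partial\Omega$, combining $\frac{\partial}{\partial\boldsymbol{\nu}^\epsilon}\m_{\mathrm{init}}^\epsilon=0$ and $\frac{\partial}{\partial\boldsymbol{\nu}^\mathrm{h}}\m=0$ from \eqref{compatibility condition of initial data} with the defining Neumann condition $\frac{\partial}{\partial\boldsymbol{\nu}^\epsilon}\Phi_i=\frac{\partial}{\partial\boldsymbol{\nu}^\mathrm{h}}x_i$ of \eqref{define Phi}, the $O(1)$ boundary contributions cancel and one is left with $\frac{\partial}{\partial\boldsymbol{\nu}^\epsilon}\boldsymbol{w}^\epsilon=-\sum_{i}(\Phi_i-x_i)\,\boldsymbol{\nu}\cdot a^\epsilon\nabla\partial_{x_i}\m$, which is $O(\epsilon\ln(\epsilon^{-1}+1))$ in $L^\infty(\partial\Omega)$ by \eqref{L^infty bound of Phi-x}. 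In $\Omega$, assumption \eqref{initial data} lets $\mathcal{A}_\epsilon\m_{\mathrm{init}}^\epsilon$ be replaced by $\mathcal{A}_0\m$; writing $\Phi_i-x_i=\epsilon\chi^\epsilon_i+\psi_i$ with $\psi_i:=\Phi_i-x_i-\epsilon\chi^\epsilon_i$ (so $\mathcal{A}_\epsilon\psi_i=0$), the cell problem \eqref{eqn:chi_j} and the standard flux-corrector identity turn $\mathcal{A}_\epsilon\big(\m+\epsilon\sum_i\chi^\epsilon_i\partial_{x_i}\m\big)-\mathcal{A}_0\m$ into the divergence of an $O(\epsilon)$ vector field plus an $O(\epsilon)$ remainder, while $\mathcal{A}_\epsilon\psi_i=0$ forces $\mathcal{A}_\epsilon(\psi_i\partial_{x_i}\m)=\bdiv\!\big[\psi_i\,a^\epsilon\nabla\partial_{x_i}\m+(a^\epsilon\nabla\psi_i)\,\partial_{x_i}\m\big]$. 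Hence $\boldsymbol{w}^\epsilon(\cdot,0)$ solves $\mathcal{A}_\epsilon\boldsymbol{w}^\epsilon=\bdiv(\boldsymbol{g}^\epsilon)+\boldsymbol{h}^\epsilon$ with $\Vert\boldsymbol{h}^\epsilon\Vert_{L^2(\Omega)}\le C\epsilon$ and $\boldsymbol{g}^\epsilon$ the sum of a term of size $C\epsilon\ln(\epsilon^{-1}+1)$ (those carrying a factor $\psi_i$ or $\Phi_i-x_i$) and the term $(a^\epsilon\nabla\psi_i)\,\partial_{x_i}\m$, which is only $O(\epsilon^{1/2})$ in $L^2(\Omega)$. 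The decisive point in the energy estimate is that $a^\epsilon\nabla\psi_i$ is divergence free, so that after testing against $\boldsymbol{w}^\epsilon$ a further integration by parts rewrites $\int(a^\epsilon\nabla\psi_i)\,\partial_{x_i}\m\cdot\nabla\boldsymbol{w}^\epsilon$ as $\int\boldsymbol{w}^\epsilon\,(a^\epsilon\nabla\psi_i\cdot\nabla\partial_{x_i}\m)$ plus a boundary term carrying the tangential structure of $\frac{\partial}{\partial\boldsymbol{\nu}^\epsilon}\psi_i$; the first is bounded by $\Vert\boldsymbol{w}^\epsilon\Vert_{L^\infty(\Omega)}\Vert\nabla\psi_i\Vert_{L^1(\Omega)}\le C\epsilon\ln(\epsilon^{-1}+1)$, using the crude bound $\Vert\boldsymbol{w}^\epsilon\Vert_{L^\infty}\le C$ (from $\abs{\m_{\mathrm{init}}^\epsilon}=\abs{\m}=1$ and $\Vert\Phi_i-x_i\Vert_{L^\infty}\to0$) and the fact that $\nabla\psi_i$ is concentrated in the $\epsilon$-boundary layer, where it is $O(1)$, and decays like $\epsilon/\mathrm{dist}(\cdot,\partial\Omega)$ further inside, so that $\Vert\nabla\psi_i\Vert_{L^1(\Omega)}\le C\epsilon\ln(\epsilon^{-1}+1)$. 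Coupling this with the elementary bound $\Vert\boldsymbol{w}^\epsilon\Vert_{L^2(\Omega)}\le C\epsilon\ln(\epsilon^{-1}+1)$ (triangle inequality, the classical $L^2$ homogenization rate, and \eqref{L^infty bound of Phi-x}) and absorbing via Young's inequality yields $\Vert\boldsymbol{w}^\epsilon(\cdot,0)\Vert_{H^1(\Omega)}\le C\epsilon\ln(\epsilon^{-1}+1)$, which is \eqref{initial condition}.

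For the boundary datum I would compute $\frac{\partial}{\partial\boldsymbol{\nu}^\epsilon}\e^\epsilon_{\mathrm{b}}$ directly. Since $\widetilde{\m}^\epsilon+\boldsymbol{\omega}_\mathrm{N}=\m_0+\sum_{i}(\Phi_i-x_i)\,\partial_{x_i}\m_0+\epsilon^2\m_2$, the same cancellation as above — now using $\frac{\partial}{\partial\boldsymbol{\nu}^\mathrm{h}}\m_0=0$ from \eqref{eqn:homogenized LLG system} and $\frac{\partial}{\partial\boldsymbol{\nu}^\epsilon}(\Phi_i-x_i)=\sum_{k}\nu^k(a^0_{ki}-a^\epsilon_{ki})$ — together with $\frac{\partial}{\partial\boldsymbol{\nu}^\epsilon}\m^\epsilon=0$ from \eqref{eqn:LLG system} and the defining Neumann data of $\boldsymbol{\omega}_\mathrm{M}^i$ in \eqref{define tilde omega}, produces
\[
\frac{\partial}{\partial\boldsymbol{\nu}^\epsilon}\e^\epsilon_{\mathrm{b}}=-\sum_{i}(\Phi_i-x_i)\,\boldsymbol{\nu}\cdot a^\epsilon\nabla\partial_{x_i}\m_0-\epsilon^2\,\boldsymbol{\nu}\cdot a^\epsilon\nabla\m_2+\sum_{i}\sum_{k,l}\nu^k a^\epsilon_{kl}\,G^i_l,
\]
and every term on the right is $O(\epsilon\ln(\epsilon^{-1}+1))$ in $B^{-1/2,2}(\partial\Omega)$: the first by \eqref{L^infty bound of Phi-x} together with the $L^2(\partial\Omega)$ bound on the trace of $\nabla^2\m_0$, the second because $\epsilon^2\nabla\m_2=\epsilon^2\nabla_{\bx}\m_2+\epsilon\nabla_{\by}\m_2$, and the third because each $G^i_l$ carries a factor $\Phi_j-x_j-\epsilon\chi^\epsilon_j$ or $\epsilon\chi^\epsilon_j$. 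Differentiating in $t$ annihilates the $\m^\epsilon$ contribution, since $\boldsymbol{\nu}\cdot a^\epsilon\nabla\m^\epsilon\equiv\boldsymbol{0}$ in $B^{-1/2,2}(\partial\Omega)$ for a.e.\ $t$, and acts only through the smooth factor $\m_0$ on the remaining terms, which gives \eqref{boundary estimate of e} with the constant controlled by $\Vert\nabla^2\m_0\Vert_{W^{1,\infty}(0,T;B^{-1/2,2}(\partial\Omega))}$.

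The step I expect to be the genuine obstacle is the $H^1(\Omega)$ bound on $\boldsymbol{w}^\epsilon(\cdot,0)$: a naive energy estimate only delivers the suboptimal rate $\epsilon^{1/2}$, because $\nabla\psi_i$ is merely $O(\epsilon^{1/2})$ in $L^2(\Omega)$; recovering $\epsilon\ln(\epsilon^{-1}+1)$ forces one to use simultaneously the divergence-free structure of $a^\epsilon\nabla\psi_i$ (for the extra integration by parts) and the $L^1$ integrability of $\nabla\psi_i$ over the boundary layer — and it is exactly here that assumption \eqref{initial data} is indispensable, since it removes the otherwise $O(1)$ interior source $\mathcal{A}_0\m-\mathcal{A}_\epsilon\m$ that would by itself already destroy the rate.
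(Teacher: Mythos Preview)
Your decomposition $\e^\epsilon_{\mathrm{b}}=\boldsymbol{w}^\epsilon-\epsilon^2\m_2+\boldsymbol{\omega}_\mathrm{M}$ with $\boldsymbol{w}^\epsilon=\m^\epsilon-\m_0-\sum_i(\Phi_i-x_i)\partial_{x_i}\m_0$ is exactly what the paper uses (its identity \eqref{rewrite e_b}), and your treatment of the boundary datum is essentially identical to the paper's: the paper also exploits the cancellation $\sum_k\frac{\partial}{\partial\boldsymbol{\nu}^\epsilon}(\Phi_k-x_k)\,\partial_{x_k}\m_0=-\frac{\partial}{\partial\boldsymbol{\nu}^\epsilon}\m_0$ on $\partial\Omega$ and arrives at the same three residual terms, then bounds the $\boldsymbol{\omega}_\mathrm{M}$ contribution via $\Vert\nabla\boldsymbol{\omega}_\mathrm{M}\Vert_{L^\infty}$ from Lemma~\ref{lemma: estimate of theta and gamma} rather than via the explicit Neumann data $\sum_{k,l}\nu^k a^\epsilon_{kl}G^i_l$, but this is cosmetic.

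For the initial data, however, your energy argument has a genuine gap that lands you back at the suboptimal rate you set out to avoid. After your further integration by parts the energy inequality reads, schematically,
\[
\int_\Omega a^\epsilon\nabla\boldsymbol{w}^\epsilon\cdot\nabla\boldsymbol{w}^\epsilon\,\d\bx\ \le\ C\big(\epsilon\ln(\epsilon^{-1}+1)\big)^2+\Big|\int_\Omega\boldsymbol{w}^\epsilon\cdot(a^\epsilon\nabla\psi_i\cdot\nabla\partial_{x_i}\m)\,\d\bx\Big|+\text{(boundary)},
\]
and you bound the middle term by $\Vert\boldsymbol{w}^\epsilon\Vert_{L^\infty}\Vert\nabla\psi_i\Vert_{L^1}\le C\cdot C\epsilon\ln(\epsilon^{-1}+1)$. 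But this contribution is of order $\epsilon\ln(\epsilon^{-1}+1)$, not $(\epsilon\ln(\epsilon^{-1}+1))^2$, so the inequality only yields $\Vert\nabla\boldsymbol{w}^\epsilon\Vert_{L^2}^2\le C\epsilon\ln(\epsilon^{-1}+1)$, i.e.\ $\Vert\nabla\boldsymbol{w}^\epsilon\Vert_{L^2}\le C\epsilon^{1/2}\ln^{1/2}(\epsilon^{-1}+1)$. Replacing the $L^\infty$--$L^1$ pairing by $L^2$--$L^2$ does not help either, since $\Vert\nabla\psi_i\Vert_{L^2}$ is only $O(\epsilon^{1/2})$. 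The paper sidesteps this entirely: under assumptions \eqref{compatibility condition of initial data}--\eqref{initial data} it recognises $\m_{\mathrm{init}}^0$ as the homogenized solution of $\m_{\mathrm{init}}^\epsilon$ and simply quotes the sharp Neumann homogenization rate
\[
\Vert\m_{\mathrm{init}}^\epsilon-\m_{\mathrm{init}}^0-(\boldsymbol{\Phi}-\boldsymbol{x})\nabla\m_{\mathrm{init}}^0\Vert_{H^1(\Omega)}\le C\epsilon\ln(\epsilon^{-1}+1)
\]
as a known result from \cite{shen2018periodic}, whose proof there proceeds via pointwise Neumann-function estimates rather than an energy method. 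If you want to salvage your direct route, you would need the a~priori bound $\Vert\boldsymbol{w}^\epsilon\Vert_{L^\infty}\le C\epsilon\ln(\epsilon^{-1}+1)$ in place of the crude $O(1)$ bound---but that $L^\infty$ estimate is already of the same depth as the $H^1$ result you are after.
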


\begin{proof}
We rewrite $\e^\epsilon_{{\mathrm{b}}}$ from its definition as
\begin{equation}\label{rewrite e_b}
	\e^\epsilon_{{\mathrm{b}}} 
	= 
	\m^\epsilon - \m_0 - \sum_{i=1}^n(\Phi_i - x_i )\frac{\partial\m_0}{\partial x_i} - \epsilon^2\m_2 + \boldsymbol{\omega}_\mathrm{M}.
\end{equation}
First, let us prove \eqref{initial condition}. By the initial condition of $\m^\epsilon$ and $\m_0$, along with the smoothness condition, one can check
\begin{equation}\label{initial data of eb}
	\begin{aligned}
		\e^\epsilon_{{\mathrm{b}}}(\bx,0)
		= 
		\m_{\mathrm{init}}^\epsilon - \m_{\mathrm{init}}^0 - \sum_{i=1}^n(\Phi_i - x_i )\frac{\partial\m_{\mathrm{init}}^0}{\partial x_i} 
		-
		\epsilon^2\m_{2,\mathrm{init}}
		+ \boldsymbol{\omega}_\mathrm{M,init}
	\end{aligned}
\end{equation}
with notation $\m_{2,\mathrm{init}}$, $\boldsymbol{\omega}_\mathrm{M,init}$ defined the same as $\m_{2}$, $\boldsymbol{\omega}_\mathrm{M}$ except we replace $\m_0$ by $\m_{\mathrm{init}}^0$.
From the assumption \eqref{compatibility condition of initial data}-\eqref{initial data}, $\m_{\mathrm{init}}^0$ is the homogenized solution of $\m_{\mathrm{init}}^\epsilon$, by classical homogenization theorem of elliptic problems in \cite{shen2018periodic}, one has
\begin{equation*}
	\Vert \m_{\mathrm{init}}^\epsilon - \m_{\mathrm{init}}^0 - (\boldsymbol{\Phi} - \boldsymbol{x} )\nabla\m_{\mathrm{init}}^0 \Vert_{H^{1}(\Omega)}\le C\epsilon\ln (\epsilon^{-1} + 1).
\end{equation*}
Also note that by definition of $\m_2$ and Lemma \ref{lemma: estimate of theta and gamma}, one has
\begin{equation*}
	\Vert\epsilon^2 \m_{2,\mathrm{init}} \Vert_{H^{1}(\Omega)}
	+
	\Vert \boldsymbol{\omega}_\mathrm{M,init} \Vert_{H^{1}(\Omega)}
	\le C \epsilon\ln (\epsilon^{-1} + 1) \Vert \nabla^2 \m_{\mathrm{init}}^0 \Vert_{H^{1}(\Omega)}.
\end{equation*}
Therefore the inequality \eqref{initial condition} follows from \eqref{initial data of eb} and above estimates.

Notice that by the boundary condition of $\m_0$ in \eqref{eqn:homogenized LLG system} and $\Phi_k$ in \eqref{define Phi}, we have 
\begin{equation*}
	\begin{aligned}
		\sum_{k=1}^n \frac{\partial}
		{\partial \boldsymbol{\nu}^\epsilon}(\Phi_k - x_k ) \cdot \frac{\partial\m_0}{\partial x_k}
		=&
		- \frac{\partial}
		{\partial \boldsymbol{\nu}^\epsilon} \m_0 ,
		\quad \bx\in \partial\Omega,
	\end{aligned}
\end{equation*}
therefore applying $\frac{\partial}{\partial \boldsymbol{\nu}^\epsilon}$ to both sides of \eqref{rewrite e_b} leads to
\begin{equation}\label{boundary of e_b}
	\begin{aligned}
		\frac{\partial}
		{\partial \boldsymbol{\nu}^\epsilon}  \e^\epsilon_{{\mathrm{b}}}
		=&
		- \sum_{k=1}^n (\Phi_k - x_k ) \cdot 
		\frac{\partial}
		{\partial \boldsymbol{\nu}^\epsilon} (\frac{\partial\m_0}{\partial x_k})
		- \epsilon^2
		\frac{\partial}
		{\partial \boldsymbol{\nu}^\epsilon} \m_2
		+
		\frac{\partial}
		{\partial \boldsymbol{\nu}^\epsilon} \boldsymbol{\omega}_\mathrm{M} ,
		\quad \bx\in \partial\Omega.
	\end{aligned}
\end{equation}
Under the smoothness assumption of $\m_0$ and $a^\epsilon$, we can also derive the smoothness of $(\boldsymbol{\Phi} - \boldsymbol{x})$, $\m_2$ and $\boldsymbol{\omega}_\mathrm{M}$ over $\bar{\Omega}$.
Thus by Lemma \ref{lemma: estimate of chi} and Lemma \ref{lemma: estimate of theta and gamma}, one can directly obtain from \eqref{boundary of e_b}
\begin{align*}
	\Vert \frac{\partial}
	{\partial \boldsymbol{\nu}^\epsilon} \e^\epsilon_{{\mathrm{b}}} \Vert_{ B^{-1/2, 2} (\partial\Omega)}
	\le &
	C \Vert \boldsymbol{\Phi} - \boldsymbol{x} \Vert_{L^\infty(\Omega)}
	+
	C\epsilon^2 \Vert \nabla \m_2 \Vert_{L^{\infty} (\Omega)}
	+ 
	C\Vert \nabla \boldsymbol{\omega}_\mathrm{M} \Vert_{L^{\infty} (\Omega)}\\
	\le &C \epsilon \ln (\epsilon^{-1} + 1),
\end{align*}
where $C$ depends on $\Vert \nabla^2 \m_0 \Vert_{B^{-1/2, 2} (\partial\Omega)}$ and $\Vert \nabla^2 (\partial_t \m_0) \Vert_{B^{-1/2, 2} (\partial\Omega)}$. The same argument for $\Vert \frac{\partial}
{\partial \boldsymbol{\nu}^\epsilon} (\partial_t \e^\epsilon_{{\mathrm{b}}}) \Vert_{ B^{-1/2, 2} (\partial\Omega)}$
leads to \eqref{boundary estimate of e}.


\end{proof}

\subsection{Estimates of inhomogeneous terms}
From the above definition and property, we get the main result of this section.
\begin{theorem}\label{theorem:omega}
	For $\e^\epsilon_{{\mathrm{b}}}$ given in \eqref{define e_b in intro}, with $\boldsymbol{\omega}_\mathrm{b} = \boldsymbol{\omega}_\mathrm{N}
	-\boldsymbol{\omega}_\mathrm{M}$ given in \eqref{define w_b} and $\mathcal{L}^\epsilon$ defined in \eqref{def L eps}, under the smooth assumption, it holds that
	\begin{gather}
		\label{estimate of partial_t w_b}
		\Vert \partial_t \boldsymbol{\omega}_{{\mathrm{b}}} \Vert_{L^{2}(\Omega)}
		\le C \epsilon \ln (\epsilon^{-1} + 1),\\
	\label{estimate of L w_b}
	\Vert \mathcal{L}^\epsilon\boldsymbol{\omega}_\mathrm{b}
	\Vert_{L^2(\Omega)}
	\le 
	C \epsilon [\ln (\epsilon^{-1} + 1)]^2
	+
	C\Vert \m^\epsilon - \widetilde{\m}^\epsilon - \boldsymbol{\omega}_\mathrm{b} \Vert_{H^{1}(\Omega)},
	\end{gather}
where $C$ depends on $
\Vert \m^\epsilon \Vert_{H^{1}(\Omega)}$, $\Vert \nabla^2 \m_0  \Vert_{W^{2,\infty}(\Omega)}$, $\Vert \nabla (\partial_t \m_0) \Vert_{W^{1,\infty}(\Omega)}$ and is independent of $\epsilon$.
	Moreover, one has the estimate
	\begin{equation}\label{estimate of A e_b}
	\Vert \mathcal{A}_\epsilon \e^\epsilon_{{\mathrm{b}}} \Vert_{L^{2}(\Omega)}
	\le C \ln (\epsilon^{-1} + 1),
	\end{equation}
	where $C$ depends on $
	\Vert \mathcal{A}_\epsilon \m^\epsilon \Vert_{L^{2}(\Omega)}$, $\Vert \nabla^2 \m_0  \Vert_{W^{2,\infty}(\Omega)}$, $\Vert \nabla (\partial_t \m_0) \Vert_{W^{1,\infty}(\Omega)}$ and is independent of $\epsilon$.
\end{theorem}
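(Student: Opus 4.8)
The plan is to establish the three bounds one at a time, in each case splitting $\boldsymbol{\omega}_\mathrm{b}=\boldsymbol{\omega}_\mathrm{N}-\boldsymbol{\omega}_\mathrm{M}$ and invoking the sharp pointwise estimates of Lemma~\ref{lemma: estimate of chi} for $\Phi_i-x_i-\epsilon\chi^\epsilon_i$ and of Lemma~\ref{lemma: estimate of theta and gamma} for $\boldsymbol{\omega}_\mathrm{M}^i$. For \eqref{estimate of partial_t w_b} I would first observe that $\Phi_i$ and $\chi_i$ are time-independent, so $\partial_t\boldsymbol{\omega}_\mathrm{N}=\sum_i(\Phi_i-x_i-\epsilon\chi^\epsilon_i)\,\partial_{x_i}(\partial_t\m_0)$ has exactly the structure of $\boldsymbol{\omega}_\mathrm{N}$ itself; since $\Vert\chi_i\Vert_{L^\infty(Y)}\le C$ and $\Vert\Phi_i-x_i\Vert_{L^\infty(\Omega)}\le C\epsilon\ln(\epsilon^{-1}+1)$ by Lemma~\ref{lemma: estimate of chi}, H\"older's inequality and the smoothness of $\partial_t\m_0$ give $\Vert\partial_t\boldsymbol{\omega}_\mathrm{N}\Vert_{L^2(\Omega)}\le C\epsilon\ln(\epsilon^{-1}+1)$. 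The modification $\boldsymbol{\omega}_\mathrm{M}^i$ depends on $t$ only through the data $G^i_l$, which is built from $\m_0$ (and, via $\widetilde{\m}^\epsilon$ and $\boldsymbol{\omega}_\mathrm{N}$, again from $\m_0$); differentiating the Neumann problem \eqref{define tilde omega} in $t$ shows that $\partial_t\boldsymbol{\omega}_\mathrm{M}^i$ solves the same problem with $G^i_l$ replaced by $\partial_t G^i_l$, and $\partial_t G^i_l$ obeys the same $L^\infty$-bounds as $G^i_l$ with $\nabla^k\m_0$ replaced by $\nabla^k\partial_t\m_0$; the Neumann-function argument of Lemma~\ref{lemma: estimate of theta and gamma} then yields $\Vert\partial_t\boldsymbol{\omega}_\mathrm{M}\Vert_{L^\infty(\Omega)}\le C\epsilon$. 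Adding the two contributions gives \eqref{estimate of partial_t w_b}.

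For \eqref{estimate of L w_b}, since $\mathcal{L}^\epsilon$ is linear I would write $\mathcal{L}^\epsilon\boldsymbol{\omega}_\mathrm{b}=\mathcal{L}^\epsilon\boldsymbol{\omega}_\mathrm{N}-\mathcal{L}^\epsilon\boldsymbol{\omega}_\mathrm{M}$ and use the decomposition $\mathcal{L}^\epsilon=\alpha\widetilde{\mathcal{H}}^\epsilon_e-\mathbf{D}_1-\mathbf{D}_2$ from \eqref{def L eps}. Substituting $\boldsymbol{\omega}_\mathrm{N}$ from \eqref{define w_b}, using the cell equation \eqref{eqn:chi_j}, the identity $\mathcal{A}_\epsilon(\Phi_i-x_i-\epsilon\chi^\epsilon_i)=0$ together with $\sum_k\partial_{x_k}a^\epsilon_{ik}=\mathcal{A}_\epsilon(\epsilon\chi^\epsilon_i)$, and Lemma~\ref{lemma: estimate of chi}, I would group the result into: (i) terms already $\mathcal{O}(\epsilon[\ln(\epsilon^{-1}+1)]^2)$ in $L^2(\Omega)$; (ii) terms of the form $(\text{an }L^\infty\text{-bounded function})\times\nabla(\m^\epsilon-\widetilde{\m}^\epsilon-\boldsymbol{\omega}_\mathrm{b})$, produced by $\mathbf{D}_1,\mathbf{D}_2$ once $\nabla\m^\epsilon$ is split as $\nabla\widetilde{\m}^\epsilon+\nabla\boldsymbol{\omega}_\mathrm{b}+\nabla\e^\epsilon_\mathrm{b}$, contributing $C\Vert\m^\epsilon-\widetilde{\m}^\epsilon-\boldsymbol{\omega}_\mathrm{b}\Vert_{H^1(\Omega)}$; and (iii) the non-convergent ``bad'' terms $\mathcal{T}_{\mathrm{bad}}^1+\mathcal{T}_{\mathrm{bad}}^2$ of \eqref{bad term 1}. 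By construction \eqref{define tilde omega} the leading part of $\mathcal{L}^\epsilon\boldsymbol{\omega}_\mathrm{M}^i$ is $\mathcal{A}_\epsilon\boldsymbol{\omega}_\mathrm{M}^i=\sum_{k,l}\partial_{x_k}(a^\epsilon_{kl}G^i_l)$ (composed with the operator $\alpha I-\m^\epsilon\times$ coming from $\alpha\widetilde{\mathcal{H}}^\epsilon_e$ and $-\mathbf{D}_1$, which passes through the cancellation since the same operator multiplies the corresponding term in $\mathcal{L}^\epsilon\boldsymbol{\omega}_\mathrm{N}$), and \eqref{difference of bad term} states that $\mathcal{T}_{\mathrm{bad}}^i$ coincides with $\sum_{k,l}\partial_{x_k}(a^\epsilon_{kl}G^i_l)$ up to an $\mathcal{O}(\epsilon[\ln(\epsilon^{-1}+1)]^2)$ error in $L^2$. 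Hence the bad terms are removed by $\mathcal{L}^\epsilon\boldsymbol{\omega}_\mathrm{M}$ to leading order; the residual pieces of $\mathcal{L}^\epsilon\boldsymbol{\omega}_\mathrm{M}$ --- the lower-order part of $\widetilde{\mathcal{H}}^\epsilon_e(\boldsymbol{\omega}_\mathrm{M})$ and the terms $\mathbf{D}_1(\boldsymbol{\omega}_\mathrm{M}),\mathbf{D}_2(\boldsymbol{\omega}_\mathrm{M})$ --- involve $\Vert\boldsymbol{\omega}_\mathrm{M}\Vert_{L^\infty(\Omega)}$ and $\Vert\nabla\boldsymbol{\omega}_\mathrm{M}\Vert_{L^\infty(\Omega)}$, both $\le C\epsilon\ln(\epsilon^{-1}+1)$ by Lemma~\ref{lemma: estimate of theta and gamma}, multiplied by $H^1$- or $L^\infty$-norms of $\m^\epsilon$ and $\widetilde{\m}^\epsilon$. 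Collecting the three groups yields \eqref{estimate of L w_b}.

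For \eqref{estimate of A e_b} I would use the representation \eqref{rewrite e_b}, namely $\e^\epsilon_\mathrm{b}=\m^\epsilon-\m_0-\sum_i(\Phi_i-x_i)\partial_{x_i}\m_0-\epsilon^2\m_2+\boldsymbol{\omega}_\mathrm{M}$, and bound $\mathcal{A}_\epsilon$ of each summand. The term $\mathcal{A}_\epsilon\m^\epsilon$ is precisely the quantity admitted into the constant. Writing $\m_0+\sum_i(\Phi_i-x_i)\partial_{x_i}\m_0=\big(\m_0+\epsilon\sum_i\chi^\epsilon_i\partial_{x_i}\m_0\big)+\boldsymbol{\omega}_\mathrm{N}$, the classical flux-corrector identity gives $\mathcal{A}_\epsilon\big(\m_0+\epsilon\sum_i\chi^\epsilon_i\partial_{x_i}\m_0\big)=\mathrm{div}(a^0\nabla\m_0)+(\text{an }L^\infty\text{-bounded term})$ by smoothness of $\m_0$ (here one uses that the oscillatory corrector matrix is divergence-free over $Y$, so the dangerous $\epsilon^{-1}$ factor cancels), while $\mathcal{A}_\epsilon\boldsymbol{\omega}_\mathrm{N}$, computed via $\mathcal{A}_\epsilon(\Phi_i-x_i-\epsilon\chi^\epsilon_i)=0$ and Lemma~\ref{lemma: estimate of chi}, is bounded by $C\ln(\epsilon^{-1}+1)$ in $L^2(\Omega)$, the logarithm arising from the term $(\Phi_i-x_i-\epsilon\chi^\epsilon_i)\,\mathcal{A}_\epsilon(\partial_{x_i}\m_0)$, a product of an $\mathcal{O}(\epsilon\ln(\epsilon^{-1}+1))$ factor and an $\mathcal{O}(\epsilon^{-1})$ one. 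The term $\epsilon^2\mathcal{A}_\epsilon\m_2=(\mathcal{A}_0+\epsilon\mathcal{A}_1+\epsilon^2\mathcal{A}_2)\m_2$ is $L^\infty$-bounded by smoothness, and $\mathcal{A}_\epsilon\boldsymbol{\omega}_\mathrm{M}=\sum_{k,l}\partial_{x_k}(a^\epsilon_{kl}G^i_l)$ is bounded by $C\ln(\epsilon^{-1}+1)$ in $L^2(\Omega)$ using $\Vert G^i_l\Vert_{L^\infty}\le C\epsilon\ln(\epsilon^{-1}+1)$, $\Vert\nabla G^i_l\Vert_{L^\infty}\le C$ from the proof of Lemma~\ref{lemma: estimate of theta and gamma} together with $\Vert\partial_{x_k}a^\epsilon_{kl}\Vert_{L^\infty}\le C\epsilon^{-1}$. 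Summing these bounds gives \eqref{estimate of A e_b}.

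The main obstacle is the bookkeeping behind group (iii) in the proof of \eqref{estimate of L w_b}: one must identify exactly which contributions to $\mathcal{L}^\epsilon\boldsymbol{\omega}_\mathrm{N}$ fail to converge --- they appear simultaneously in the exchange term $\alpha\,\mathrm{div}(a^\epsilon\nabla\boldsymbol{\omega}_\mathrm{N})$, in the precession term $\mathbf{D}_1$, and in the degeneracy term $\mathbf{D}_2$ --- and then verify, after using $\mathcal{A}_\epsilon(\Phi_i-x_i-\epsilon\chi^\epsilon_i)=0$ and $\sum_k\partial_{x_k}a^\epsilon_{ik}=\mathcal{A}_\epsilon(\epsilon\chi^\epsilon_i)$, that each of them can be written, modulo $\mathcal{O}(\epsilon[\ln(\epsilon^{-1}+1)]^2)$, in the divergence form $\sum_{k,l}\partial_{x_k}(a^\epsilon_{kl}G^i_l)$ --- this is the content of \eqref{difference of bad term} --- so that $\boldsymbol{\omega}_\mathrm{M}$ defined through \eqref{define tilde omega} annihilates them. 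The delicacy is that the dangerous $\epsilon^{-1}$ factors (from $\nabla a^\epsilon$ and $\nabla\chi^\epsilon$) and the logarithmic factor (from $\Phi_i-x_i$) must be matched in the right combinations, and the cancellation is not purely at the level of the elliptic operator $\mathcal{A}_\epsilon$ but must survive the multiplications by $\m^\epsilon$ and by $\widetilde{\m}^\epsilon+\boldsymbol{\omega}_\mathrm{N}$ that occur in the nonlinear terms; keeping track of the resulting extra tangential structure (e.g.\ that certain bad vectors are, up to controlled errors, parallel to $\m^\epsilon$) is what makes the computation lengthy. A secondary point is the justification of the time-differentiated Neumann estimate in the first step, where the dependence of $G^i_l$ on $\widetilde{\m}^\epsilon$ forces one to control $\partial_t\m_0$ and its derivatives by the smooth norms assumed on $\m_0$.
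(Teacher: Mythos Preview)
Your proposal is essentially correct and follows the same strategy as the paper: for \eqref{estimate of partial_t w_b} you time-differentiate and reuse the $L^\infty$ bounds of Lemmas~\ref{lemma: estimate of chi}--\ref{lemma: estimate of theta and gamma}; for \eqref{estimate of L w_b} you isolate the non-convergent contributions $\mathcal{T}^1_{\mathrm{bad}},\mathcal{T}^2_{\mathrm{bad}}$ and cancel them against $\mathcal{A}_\epsilon\boldsymbol{\omega}_\mathrm{M}^i$ via \eqref{difference of bad term}; and for \eqref{estimate of A e_b} you expand via \eqref{rewrite e_b}, which is a slightly more explicit version of the paper's triangle-inequality argument $\Vert\mathcal{A}_\epsilon\e^\epsilon_\mathrm{b}\Vert\le\Vert\mathcal{A}_\epsilon\m^\epsilon\Vert+\Vert\mathcal{A}_\epsilon\widetilde{\m}^\epsilon\Vert+\Vert\mathcal{A}_\epsilon\boldsymbol{\omega}_\mathrm{b}\Vert$.

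One point deserves sharpening. You write that the prefactor $\alpha I-\m^\epsilon\times$ ``passes through the cancellation since the same operator multiplies the corresponding term in $\mathcal{L}^\epsilon\boldsymbol{\omega}_\mathrm{N}$'', but this is only true for $i=1$: $\mathcal{T}^1_{\mathrm{bad}}$ does arise from $\alpha\mathcal{A}_\epsilon\boldsymbol{\omega}_\mathrm{N}$ and $-\m^\epsilon\times\mathcal{A}_\epsilon\boldsymbol{\omega}_\mathrm{N}$, so it is paired with $(\alpha I-\m^\epsilon\times)\mathcal{A}_\epsilon\boldsymbol{\omega}_\mathrm{M}^1$. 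By contrast $\mathcal{T}^2_{\mathrm{bad}}$ comes from $-\mathbf{D}_2(\boldsymbol{\omega}_\mathrm{N})$ with no such prefactor, and in the paper it is paired directly with $\mathcal{A}_\epsilon\boldsymbol{\omega}_\mathrm{M}^2$; the leftover pieces $(\alpha-1)\mathcal{A}_\epsilon\boldsymbol{\omega}_\mathrm{M}^2$ and $\m^\epsilon\times\mathcal{A}_\epsilon\boldsymbol{\omega}_\mathrm{M}^2$ are then handled using that $\mathcal{T}^2_{\mathrm{bad}}$ (and hence $\mathcal{A}_\epsilon\boldsymbol{\omega}_\mathrm{M}^2$, up to \eqref{difference of bad term}) is a scalar multiple of $\widetilde{\m}^\epsilon+\boldsymbol{\omega}_\mathrm{N}=\m^\epsilon-\e^\epsilon_\mathrm{b}+\boldsymbol{\omega}_\mathrm{M}$, which makes the cross-product term small modulo $\Vert\e^\epsilon_\mathrm{b}\Vert_{H^1}$. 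This is precisely the ``tangential structure'' you flag in your obstacle paragraph, so you have anticipated the difficulty; just be aware that the cancellation mechanism for the two bad terms is asymmetric, and that the extra $\Vert\m^\epsilon-\widetilde{\m}^\epsilon-\boldsymbol{\omega}_\mathrm{b}\Vert_{H^1}$ on the right of \eqref{estimate of L w_b} enters through the $\mathbf{D}_2$ analysis (the paper's estimate \eqref{D_2 - A}), not through $\mathbf{D}_1$.
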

\begin{proof}
	In order to estimate left-hand side of \eqref{estimate of L w_b}, we split it as $\Vert \mathcal{L}^\epsilon\boldsymbol{\omega}_\mathrm{b}
	\Vert_{L^2(\Omega)} = \Vert \mathcal{L}^\epsilon
	\boldsymbol{\omega}_\mathrm{N} - \mathcal{L}^\epsilon \boldsymbol{\omega}_\mathrm{M} \Vert_{L^2(\Omega)} 
	\le R_1 + R_2 + R_3 $, with 
	\begin{equation*}
		\left\{
		\begin{aligned}
		R_1 =& \big\Vert \mathcal{L}^\epsilon
		\boldsymbol{\omega}_\mathrm{N} 
		-
		\big\{ \mathcal{A}_\epsilon \boldsymbol{\omega}_\mathrm{N}
		- 
		\m^\epsilon \times \mathcal{A}_\epsilon \boldsymbol{\omega}_\mathrm{N}
		-  
		\mathbf{D}_2 (\boldsymbol{\omega}_\mathrm{N})
		 \big\} \big\Vert_{L^2(\Omega)},\\
		R_2 =  &
		\big\Vert \big\{ \mathcal{A}_\epsilon \boldsymbol{\omega}_\mathrm{M}^1
		- 
		\m^\epsilon \times \mathcal{A}_\epsilon \boldsymbol{\omega}_\mathrm{M}^1
		-  
		\mathcal{A}_\epsilon \boldsymbol{\omega}_\mathrm{M}^2
		\big\}
		 -
		\mathcal{L}^\epsilon \boldsymbol{\omega}_\mathrm{M} 
		\big\Vert_{L^2(\Omega)},\\
		R_3 = &
		\big\Vert
		\mathcal{A}_\epsilon 
		(\boldsymbol{\omega}_\mathrm{N} 
		- \boldsymbol{\omega}_\mathrm{M}^1)
		- 
		\m^\epsilon \times \mathcal{A}_\epsilon 
		(\boldsymbol{\omega}_\mathrm{N} 
		- \boldsymbol{\omega}_\mathrm{M}^1)
		-  
		\big( \mathbf{D}_2 (\boldsymbol{\omega}_\mathrm{N})
		-  
		\mathcal{A}_\epsilon \boldsymbol{\omega}_\mathrm{M}^2
		\big)
		\big\Vert_{L^2(\Omega)}.
		\end{aligned}\right.
	\end{equation*}
One can check by definition of $\mathcal{L}^\epsilon$ that $R_1$ does not have derivative of $\boldsymbol{\omega}_\mathrm{N}$, and $R_2$ only contains first-order derivative of $\boldsymbol{\omega}_\mathrm{M}$, thus they can be estimated by Lemma \ref{lemma: estimate of chi} and Lemma \ref{lemma: estimate of theta and gamma} as
\begin{equation*}
	R_1 + R_2 \le C \epsilon \ln (\epsilon^{-1} + 1),
\end{equation*}
where $C$ depends on $
\Vert \m^\epsilon \Vert_{H^{1}(\Omega)}$, $\Vert \nabla^2 \m_0  \Vert_{W^{2,\infty}(\Omega)}$.
As for $R_3$, in the view of \eqref{difference of bad term}, it can be bounded from above by
\begin{equation*}
	\begin{aligned}
		\Vert \mathcal{A}_\epsilon \boldsymbol{\omega}_\mathrm{N}
		- \mathcal{T}_{\mathrm{bad}}^1 \Vert_{L^2(\Omega)}
		+
		\Vert \m^\epsilon \times (\mathcal{A}_\epsilon \boldsymbol{\omega}_\mathrm{N}
		- \mathcal{T}_{\mathrm{bad}}^1) \Vert_{L^2(\Omega)}
		+
		\Vert \mathbf{D}_2 (\boldsymbol{\omega}_\mathrm{N})
		- \mathcal{T}_{\mathrm{bad}}^2 \Vert_{L^2(\Omega)}.
	\end{aligned}
\end{equation*}
	In above terms, the first term can be estimated by applying $\mathcal{A}_\epsilon (\Phi_i - x_i - \epsilon\chi^\epsilon_i ) = 0$ to derive that $\Vert \mathcal{A}_\epsilon \boldsymbol{\omega}_\mathrm{N}
	- \mathcal{T}_{\mathrm{bad}}^1 \Vert_{L^2(\Omega)} \le C \epsilon \ln (\epsilon^{-1} + 1)$, with $C$ independent of $\epsilon$. The same result holds for the second term. Now let us estimate the last term. We assert that
	\begin{equation}\label{D_2 - A}
	\Vert \mathbf{D}_2 (\boldsymbol{\omega}_\mathrm{N})
	- \mathcal{T}_{\mathrm{bad}}^2\Vert_{L^2(\Omega)}
	\le 
	C \epsilon\ln (\epsilon^{-1} + 1) 
	+
	C\Vert \m^\epsilon - \widetilde{\m}^\epsilon - \boldsymbol{\omega}_\mathrm{b}\Vert_{H^{1}(\Omega)},
	\end{equation}
	where $C$ depends on $\Vert \nabla^2 \m_0 \Vert_{W^{1,\infty}(\Omega)}$ and $
	\Vert \m^\epsilon \Vert_{H^{1}(\Omega)}$. In fact, we denote the terms in $\mathbf{D}_2(\boldsymbol{\omega}_\mathrm{N})$ that contain derivatives of $\boldsymbol{\omega}_\mathrm{N}$ by $\widetilde{\mathbf{D}}_2(\boldsymbol{\omega}_\mathrm{N}) $,
	then it reads
	\begin{equation*}
	\begin{gathered}
	\widetilde{\mathbf{D}}_2(\boldsymbol{\omega}_\mathrm{N}) 
	=
	\alpha \sum_{i, j= 1}^n
	\big(a^\epsilon_{ij}
	\frac{\partial\boldsymbol{\omega}_\mathrm{N}}{\partial x_i}  
	\cdot \frac{\partial\m^\epsilon}{\partial x_j}  
	+
	a^\epsilon_{ij}
	\frac{\partial\widetilde{\m}^\epsilon}{\partial x_i}  
	\cdot \frac{\partial\boldsymbol{\omega}_\mathrm{N}  }{\partial x_j}   
	\big)\m^\epsilon,
	\end{gathered}
	\end{equation*}
	and one can check the remaining terms satisfy
	\begin{equation*}
	\begin{aligned}
	\Vert \mathbf{D}_2(\boldsymbol{\omega}_\mathrm{N})  
	-
	\widetilde{\mathbf{D}}_2(\boldsymbol{\omega}_\mathrm{N})  \Vert_{L^2(\Omega)} 
	\le
	C ( 1 + \Vert \nabla \m_0 \Vert_{L^{\infty}(\Omega)}^2 )
	\Vert \boldsymbol{\omega}_\mathrm{N} \Vert_{L^{2}(\Omega)}
	\le & 
	C \epsilon\ln (\epsilon^{-1} + 1) .
	\end{aligned}
	\end{equation*}
	Substituting $\m^\epsilon = (\m^\epsilon - \widetilde{\m}^\epsilon - \boldsymbol{\omega}_\mathrm{N}) + (\widetilde{\m}^\epsilon + \boldsymbol{\omega}_\mathrm{N})$ into $\widetilde{\mathbf{D}}_2(\boldsymbol{\omega}_\mathrm{N})$, one can write
	\begin{equation*}
	\begin{aligned}
	\widetilde{\mathbf{D}}_2(\boldsymbol{\omega}_\mathrm{N}) 
	=&
	\mathcal{T}_{\mathrm{bad}}^2 
	+
	\alpha \sum_{i, j= 1}^n
	\big(a^\epsilon_{ij}
	\frac{\partial\boldsymbol{\omega}_\mathrm{N}}{\partial x_i}  
	\cdot \frac{\partial\{\m^\epsilon - \widetilde{\m}^\epsilon - \boldsymbol{\omega}_\mathrm{N}\}}{\partial x_j}  
	\big)\m^\epsilon\\
	& +
	\alpha \sum_{i, j= 1}^n
	\big(a^\epsilon_{ij}
	\frac{\partial\boldsymbol{\omega}_\mathrm{N}}{\partial x_i}  
	\cdot \frac{\partial\{\widetilde{\m}^\epsilon + \boldsymbol{\omega}_\mathrm{N}\}}{\partial x_j}  
	\big)\big(
	\m^\epsilon - \widetilde{\m}^\epsilon 
	- \boldsymbol{\omega}_\mathrm{N}\big).
	\end{aligned}
	\end{equation*}
	Hence it follows that
	\begin{equation*}
	\begin{aligned}
	\Vert
	\widetilde{\mathbf{D}}_2(\boldsymbol{\omega}_\mathrm{N}) 
	- 
	\mathcal{T}_{\mathrm{bad}}^2
	\Vert_{L^2(\Omega)} 
	\le &
	C ( 1 + \Vert \nabla \m_0 \Vert_{L^{\infty}(\Omega)}^2 )
	\Vert \m^\epsilon - \widetilde{\m}^\epsilon - \boldsymbol{\omega}_\mathrm{N} \Vert_{H^{1}(\Omega)}.
	\end{aligned}
	\end{equation*}
	The assertion is proved.
	
	As for \eqref{estimate of partial_t w_b}, one can deduce from Lemma \ref{lemma: estimate of chi} and the proof of Lemma \ref{lemma: estimate of theta and gamma} to obtain
	\begin{equation*}
		\begin{gathered}
		\Vert \partial_t\boldsymbol{\omega}_\mathrm{N}  \Vert_{L^\infty(\Omega)} 
		\le C\epsilon \ln (\epsilon^{-1} + 1),\\
		\Vert \partial_t\boldsymbol{\omega}_\mathrm{M}^i  \Vert_{L^\infty(\Omega)} 
		\le C\Vert \partial_t G^i_l \Vert_{L^\infty(\Omega)}
		\le 
		C\epsilon \ln (\epsilon^{-1} + 1),
		\end{gathered}
	\end{equation*}
where $C$ depends on $\Vert \nabla (\partial_t \m_0) \Vert_{W^{1,\infty}(\Omega)}$.
In order to prove \eqref{estimate of A e_b}, we use Lemma \ref{lemma: estimate of chi}, Lemma \ref{lemma: estimate of theta and gamma} and definition of $\widetilde{\m}^\epsilon$, to deduce the estimates
\begin{equation*}
	\Vert \mathcal{A}_\epsilon \widetilde{\m}^\epsilon \Vert_{L^{2} (\Omega)}
	+
	\Vert \mathcal{A}_\epsilon \boldsymbol{\omega}_{{\mathrm{b}}} \Vert_{L^{2} (\Omega)}
	\le C\ln (\epsilon^{-1} + 1),
\end{equation*}
then \eqref{estimate of A e_b} follows with some constant $C$ depending on $\Vert \mathcal{A}_\epsilon \m^\epsilon \Vert_{L^{2} (\Omega)}$.
Therefore Theorem is proved.
\end{proof}

\section{Stability Analysis}\label{sec:stability}

In this section, we will discuss the stability of following initial-boundary problem, which is motivated by equation \eqref{system of e}:
\begin{equation}\label{eqn:system of e}
	\left\{ \begin{aligned}
		\partial_t\e - \mathcal{L}^\epsilon(\e) 
		&= \boldsymbol{F} \quad \mbox{in $\Omega$},\\
		\boldsymbol{\nu}\cdot a^\epsilon\nabla \e &= \mathbf{g} \quad \mbox{on $\partial\Omega$},\\
		\e(0,x) &= \mathbf{h} \quad \mbox{in $\Omega$} .
	\end{aligned} \right.
\end{equation}
The following two inequalities will be used. The first inequality is motivated by $W^{1,p}$ estimate for oscillatory elliptic problem.
\begin{lemma}\label{lemma: regularity for epslon}
	Assume $u \in H^{2}(\Omega)$, $\boldsymbol{\nu} \cdot a^\epsilon\nabla u = g$ on $\partial \Omega$, with $g\in B^{-1/2,2}(\partial\Omega)$,
	then it holds that for $n\le 3$,
	\begin{equation*}
		\Vert \nabla u \Vert_{L^{6}(\Omega)}
		\le C
		\Vert \mathcal{A}_\epsilon u \Vert_{L^{2}(\Omega)}
		+ C\Vert g \Vert_{B^{-1/2,2}(\partial\Omega)},
	\end{equation*}
	moreover, if $g = 0$, then one has for $n\le 3$
	\begin{equation*}
		\Vert \nabla u \Vert_{L^{6}(\Omega)}
		\le C
		\Vert \mathcal{A}_\epsilon u \Vert_{L^{2}(\Omega)}.
	\end{equation*}
Constant $C$ is independent of $\epsilon$.
\end{lemma}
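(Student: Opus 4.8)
The assertion is a bound uniform in $\epsilon$, which rules out the naive route: one might try $\|\nabla u\|_{L^6(\Omega)}\le C\|u\|_{H^2(\Omega)}$ via the Sobolev embedding $H^1(\Omega)\hookrightarrow L^6(\Omega)$ (valid for $n\le3$) followed by elliptic $H^2$-regularity for $\mathcal{A}_\epsilon$, but the latter carries a constant that degenerates like $\epsilon^{-1}$ as the coefficients oscillate on scale $\epsilon$. The plan is instead to stay at the level of first derivatives and exploit the $\epsilon$-uniform $W^{1,p}$ theory for operators with periodic coefficients. First I would set $f:=\mathcal{A}_\epsilon u\in L^2(\Omega)$ and regard $u$ as a weak solution of the Neumann problem $-\mathrm{div}(a^\epsilon\nabla u)=-f$ in $\Omega$, $\boldsymbol{\nu}\cdot a^\epsilon\nabla u=g$ on $\partial\Omega$, the compatibility $\int_\Omega f=\int_{\partial\Omega}g$ holding automatically by the divergence theorem. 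Since only $\nabla u$ enters the conclusion I normalize $\int_\Omega u=0$.

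Next I would recast $f$ in divergence form with an $L^6$ vector field. Because $n\le3$ one has $W^{1,6/5}(\Omega)\hookrightarrow L^2(\Omega)$, hence $L^2(\Omega)\hookrightarrow W^{-1,6}(\Omega)$; concretely, solving the constant-coefficient Neumann problem $\Delta w=f-\overline{f}$ in $\Omega$ (with $\overline f$ the mean of $f$ subtracted, so it is compatible), $\partial_{\boldsymbol{\nu}}w=0$ on $\partial\Omega$, $\int_\Omega w=0$, $\epsilon$-free elliptic regularity on the smooth domain yields $w\in H^2(\Omega)$ with $\|w\|_{H^2(\Omega)}\le C\|f\|_{L^2(\Omega)}$, so that $F:=\nabla w\in L^6(\Omega;\mathbb{R}^n)$ with $\|F\|_{L^6(\Omega)}\le C\|f\|_{L^2(\Omega)}$ by $H^2(\Omega)\hookrightarrow W^{1,6}(\Omega)$, $\boldsymbol{\nu}\cdot F=0$ on $\partial\Omega$, and $\mathrm{div}F=f-\overline f$; absorbing the constant through $\overline f=\mathrm{div}(\overline f\,\bx/n)$ gives $f=\mathrm{div}\widetilde F$ with $\|\widetilde F\|_{L^6(\Omega)}\le C\|f\|_{L^2(\Omega)}$. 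Thus $u$ solves $\mathrm{div}(a^\epsilon\nabla u)=\mathrm{div}\widetilde F$ with conormal datum $\boldsymbol{\nu}\cdot a^\epsilon\nabla u=\boldsymbol{\nu}\cdot\widetilde F+h$ on $\partial\Omega$, where $h:=g-\boldsymbol{\nu}\cdot\widetilde F$ and $\boldsymbol{\nu}\cdot\widetilde F$ is a smooth function times $\overline f$. Now I would invoke the $\epsilon$-uniform $W^{1,p}$ estimate for the Neumann problem for $\mathrm{div}(a(\cdot/\epsilon)\nabla)$ on a $C^{1,\eta}$ domain — the Avellaneda--Lin-type boundary estimate recorded in \cite{shen2018periodic} — with $p=6$: this produces $\|\nabla u\|_{L^6(\Omega)}\le C(\|\widetilde F\|_{L^6(\Omega)}+\|u\|_{L^2(\Omega)})$ plus a boundary term coming from $h$, with $C$ independent of $\epsilon$. (Equivalently one may cite the version of the estimate with right-hand side in $L^q$, $\tfrac1q=\tfrac16+\tfrac1n$, and use $f\in L^2(\Omega)\hookrightarrow L^q(\Omega)$ since $q\le2$ for $n\le3$.)

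It remains to absorb the lower-order terms. The term $\|u\|_{L^2(\Omega)}$ is removed by the normalization $\int_\Omega u=0$, Poincaré's inequality, and the $\epsilon$-uniform energy estimate — Lax--Milgram against the coercivity and boundedness constants $a_{\mathrm{min}},a_{\mathrm{max}}$ of \eqref{uniformly coercive}, which do not depend on $\epsilon$ — obtained by testing the weak form with $u$: this gives $\|u\|_{L^2(\Omega)}\le C\|\nabla u\|_{L^2(\Omega)}\le C(\|f\|_{L^2(\Omega)}+\|g\|_{B^{-1/2,2}(\partial\Omega)})$. The boundary term from $h$ is bounded by $C(\|g\|_{B^{-1/2,2}(\partial\Omega)}+\|f\|_{L^2(\Omega)})$ via a trace argument together with the smoothness of $\partial\Omega$ and of the data; when $g=0$ it reduces to $C\|f\|_{L^2(\Omega)}$ alone, which is the second, sharper statement. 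Collecting everything yields the lemma. The only genuinely nontrivial ingredient is the $\epsilon$-uniformity of the $W^{1,6}$ estimate itself — the large-scale boundary Lipschitz / $W^{1,p}$ bound proved by the compactness method together with a real-variable (good-$\lambda$) argument — which I would quote from the monograph rather than reprove; the reduction, the constant-coefficient regularity, Poincaré's inequality, the energy estimate and the boundary bookkeeping are all standard and manifestly $\epsilon$-independent.
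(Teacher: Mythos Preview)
Your proposal is correct and takes essentially the same route as the paper: both rely on the $\epsilon$-uniform $W^{1,p}$ Neumann estimate for operators with periodic coefficients recorded in \cite{shen2018periodic}, and the paper in fact dispenses with any reduction and simply cites Theorem~6.3.2 there as a direct source. Your write-up supplies the bookkeeping (rewriting $\mathcal{A}_\epsilon u\in L^2$ in divergence form via a constant-coefficient auxiliary problem, normalizing the mean, absorbing the lower-order term by the $\epsilon$-free energy estimate) that the paper leaves implicit; the parenthetical shortcut you mention --- applying the $L^q$ right-hand-side version with $\tfrac1q=\tfrac16+\tfrac1n$ and using $L^2(\Omega)\hookrightarrow L^q(\Omega)$ for $n\le3$ --- is exactly the one-line corollary the paper has in mind.
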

\begin{proof}
	We refer that Lemma \ref{lemma: regularity for epslon} is a direct corollary of Theorem 6.3.2 in \cite{shen2018periodic}. One can find the proof in \cite{shen2018periodic} [Pages 144-152].
\end{proof}
We also introduce Sobolev inequality with small coefficient when $n=2$.
\begin{lemma}\label{lemma: disturbed Sobolev inequality}
	For any function $f\in H^2(\Omega)$, one has when $n=2$
	\begin{equation*}
		\Vert f \Vert_{L^\infty(\Omega)}
		\le
		C \ln(\epsilon^{-1}+1) \Vert f \Vert_{H^1(\Omega)}
		+ \epsilon \Vert \mathcal{A}^\epsilon f \Vert_{L^2(\Omega)},
	\end{equation*}
where constant $C$ is independent of $\epsilon$.
\end{lemma}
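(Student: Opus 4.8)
The plan is to reduce the estimate to two ingredients that essentially do not interact: the borderline ($n=2$) logarithmic Sobolev inequality on $\Omega$, and a scaled elliptic estimate that trades one power of $\epsilon^{-1}$ for the natural energy quantity $\Vert\mathcal{A}_\epsilon f\Vert_{L^2(\Omega)}$.

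First I would establish the $\epsilon$-free logarithmic Sobolev inequality: for $n=2$, any $f\in H^2(\Omega)$ and any $\lambda\in(0,1)$,
\[
\Vert f\Vert_{L^\infty(\Omega)}\le C\,[\ln(\lambda^{-1}+1)]^{1/2}\,\Vert f\Vert_{H^1(\Omega)}+C\lambda\,\Vert f\Vert_{H^2(\Omega)}.
\]
I would prove this by extending $f$ to $\widetilde f\in H^2(\mathbb{R}^2)$ with $\Vert\widetilde f\Vert_{H^k(\mathbb{R}^2)}\le C\Vert f\Vert_{H^k(\Omega)}$ for $k=1,2$ (admissible since $\partial\Omega$ is smooth by Assumption \ref{assumption}), writing $\Vert\widetilde f\Vert_{L^\infty(\mathbb{R}^2)}\le C\int_{\mathbb{R}^2}\vert\widehat{\widetilde f}(\xi)\vert\,\mathrm{d}\xi$ and splitting the integral at $\vert\xi\vert=\lambda^{-1}$: Cauchy--Schwarz with weight $(1+\vert\xi\vert^2)^{-1}$ on the low frequencies gives $[\int_{\vert\xi\vert\le\lambda^{-1}}(1+\vert\xi\vert^2)^{-1}\mathrm{d}\xi]^{1/2}\Vert\widetilde f\Vert_{H^1}\le C[\ln(\lambda^{-1}+1)]^{1/2}\Vert\widetilde f\Vert_{H^1}$, while weight $(1+\vert\xi\vert^2)^{-2}$ on the high frequencies gives $[\int_{\vert\xi\vert>\lambda^{-1}}\vert\xi\vert^{-4}\mathrm{d}\xi]^{1/2}\Vert\widetilde f\Vert_{H^2}\le C\lambda\Vert\widetilde f\Vert_{H^2}$, since $\int_{\vert\xi\vert>R}\vert\xi\vert^{-4}\mathrm{d}\xi\sim R^{-2}$ in two dimensions. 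Equivalently, one interpolates $H^s$ between $H^1$ and $H^2$, uses $H^s(\Omega)\hookrightarrow L^\infty(\Omega)$ for $s>1$ with embedding constant of order $(s-1)^{-1/2}$, and optimizes over $s$ with $s-1\sim[\ln(\lambda^{-1}+1)]^{-1}$.

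Next I would bound the second-order norm uniformly in $\epsilon$. Since $a^\epsilon(\bx)=a(\bx/\epsilon)$ is uniformly elliptic with Lipschitz seminorm of order $\epsilon^{-1}$, expanding $\mathcal{A}_\epsilon f=a^\epsilon:\nabla^2 f+(\bdiv a^\epsilon)\cdot\nabla f$ and using ellipticity together with standard elliptic regularity up to the smooth boundary yields
\[
\Vert f\Vert_{H^2(\Omega)}\le C\,\epsilon^{-1}\big(\Vert\mathcal{A}_\epsilon f\Vert_{L^2(\Omega)}+\Vert f\Vert_{H^1(\Omega)}\big),
\]
with $C$ independent of $\epsilon$, the factor $\epsilon^{-1}$ being precisely the size of $\Vert\nabla a^\epsilon\Vert_{L^\infty}$. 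Taking $\lambda=\epsilon$ in the previous display and substituting, the term $C\epsilon\Vert f\Vert_{H^2(\Omega)}$ is dominated by $C\epsilon\Vert\mathcal{A}_\epsilon f\Vert_{L^2(\Omega)}+C\Vert f\Vert_{H^1(\Omega)}$, and the spare $C\Vert f\Vert_{H^1(\Omega)}$ is absorbed into $C[\ln(\epsilon^{-1}+1)]^{1/2}\Vert f\Vert_{H^1(\Omega)}\le C\ln(\epsilon^{-1}+1)\Vert f\Vert_{H^1(\Omega)}$, which gives the claim.

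The main obstacle is exactly this last interplay: unlike the constant-coefficient Brezis--Gallou\"et inequality, where $\Vert f\Vert_{H^2}$ may be replaced by $\Vert\Delta f\Vert_{L^2}+\Vert f\Vert_{H^1}$ with an absolute constant, the oscillating coefficients force the loss of one full power of $\epsilon^{-1}$ in passing from $\Vert f\Vert_{H^2}$ to $\Vert\mathcal{A}_\epsilon f\Vert_{L^2}$, and it is the cancellation of this $\epsilon^{-1}$ against the gain $\lambda=\epsilon$ in the logarithmic Sobolev inequality that is responsible for the stated rate; one must also ensure that the boundary contribution in the $H^2$ estimate (handled through the smoothness of $\partial\Omega$, and, in the applications, the Neumann structure of $f$) does not smuggle $\Vert f\Vert_{H^2}$ back in. An alternative route that stays purely within energy quantities is to represent $f$ via the Neumann function $N^\epsilon(\bx,\bz)$ of $\mathcal{A}_\epsilon$, split the $\bz$-integral at radius $\epsilon$, estimate the far part by $[\ln(\epsilon^{-1}+1)]^{1/2}\Vert\nabla f\Vert_{L^2(\Omega)}$ via $\vert\nabla_{\bz}N^\epsilon\vert\lesssim\vert\bx-\bz\vert^{-1}$, and the near part by $\epsilon\Vert\mathcal{A}_\epsilon f\Vert_{L^2(\Omega)}$ via the uniform bound $\Vert\nabla f\Vert_{L^p(\Omega)}\le C p\,\Vert\mathcal{A}_\epsilon f\Vert_{L^2(\Omega)}$ from the $L^p$ version of Lemma \ref{lemma: regularity for epslon}, taking $p\sim\ln(\epsilon^{-1}+1)$.
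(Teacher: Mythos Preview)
Your alternative route via the Neumann function $N^\epsilon$---representing $f$, splitting at radius $\epsilon$ with a cutoff, using $\vert\nabla_{\bz}N^\epsilon(\bx,\bz)\vert\le C\vert\bx-\bz\vert^{-1}$ on the far piece and integrating by parts to produce $\mathcal{A}_\epsilon f$ on the near piece---is exactly the paper's proof. Your primary route, by contrast, is genuinely different: the paper never invokes the Brezis--Gallou\"et inequality or any Fourier argument, and it never needs an $H^2$ bound on $f$. The payoff of your approach is a sharper logarithmic exponent, $[\ln(\epsilon^{-1}+1)]^{1/2}$ rather than the paper's $\ln(\epsilon^{-1}+1)$. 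The cost is precisely the subtlety you flag: the scaled elliptic estimate $\Vert f\Vert_{H^2(\Omega)}\le C\epsilon^{-1}(\Vert\mathcal{A}_\epsilon f\Vert_{L^2(\Omega)}+\Vert f\Vert_{H^1(\Omega)})$ follows cleanly in the interior by rescaling to unit-Lipschitz coefficients and covering by unit balls, but near $\partial\Omega$ it needs a boundary condition on $f$ (otherwise the boundary strip contributes an uncontrolled term, since uniform $W^{2,2}$ estimates for $\mathcal{A}_\epsilon$ are false in general). The paper's representation-formula argument sidesteps this by working only with $\nabla f$ and $\mathcal{A}_\epsilon f$, at the price of the cruder logarithm; in the applications (Theorem \ref{theorem:stability}) the function carries Neumann data anyway, so either route ultimately closes.
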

\begin{proof}
	Using the Neumann function, see \cite{shen2018periodic} Section 7.4, one has
	\begin{equation*}
		f 
		= - \int_{\Omega} \nabla_{\bz} N^\epsilon(\bx,\bz)\cdot a^\epsilon \nabla f(\bz) \d\bz
		+ \frac{1}{\abs{\partial\Omega}}\int_{\partial\Omega}f \d \bz
		=: P_1 + P_2.
	\end{equation*}
	Applying cut-off function $\phi^\epsilon = \phi^\epsilon(\bx-\bz)$, the first term yields by integration by parts
	\begin{equation*}
		\begin{aligned}
			P_1 = &
			- \int_{\Omega} (1-\phi^\epsilon) \nabla_{\bz} N^\epsilon(\bx,\bz)\cdot a^\epsilon \nabla f(\bz) \d\bz
			+
			\int_{\Omega} \phi^\epsilon N^\epsilon(\bx,\bz)\cdot \mathcal{A}_\epsilon f(\bz) \d\bz\\
			& +
			\int_{\Omega} \nabla_{\bz}\phi^\epsilon(\bx-\bz)\cdot
			N^\epsilon(\bx,\bz)\cdot a^\epsilon \nabla f(\bz) \d\bz\\
			\le &
			C \epsilon \Vert \mathcal{A}^\epsilon f \Vert_{L^2(\Omega)}
			+
			C \ln(\epsilon^{-1}+1) \Vert \nabla f \Vert_{L^2(\Omega)},
		\end{aligned}
	\end{equation*}
here in the last line we have used the fact $\nabla_{\bz} N^\epsilon(\bx,\bz)\le C\abs{\bx-\by}^{-1}$ and $N^\epsilon(\bx,\bz)\le C\{1+\ln[\abs{\bx-\bz}^{-1}]\}$ for $n=2$, see \cite{shen2018periodic} page 159. As for $P_2$, one has by trace inequality that $P_2 \le C \Vert f \Vert_{H^1(\Omega)}$. The Lemma is proved.
\end{proof}

Now let us give the stability of system \eqref{eqn:system of e} in terms of $\mathbf{h}$, $\mathbf{g}$, $\boldsymbol{F}$ in $L^{\infty}(0,T;L^2(\Omega))$ and $L^{\infty}(0,T;H^1(\Omega))$ norm, respectively.

\subsection{Stability in $L^{\infty}(0,T;L^2(\Omega))$}
\begin{theorem}\label{theorem:stability}
	Let $\e \in L^\infty(0,T;H^2(\Omega))$ be a strong solution to \eqref{eqn:system of e}.
	Assume $\mathbf{h} \in L^{2}(\Omega)$, $\mathbf{g}\in L^{\infty}(0,T; B^{-1/2, 2} (\partial\Omega))$, and $\boldsymbol{F} = \boldsymbol{F}_1 + \boldsymbol{F}_2 $ satisfies
	\begin{equation}\label{assumption of F1 and F2}
		\boldsymbol{F}_1\in L^2(0,T; L^{\sigma} (\Omega)),
		\quad
		\boldsymbol{F}_2
		\in L^2(0,T; L^{2} (\Omega))
	\end{equation}
	with $\sigma=1$ when $n =1,2$, and $\sigma=6/5$ when $n =3$,
	then it holds that, for any $0\le t\le T$
	\begin{equation}\label{eqn:stability 1}
		\begin{aligned}
			&\Vert \e \Vert^2_{L^\infty(0,T;L^2(\Omega))} 
			+ 
			\Vert \nabla \e \Vert^2_{L^2(0,T; L^{2} (\Omega))}\\
			\le  &
			C_{\delta} \Big( 
			\Vert \mathbf{h} \Vert^2_{L^{2}(\Omega)} 
			+
			\Vert \mathbf{g} \Vert^2_{L^2(0,T; B^{-1/2, 2} (\partial\Omega))}
			+
			\gamma(\epsilon)
			\Vert \boldsymbol{F}_1 \Vert^2_{L^2(0,T; L^{\sigma} (\Omega))} \Big) \\
			& + 
			\delta\Vert \boldsymbol{F}_2 \Vert^2_{L^2(0,T; L^{2} (\Omega))}
			+
			\epsilon^2 \Vert \mathcal{A}_\epsilon \e \Vert_{L^2(0,T; L^{2} (\Omega))}^2,
		\end{aligned}
	\end{equation}
	for any small $\delta > 0$, where
	\begin{equation*}
		\left\{\begin{aligned}
					&\gamma(\epsilon)=1,&
					&\text{when $n =1,3$},\\
					&\gamma(\epsilon)=[\ln (\epsilon^{-1} + 1)]^2, & &\text{when $n =2$}.
				\end{aligned}
				\right.
	\end{equation*}
	$C_\delta$ is a constant depending on $\Vert \nabla \m^\epsilon  \Vert_{L^{4}(\Omega)}$, $\Vert \nabla \widetilde{\m}^\epsilon  \Vert_{L^{4}(\Omega)}$, but is independent of $t$ and $\epsilon$.
\end{theorem}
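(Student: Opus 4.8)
The plan is a structure-preserving energy estimate for the linear parabolic system \eqref{eqn:system of e}: pair the equation with $\e$ in $L^2(\Omega)$, so that
\[
\tfrac12\tfrac{d}{dt}\Vert\e\Vert_{L^2(\Omega)}^2
= \int_\Omega \mathcal{L}^\epsilon(\e)\cdot\e\,\d\bx + \int_\Omega \boldsymbol{F}\cdot\e\,\d\bx ,
\]
and then treat $\mathcal{L}^\epsilon(\e)=\alpha\widetilde{\mathcal{H}}^\epsilon_e(\e)-\mathbf{D}_1(\e)-\mathbf{D}_2(\e)$ term by term. For the damping part, integration by parts converts $\alpha\int_\Omega\bdiv(a^\epsilon\nabla\e)\cdot\e\,\d\bx$ into $-\alpha\int_\Omega a^\epsilon\nabla\e\cdot\nabla\e\,\d\bx+\alpha\int_{\partial\Omega}\mathbf{g}\cdot\e$; coercivity \eqref{uniformly coercive} extracts the good dissipation $-\alpha a_{\mathrm{min}}\Vert\nabla\e\Vert_{L^2(\Omega)}^2$, the anisotropy term $-\alpha\int_\Omega K^\epsilon(\e\cdot\bu)^2\,\d\bx$ has the favorable sign, and the stray-field term is bounded by $C\Vert\e\Vert_{L^2(\Omega)}^2$ via \eqref{bound of stray field}.

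The core of the argument is the precession term $-\int_\Omega\mathbf{D}_1(\e)\cdot\e\,\d\bx$. The summand $\e\times\mathcal{H}^\epsilon_e(\widetilde{\m}^\epsilon)$ is orthogonal to $\e$ and drops out, so only $-\int_\Omega(\m^\epsilon\times\widetilde{\mathcal{H}}^\epsilon_e(\e))\cdot\e\,\d\bx$ remains. Its leading part equals $\int_\Omega\m^\epsilon\cdot(\e\times\mathcal{A}_\epsilon\e)\,\d\bx$, and here I would use the identity $\bdiv(\e\times a^\epsilon\nabla\e)=\e\times\mathcal{A}_\epsilon\e$, whose proof rests precisely on the symmetry of $a^\epsilon$ forcing $\sum_{i,j}a^\epsilon_{ij}\,\partial_i\e\times\partial_j\e=0$; integrating by parts then gives $-\int_\Omega\nabla\m^\epsilon\cdot(\e\times a^\epsilon\nabla\e)\,\d\bx+\int_{\partial\Omega}\mathbf{g}\cdot(\m^\epsilon\times\e)$. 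The volume integral is dominated by $\Vert\nabla\m^\epsilon\Vert_{L^4(\Omega)}\Vert\e\Vert_{L^4(\Omega)}\Vert\nabla\e\Vert_{L^2(\Omega)}$, and since $\Vert\e\Vert_{L^4(\Omega)}\le C\Vert\nabla\e\Vert_{L^2(\Omega)}^{n/4}\Vert\e\Vert_{L^2(\Omega)}^{1-n/4}+C\Vert\e\Vert_{L^2(\Omega)}$ for $n\le3$, Young's inequality turns this into $\delta\Vert\nabla\e\Vert_{L^2(\Omega)}^2+C_\delta\Vert\e\Vert_{L^2(\Omega)}^2$, which is exactly why $C_\delta$ needs only $\Vert\nabla\m^\epsilon\Vert_{L^4(\Omega)}$. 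The lower-order part of $\widetilde{\mathcal{H}}^\epsilon_e(\e)$ contributes at most $C\Vert\e\Vert_{L^2(\Omega)}^2$ by \eqref{bound of stray field}, and the degeneracy term $-\int_\Omega\mathbf{D}_2(\e)\cdot\e\,\d\bx$ is handled in the same spirit: every summand carries at most one derivative of $\e$, a factor with $\vert\m^\epsilon\vert=1$ or a stray field bounded through \eqref{bound of stray field}, and a coefficient controlled by $\Vert\nabla\m^\epsilon\Vert_{L^4(\Omega)}$ or $\Vert\nabla\widetilde{\m}^\epsilon\Vert_{L^4(\Omega)}$, so after Gagliardo--Nirenberg and Young it is again absorbed into $\delta\Vert\nabla\e\Vert_{L^2(\Omega)}^2+C_\delta\Vert\e\Vert_{L^2(\Omega)}^2$.

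For the source I would split $\int_\Omega\boldsymbol{F}_2\cdot\e\,\d\bx\le\delta\Vert\boldsymbol{F}_2\Vert_{L^2(\Omega)}^2+C_\delta\Vert\e\Vert_{L^2(\Omega)}^2$, and for $\boldsymbol{F}_1$ distinguish dimensions: for $n=3$ use $\int_\Omega\boldsymbol{F}_1\cdot\e\le\Vert\boldsymbol{F}_1\Vert_{L^{6/5}(\Omega)}\Vert\e\Vert_{L^6(\Omega)}$ with $H^1\hookrightarrow L^6$; for $n=1$ use $H^1\hookrightarrow L^\infty$ against $\Vert\boldsymbol{F}_1\Vert_{L^1(\Omega)}$; for $n=2$, where $H^1\not\hookrightarrow L^\infty$, invoke the disturbed Sobolev inequality of Lemma \ref{lemma: disturbed Sobolev inequality}, $\Vert\e\Vert_{L^\infty(\Omega)}\le C\ln(\epsilon^{-1}+1)\Vert\e\Vert_{H^1(\Omega)}+\epsilon\Vert\mathcal{A}_\epsilon\e\Vert_{L^2(\Omega)}$, which after Young produces both the logarithmic weight $\gamma(\epsilon)=[\ln(\epsilon^{-1}+1)]^2$ multiplying $\Vert\boldsymbol{F}_1\Vert_{L^1(\Omega)}^2$ and the non-absorbable remainder $\epsilon^2\Vert\mathcal{A}_\epsilon\e\Vert_{L^2(\Omega)}^2$. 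The two boundary integrals $\int_{\partial\Omega}\mathbf{g}\cdot(\alpha\e+\m^\epsilon\times\e)$ are estimated by the $B^{-1/2,2}$--$B^{1/2,2}(\partial\Omega)$ duality, the trace inequality $\Vert\e\Vert_{B^{1/2,2}(\partial\Omega)}\le C\Vert\e\Vert_{H^1(\Omega)}$ together with a fractional multiplier estimate for the $\m^\epsilon$ factor (using $\vert\m^\epsilon\vert=1$ and the uniform control of low-order norms of $\m^\epsilon$), and Young, yielding $\delta\Vert\nabla\e\Vert_{L^2(\Omega)}^2+C_\delta\Vert\mathbf{g}\Vert_{B^{-1/2,2}(\partial\Omega)}^2+C\Vert\e\Vert_{L^2(\Omega)}^2$. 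Choosing $\delta$ small enough to absorb all the $\delta\Vert\nabla\e\Vert_{L^2(\Omega)}^2$ contributions into $\alpha a_{\mathrm{min}}\Vert\nabla\e\Vert_{L^2(\Omega)}^2$, integrating in time on $[0,t]$, and applying Gronwall's inequality to the surviving $C_\delta\Vert\e\Vert_{L^2(\Omega)}^2$ term (with $\Vert\e(0)\Vert_{L^2(\Omega)}=\Vert\mathbf{h}\Vert_{L^2(\Omega)}$) gives \eqref{eqn:stability 1}, the factor $e^{C_\delta T}$ being swallowed into the constants since $T$ is fixed.

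I expect the decisive point to be the second-order part of the precession term: it is only the algebraic cancellation $\sum_{i,j}a^\epsilon_{ij}\,\partial_i\e\times\partial_j\e=0$ that keeps higher Sobolev norms of $\m^\epsilon$ out of $C_\delta$, and one must then verify that the surviving first-order remainder $\nabla\m^\epsilon\cdot(\e\times a^\epsilon\nabla\e)$ is genuinely subcritical for $n\le3$, hence controllable by $\Vert\nabla\m^\epsilon\Vert_{L^4(\Omega)}$ alone; the bookkeeping of the $n=2$ logarithmic loss through Lemma \ref{lemma: disturbed Sobolev inequality} (which is what forces the $\gamma(\epsilon)$ weight and the residual $\epsilon^2\Vert\mathcal{A}_\epsilon\e\Vert_{L^2(\Omega)}^2$ term into the estimate) is the secondary technical hurdle.
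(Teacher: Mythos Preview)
Your proposal is correct and follows essentially the same approach as the paper: pair the equation with $\e$, extract the dissipation $-\alpha\int_\Omega a^\epsilon\nabla\e\cdot\nabla\e$ by integration by parts, estimate $\mathbf{D}_1$ and $\mathbf{D}_2$ via H\"older plus $\Vert\e\Vert_{L^4}\hookrightarrow\Vert\e\Vert_{H^1}$, treat the $\boldsymbol{F}_1$ source with the dimension-dependent Sobolev inequalities (Lemma~\ref{lemma: disturbed Sobolev inequality} in the critical case $n=2$, which is exactly what produces $\gamma(\epsilon)$ and the residual $\epsilon^2\Vert\mathcal{A}_\epsilon\e\Vert_{L^2}^2$), and conclude by Gronwall. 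Your write-up is in fact more explicit than the paper's on the key step for $\mathbf{D}_1$: the paper simply says ``by integration by parts and the same argument for the boundary term'', whereas you unpack the identity $\bdiv(\e\times a^\epsilon\nabla\e)=\e\times\mathcal{A}_\epsilon\e$ via the cancellation $\sum_{i,j}a^\epsilon_{ij}\,\partial_i\e\times\partial_j\e=0$ from the symmetry of $a^\epsilon$, which is indeed what underlies that step; for the boundary term $\int_{\partial\Omega}\mathbf{g}\cdot(\m^\epsilon\times\e)$ you may bypass the fractional multiplier estimate by first bounding $\Vert\m^\epsilon\times\e\Vert_{H^1(\Omega)}\le C\Vert\e\Vert_{H^1(\Omega)}$ (using $\vert\m^\epsilon\vert=1$ and $\Vert\nabla\m^\epsilon\Vert_{L^4}$) and then applying the trace inequality.
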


\begin{proof}
	The inner product between \eqref{eqn:system of e} and $\e$ in $L^2(\Omega)$ leads to
	\begin{equation}
		\begin{aligned}\label{eqn:product with e}
		&\frac{1}{2} \frac{\d}{\d t} \int_{\Omega} \vert \e \vert^2 \d \bx
		-
		\alpha  \int_{\Omega} \widetilde{\mathcal{H}}^\epsilon_e(\e) \cdot \e \d \bx\\
		= & 
		- \int_{\Omega}
		\mathbf{D}_1(\e)  \cdot \e \d \bx 
		-
		\int_{\Omega}
		\mathbf{D}_2(\e) \cdot \e\d \bx
		-
		\int_{\Omega} \boldsymbol{F} \cdot \e \d \bx.
	\end{aligned}
\end{equation}
	Now let us give the estimates to \eqref{eqn:product with e} term by term. Integration by parts for the second term on the left-hand side yields
	\begin{equation*}
		- \int_{\Omega} \widetilde{\mathcal{H}}^\epsilon_e(\e) \cdot \e \d \bx
		\ge
		\sum_{i, j= 1}^{n}
		\int_{\Omega} a^\epsilon_{ij} \frac{\partial\e}{\partial x_i} \cdot
		\frac{\partial\e}{\partial x_j} \d \bx
		-
		\int_{\partial\Omega} \mathbf{g} \cdot  \e \d \bx 
		- 
		C \int_{\Omega} \vert \e \vert^2 \d \bx
	\end{equation*}
	with the boundary term satisfying
	\begin{equation*}
		\begin{aligned}
			\int_{\partial\Omega} \mathbf{g} \cdot  \e \d \bx
			\le
			\Vert \mathbf{g}\Vert_{B^{-1/2,2}(\partial\Omega)}
			\Vert \e\Vert_{B^{1/2,2}(\partial\Omega)}
			\le 
			C\Vert \mathbf{g}\Vert_{B^{-1/2,2}(\partial\Omega)}
			\Vert \e\Vert_{H^1(\Omega)}.
		\end{aligned}
	\end{equation*}
	By integration by parts and the same argument for the boundary term, the first term on the right-hand side can be estimated as
	\begin{equation*}\label{bounded of D_1 e cdot e}
		\begin{aligned}
			- \int_{\Omega}
			\mathbf{D}_1(\e)  \cdot \e \d \bx 
			\le
			& C \int_{\Omega} \vert \e \vert^2 \d \bx
			+
			\delta C \int_{\Omega} \vert \nabla \e \vert^2 \d \bx
			-
			C\Vert \mathbf{g}\Vert_{B^{-1/2,2}(\partial\Omega)}
			\Vert \e\Vert_{H^1(\Omega)}.
		\end{aligned}
	\end{equation*}
	For the second term on the right-hand side of \eqref{eqn:product with e}, using the estimates
	\begin{equation*}
		\begin{aligned}
			&\int_{\Omega}
			\big( \mathcal{B}^\epsilon[\e,\m^\epsilon ] 
			\big)\m^\epsilon \cdot \e \d \bx
			\le  C
			\Vert \nabla \m^\epsilon \Vert_{L^{4}(\Omega)}
			\Vert \e \Vert_{L^{4}(\Omega)}
			\Vert \nabla \e \Vert_{L^{2}(\Omega)} \\
			&\qquad\qquad\qquad\qquad\qquad\qquad+
			C
			\Vert \m^\epsilon \Vert_{L^{4}(\Omega)}
			\Vert \e \Vert_{L^{4}(\Omega)}
			\Vert \e \Vert_{L^{2}(\Omega)},\\
			&\int_{\Omega} g_l^\epsilon[\widetilde{\m}^\epsilon]  \e \cdot \e \d  x
			\le  C
			\Vert \nabla \widetilde{\m}^\epsilon \Vert_{L^{4}(\Omega)}^2
			\Vert \e \Vert_{L^{4}(\Omega)}^2
			+ C
			\Vert \widetilde{\m}^\epsilon \Vert_{L^{4}(\Omega)}^2
			\Vert \e \Vert_{L^{4}(\Omega)}^2,
		\end{aligned}
	\end{equation*}
	and the same argument can be applied to the other terms, we finally obtain by Sobolev inequality
	\begin{equation*}\label{bounded of D_2 e cdot e}
		\begin{aligned}
			- \int_{\Omega}
			\mathbf{D}_2(\e)  \cdot \e \d \bx 
			\le C +
			C \int_{\Omega} \vert \e \vert^2 \d \bx
			+
			\delta C \int_{\Omega} \vert \nabla \e \vert^2 \d \bx,
		\end{aligned}
	\end{equation*}
	where $C = C^0\big(1 + \Vert \nabla \m^\epsilon  \Vert_{L^{4}(\Omega)}^2 + \Vert \nabla \widetilde{\m}^\epsilon  \Vert_{L^{4}(\Omega)}^2\big) $.
	For the last term in \eqref{eqn:product with e}, by the assumption \eqref{assumption of F1 and F2}, we apply Sobolev inequality for $n=1,3$, and apply Lemma \ref{lemma: disturbed Sobolev inequality} for $n=2$, it follows that
	\begin{equation*}
		\begin{aligned}
			-
			\int_{\Omega} \boldsymbol{F}_1 \cdot \e \d \bx
			\le &
			\left\{\begin{aligned}
			& C
			\Vert \boldsymbol{F}_1 \Vert_{L^{1}(\Omega)}^2
			+ \delta\Vert \e \Vert_{H^1(\Omega)}^2,& n=1& \\
			& \begin{aligned}&C
			[\ln(\epsilon^{-1}+1)]^2
			\Vert \boldsymbol{F}_1 \Vert_{L^{1}(\Omega)}^2\\
			&\qquad\qquad+ \delta\Vert \e \Vert_{H^1(\Omega)}^2
			+ \epsilon^2 \Vert \mathcal{A}^\epsilon \e \Vert_{L^2}^2,
			\end{aligned} & n=2& \\
			& C
			\Vert \boldsymbol{F}_1 \Vert_{L^{6/5}(\Omega)}^2
			+ \delta\Vert \e \Vert_{H^1(\Omega)}^2,& n=3&
			\end{aligned}\right.\\
			-
			\int_{\Omega} \boldsymbol{F}_2 \cdot \e \d \bx
			\le& \delta^*
			\Vert \boldsymbol{F}_2 \Vert_{L^2(\Omega)}^2
			+ C
			\Vert \e \Vert_{L^2(\Omega)},
		\end{aligned}
	\end{equation*}
	with any small $\delta$, $\delta^*>0$. Substituting above estimates, one can derive from \eqref{eqn:product with e} that
	\begin{equation*}\label{increasing inequality}
		\begin{aligned}
			\frac{1}{2}\frac{\d}{\d t} \int_{\Omega} \vert \e \vert^2 \d \bx
			+
			(\alpha a_{\mathrm{min}} - 2 \delta) \int_{\Omega} \abs{ \nabla \e }^2 \d \bx
			\le  
			C \int_{\Omega} \vert \e \vert^2 \d \bx
			+ 
			C \Vert \mathbf{g} \Vert^2_{B^{-1/2, 2} (\partial\Omega)}\\
			+ 
			C [\ln(\epsilon^{-1}+1)]^2
			\Vert \boldsymbol{F}_1 \Vert^2_{L^{\sigma} (\Omega)} 
			+
			\delta^*\Vert \boldsymbol{F}_2 \Vert^2_{L^{2} (\Omega)} 
			+
			\epsilon^2 \Vert \mathcal{A}^\epsilon \e \Vert_{L^2}^2.
		\end{aligned}
	\end{equation*}
	Then \eqref{eqn:stability 1} follows directly by taking $\delta$ small enough, and the application of Gr\"{o}nwall's inequality.
\end{proof}

\subsection{Stability in $L^{\infty}(0,T;H^1(\Omega))$}
\begin{theorem}\label{theorem:stability 2}
	Let $\e \in L^\infty(0,T;H^2(\Omega))$ be a strong solution to \eqref{eqn:system of e}. Assume $\mathbf{h} \in H^{1}(\Omega)$, $\mathbf{g}\in H^{1}(0,T; B^{-1/2, 2} (\partial\Omega))$, and $\boldsymbol{F}\in L^2(0,T; L^{2} (\Omega))$,
	it holds
	\begin{equation}\label{eqn:stability}
		\begin{aligned}
			\Vert \nabla \e \Vert^2_{L^\infty(0,T;L^2(\Omega))} 
			\le  &
			C \big( \Vert \mathbf{h} \Vert^2_{H^{1}(\Omega)} 
			+
			\Vert \mathbf{g} \Vert^2_{H^{1}(0,T; B^{-1/2, 2} (\partial\Omega))}
			+
			\Vert \boldsymbol{F} \Vert^2_{L^2(0,T;L^2(\Omega))}  \big) ,
		\end{aligned}
	\end{equation}
	where
	$C$ depends on $\Vert \nabla \m^\epsilon  \Vert_{L^{4}(\Omega)}$, $\Vert \nabla \widetilde{\m}^\epsilon  \Vert_{L^{4}(\Omega)}$ and $\Vert \mathcal{H}^\epsilon_e (\widetilde{\m}^\epsilon) \Vert_{L^{4}(\Omega)}$, but is independent of $t$ and $\epsilon$.
\end{theorem}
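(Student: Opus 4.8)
The plan is to carry out an $H^1$-type energy estimate by testing the equation in \eqref{eqn:system of e} against $-\widetilde{\mathcal{H}}^\epsilon_e(\e)$ rather than against $\e$ as in Theorem~\ref{theorem:stability}. Writing $\mathcal{L}^\epsilon(\e)=\alpha\widetilde{\mathcal{H}}^\epsilon_e(\e)-\mathbf{D}_1(\e)-\mathbf{D}_2(\e)$, the $L^2(\Omega)$ inner product of $\partial_t\e-\mathcal{L}^\epsilon(\e)=\boldsymbol{F}$ with $-\widetilde{\mathcal{H}}^\epsilon_e(\e)$ gives
\begin{equation*}
-\int_\Omega\partial_t\e\cdot\widetilde{\mathcal{H}}^\epsilon_e(\e)\,\d\bx+\alpha\int_\Omega\abs{\widetilde{\mathcal{H}}^\epsilon_e(\e)}^2\,\d\bx=\int_\Omega\big(\mathbf{D}_1(\e)+\mathbf{D}_2(\e)\big)\cdot\widetilde{\mathcal{H}}^\epsilon_e(\e)\,\d\bx-\int_\Omega\boldsymbol{F}\cdot\widetilde{\mathcal{H}}^\epsilon_e(\e)\,\d\bx .
\end{equation*}
Integrating by parts in space and using that $\mathbf{v}\mapsto M^\epsilon\hd[M^\epsilon\mathbf{v}]$ is symmetric and negative semi-definite on $L^2(\Omega)$, the first term equals $\tfrac12\tfrac{\d}{\d t}\Psi(\e)-\int_{\partial\Omega}\partial_t\e\cdot\mathbf{g}$ with
\begin{equation*}
\Psi(\e):=\int_\Omega a^\epsilon\nabla\e:\nabla\e\,\d\bx+\int_\Omega K^\epsilon(\e\cdot\bu)^2\,\d\bx-\mu_0\int_\Omega M^\epsilon\e\cdot\hd[M^\epsilon\e]\,\d\bx\ \ge\ a_{\mathrm{min}}\Vert\nabla\e\Vert_{L^2(\Omega)}^2 .
\end{equation*}
The essential new ingredient, absent from the $L^2$ estimate, is the coercive term $\alpha\Vert\widetilde{\mathcal{H}}^\epsilon_e(\e)\Vert_{L^2(\Omega)}^2$: since $\widetilde{\mathcal{H}}^\epsilon_e(\e)=\mathcal{A}_\epsilon\e$ plus terms bounded by $C\Vert\e\Vert_{L^2(\Omega)}$ (using \eqref{bound of stray field}), it controls $\tfrac12\Vert\mathcal{A}_\epsilon\e\Vert_{L^2(\Omega)}^2-C\Vert\e\Vert_{L^2(\Omega)}^2$, and hence, by Lemma~\ref{lemma: regularity for epslon}, it controls $\Vert\nabla\e\Vert_{L^6(\Omega)}^2$ up to $C\Vert\e\Vert_{L^2(\Omega)}^2+C\Vert\mathbf{g}\Vert_{B^{-1/2,2}(\partial\Omega)}^2$.

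For the boundary contribution I would integrate by parts in time, $-\int_0^t\!\int_{\partial\Omega}\partial_s\e\cdot\mathbf{g}=-\big[\int_{\partial\Omega}\e\cdot\mathbf{g}\big]_0^t+\int_0^t\!\int_{\partial\Omega}\e\cdot\partial_s\mathbf{g}$, and bound each boundary pairing by $\Vert\e\Vert_{B^{1/2,2}(\partial\Omega)}\Vert\mathbf{g}\Vert_{B^{-1/2,2}(\partial\Omega)}\le C\Vert\e\Vert_{H^1(\Omega)}\Vert\mathbf{g}\Vert_{B^{-1/2,2}(\partial\Omega)}$ by the trace theorem; Young's inequality then moves a small multiple of $\Vert\e\Vert_{H^1(\Omega)}^2\le C\Psi(\e)+C\Vert\e\Vert_{L^2(\Omega)}^2$ to the left, the endpoint term $\int_{\partial\Omega}\e(t)\cdot\mathbf{g}(t)$ being handled via the one-dimensional-in-time embedding $H^1\hookrightarrow L^\infty$, and the surviving data is controlled by $\Vert\mathbf{h}\Vert_{H^1(\Omega)}^2$ at $t=0$ and by $\Vert\mathbf{g}\Vert_{H^1(0,T;B^{-1/2,2}(\partial\Omega))}^2$. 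The forcing term is dominated by $\tfrac{\alpha}{8}\Vert\widetilde{\mathcal{H}}^\epsilon_e(\e)\Vert_{L^2(\Omega)}^2+C\Vert\boldsymbol{F}\Vert_{L^2(\Omega)}^2$.

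It then remains to estimate $\int_\Omega(\mathbf{D}_1(\e)+\mathbf{D}_2(\e))\cdot\widetilde{\mathcal{H}}^\epsilon_e(\e)$. In $\mathbf{D}_1(\e)=\m^\epsilon\times\widetilde{\mathcal{H}}^\epsilon_e(\e)+\e\times\mathcal{H}^\epsilon_e(\widetilde{\m}^\epsilon)$ the first summand is pointwise orthogonal to $\widetilde{\mathcal{H}}^\epsilon_e(\e)$ and drops out, while the second is bounded by Hölder with exponents $(4,4,2)$, by Sobolev ($H^1(\Omega)\hookrightarrow L^4(\Omega)$ for $n\le3$) and by Young, absorbing a multiple of $\Vert\widetilde{\mathcal{H}}^\epsilon_e(\e)\Vert_{L^2(\Omega)}^2$ and leaving $C\Vert\mathcal{H}^\epsilon_e(\widetilde{\m}^\epsilon)\Vert_{L^4(\Omega)}^2(\Vert\nabla\e\Vert_{L^2(\Omega)}^2+\Vert\e\Vert_{L^2(\Omega)}^2)$. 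In $\mathbf{D}_2(\e)$, the pieces with no factor $\nabla\e$ (those containing $\ha\cdot\e$ and the anisotropy/stray-field parts of $\mathcal{B}^\epsilon$ and $g_l^\epsilon$) are treated the same way using $\abs{\m^\epsilon}=\abs{\widetilde{\m}^\epsilon}\le C$ and \eqref{bound of stray field}; the delicate ones carry a factor $\nabla\e$, coming from $\mathcal{B}^\epsilon[\e,\m^\epsilon]$, $\mathcal{B}^\epsilon[\widetilde{\m}^\epsilon,\e]$ and $g_l^\epsilon[\widetilde{\m}^\epsilon]$, and are bounded by $\Vert\nabla\e\Vert_{L^4(\Omega)}(\Vert\nabla\m^\epsilon\Vert_{L^4(\Omega)}+\Vert\nabla\widetilde{\m}^\epsilon\Vert_{L^4(\Omega)})\Vert\widetilde{\mathcal{H}}^\epsilon_e(\e)\Vert_{L^2(\Omega)}$. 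Here I would interpolate $\Vert\nabla\e\Vert_{L^4(\Omega)}\le C\Vert\nabla\e\Vert_{L^2(\Omega)}^{1/4}\Vert\nabla\e\Vert_{L^6(\Omega)}^{3/4}$ (valid for $n\le3$) and insert the $\Vert\nabla\e\Vert_{L^6(\Omega)}$ bound from the coercivity estimate above, so that $\Vert\widetilde{\mathcal{H}}^\epsilon_e(\e)\Vert_{L^2(\Omega)}$ enters with the subcritical power $7/4<2$; Young's inequality then absorbs a small multiple of $\Vert\widetilde{\mathcal{H}}^\epsilon_e(\e)\Vert_{L^2(\Omega)}^2$, leaving terms of the form $C(1+\Vert\nabla\m^\epsilon\Vert_{L^4(\Omega)}^8+\Vert\nabla\widetilde{\m}^\epsilon\Vert_{L^4(\Omega)}^8)\Vert\nabla\e\Vert_{L^2(\Omega)}^2+C\Vert\e\Vert_{L^2(\Omega)}^2+C\Vert\mathbf{g}\Vert_{B^{-1/2,2}(\partial\Omega)}^2$.

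Collecting everything gives, after fixing all absorption constants small,
\begin{equation*}
\frac{\d}{\d t}\Psi(\e)+\frac{\alpha}{2}\Vert\widetilde{\mathcal{H}}^\epsilon_e(\e)\Vert_{L^2(\Omega)}^2\le C\big(\Psi(\e)+\Vert\e\Vert_{L^2(\Omega)}^2\big)+C\Vert\boldsymbol{F}\Vert_{L^2(\Omega)}^2+C\Vert\mathbf{g}\Vert_{B^{-1/2,2}(\partial\Omega)}^2+(\text{time boundary data}),
\end{equation*}
with $C$ depending only on the norms of $\m^\epsilon$, $\widetilde{\m}^\epsilon$, $\mathcal{H}^\epsilon_e(\widetilde{\m}^\epsilon)$ listed in the statement. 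Integrating in $t$, using $\Psi(\e(0))\le C\Vert\mathbf{h}\Vert_{H^1(\Omega)}^2$, $\Psi(\e)\ge a_{\mathrm{min}}\Vert\nabla\e\Vert_{L^2(\Omega)}^2$, and feeding in the $L^2$ stability bound of Theorem~\ref{theorem:stability} applied with the splitting $\boldsymbol{F}_1=0$, $\boldsymbol{F}_2=\boldsymbol{F}$ (which controls $\Vert\e\Vert^2_{L^\infty(0,T;L^2(\Omega))}+\Vert\nabla\e\Vert^2_{L^2(0,T;L^2(\Omega))}$ by the data), the residual $\epsilon^2\Vert\mathcal{A}_\epsilon\e\Vert_{L^2(0,T;L^2(\Omega))}^2$ appearing there is itself bounded — via $\Vert\mathcal{A}_\epsilon\e\Vert_{L^2(\Omega)}\le 2\Vert\widetilde{\mathcal{H}}^\epsilon_e(\e)\Vert_{L^2(\Omega)}+C\Vert\e\Vert_{L^2(\Omega)}$ — by a constant times $\int_0^T\Vert\widetilde{\mathcal{H}}^\epsilon_e(\e)\Vert_{L^2(\Omega)}^2$, hence is absorbed for $\epsilon$ small; a Grönwall argument in $\Psi(\e)$ then yields \eqref{eqn:stability}. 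I expect the genuine obstacle to be precisely the $\nabla\e$-bearing part of $\mathbf{D}_2(\e)$: in dimension $n=3$ these factors are supercritical for an $H^1$ bound, which is why a naive energy estimate only produces uniform $H^1$ boundedness without a rate. The remedy is exactly the combination above — testing against $-\widetilde{\mathcal{H}}^\epsilon_e(\e)$ so that the degenerate (damping) structure supplies the coercive quantity $\alpha\Vert\widetilde{\mathcal{H}}^\epsilon_e(\e)\Vert_{L^2(\Omega)}^2$, together with the $\epsilon$-uniform $W^{1,6}$ estimate of Lemma~\ref{lemma: regularity for epslon} and an $L^4$ interpolation — so that the offending terms re-enter with a subcritical power and can be absorbed. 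The only other point requiring care is the boundary pairing $\int_{\partial\Omega}\partial_t\e\cdot\mathbf{g}$, whose integration by parts in time is what forces the $H^1$-in-time hypothesis on $\mathbf{g}$.
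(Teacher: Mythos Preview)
Your proposal is correct and follows essentially the same route as the paper: test against $-\widetilde{\mathcal{H}}^\epsilon_e(\e)$, use the orthogonality $\m^\epsilon\times\widetilde{\mathcal{H}}^\epsilon_e(\e)\cdot\widetilde{\mathcal{H}}^\epsilon_e(\e)=0$ in $\mathbf{D}_1$, interpolate $\Vert\nabla\e\Vert_{L^4}$ between $L^2$ and $L^6$ in $\mathbf{D}_2$ and bound $\Vert\nabla\e\Vert_{L^6}$ via Lemma~\ref{lemma: regularity for epslon}, integrate the boundary pairing by parts in time, and conclude by Gr\"onwall. Two minor remarks: the paper records the interpolation with exponents $1/3,\,2/3$ rather than your (correct) $1/4,\,3/4$, which is immaterial since either gives a subcritical power of $\Vert\widetilde{\mathcal{H}}^\epsilon_e(\e)\Vert_{L^2}$; and the paper closes the Gr\"onwall directly on $\Vert\e\Vert_{H^1}^2$ without invoking Theorem~\ref{theorem:stability}, whereas you route the $\Vert\e\Vert_{L^2}$ control through that theorem and absorb its residual $\epsilon^2\Vert\mathcal{A}_\epsilon\e\Vert_{L^2(0,T;L^2)}^2$ for small $\epsilon$ --- either closure works.
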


\begin{proof}
	The inner product between \eqref{eqn:system of e} and $\widetilde{\mathcal{H}}^\epsilon_e(\e)$ in $L^2(\Omega)$ leads to
	\begin{equation}\label{eqn:product with H}
		\begin{aligned}
			& - \int_{\Omega} \partial_t \e \cdot \widetilde{\mathcal{H}}^\epsilon_e(\e) \d \bx
			+
			\alpha  \int_{\Omega} \widetilde{\mathcal{H}}^\epsilon_e(\e) \cdot \widetilde{\mathcal{H}}^\epsilon_e(\e) \d \bx\\
			= &
			\int_{\Omega}
			\mathbf{D}_1 (\e)  \cdot \widetilde{\mathcal{H}}^\epsilon_e(\e) \d \bx 
			+
			\int_{\Omega}
			\mathbf{D}_2(\e) \cdot \widetilde{\mathcal{H}}^\epsilon_e(\e) \d \bx
			+
			\int_{\Omega}
			\boldsymbol{F} \cdot \widetilde{\mathcal{H}}^\epsilon_e(\e) \d \bx.
		\end{aligned}
	\end{equation}
	In the following we give the estimates to \eqref{eqn:product with H} term by term. 
	Note that integration by parts yields
	\begin{align*}
		- \int_{\Omega} \partial_t \e \cdot \widetilde{\mathcal{H}}^\epsilon_e(\e) \d \bx
		= &
		\frac{\d}{\d t} \G^\epsilon[\e]
		-
		\int_{\partial\Omega} \partial_t \e \cdot \mathbf{g} \d \bx, \\
		= &
		\frac{\d}{\d t} \G^\epsilon[\e]
		- \partial_t \int_{\partial\Omega} \e \cdot \mathbf{g} \d \bx 
		-
		C\Vert \partial_t \mathbf{g}\Vert_{B^{-1/2,2}(\partial\Omega)}
		\Vert \e\Vert_{H^1(\Omega)}.
	\end{align*}
	Using the fact $\m^\epsilon\times \widetilde{\mathcal{H}}^\epsilon_e(\e) \cdot\widetilde{\mathcal{H}}^\epsilon_e(\e)=0$, the first term on the right-hand side of \eqref{eqn:product with H} can be estimate by Sobolev inequality as 
	\begin{align*}
		\int_{\Omega}
		\mathbf{D}_1 (\e) \cdot \widetilde{\mathcal{H}}^\epsilon_e(\e) \d \bx 
		&\le 
		C \Vert \mathcal{H}^\epsilon_e (\widetilde{\m}^\epsilon) \Vert_{L^{4}(\Omega)}
		\Vert \e \Vert_{L^{4}(\Omega)}
		\Vert \widetilde{\mathcal{H}}^\epsilon_e(\e) \Vert_{L^{2}(\Omega)} \nn\\
		&\le
		C \Vert \e \Vert_{H^{1}(\Omega)}^2
		+
		\delta C\Vert \widetilde{\mathcal{H}}^\epsilon_e(\e) \Vert_{L^{2}(\Omega)}^2, \label{bounded of D_1 e dot H e}
	\end{align*}
	where $C = C^0\big(1 + \Vert \mathcal{H}^\epsilon_e (\widetilde{\m}^\epsilon) \Vert_{L^{4}(\Omega)}^2 \big) $. For the second term on the right-hand side of \eqref{eqn:product with H}, note that we have the estimate
	\begin{equation*}
		\begin{aligned}
			\int_{\Omega}
			\big( \mathcal{B}^\epsilon[\e,\m^\epsilon ] 
			\big)\m^\epsilon 
			\cdot \widetilde{\mathcal{H}}^\epsilon_e(\e) \d \bx
			\le & C
			\Vert \nabla \m^\epsilon  \Vert_{L^{4}(\Omega)}
			\Vert \nabla \e \Vert_{L^{4}(\Omega)}
			\Vert \widetilde{\mathcal{H}}^\epsilon_e(\e) \Vert_{L^{2}(\Omega)} \\
			& + C
			\Vert \m^\epsilon \Vert_{L^{4}(\Omega)}
			\Vert \e \Vert_{L^{4}(\Omega)}
			\Vert \widetilde{\mathcal{H}}^\epsilon_e(\e) \Vert_{L^{2}(\Omega)},
		\end{aligned}
	\end{equation*}
	in which 
	one can deduce
	\begin{equation*}
		\begin{aligned}
			\Vert \nabla \e \Vert_{L^{4}(\Omega)}
			\le &
			\Vert \nabla \e \Vert_{L^{2}(\Omega)}^{1/3} 
			\Vert \nabla \e \Vert_{L^{6}(\Omega)}^{2/3} \\
			\le & C
			\Vert \nabla \e \Vert_{L^{2}(\Omega)}^{1/3} 
			(1 + \Vert \widetilde{\mathcal{H}}^\epsilon_e(\e) \Vert_{L^{2}(\Omega)} 
			+ 
			\Vert \mathbf{g}\Vert_{B^{-1/2,2}(\partial\Omega)})^{2/3},
		\end{aligned}
	\end{equation*}
	using interpolation inequality and Lemma \ref{lemma: regularity for epslon}. The other terms can be estimated in the same fashion. 
	After the application of Young's inequality, one finally obtains
	\begin{equation*}\label{bounded of D_2 e dot H e}
		\int_{\Omega}
		\mathbf{D}_2 (\e) \cdot \widetilde{\mathcal{H}}^\epsilon_e(\e) \d \bx 
		\le 
		C \Vert \e \Vert_{H^{1}(\Omega)}^2
		+
		\delta C\Vert \widetilde{\mathcal{H}}^\epsilon_e(\e) \Vert_{L^{2}(\Omega)}^2
		+
		C \Vert \mathbf{g}\Vert_{B^{-1/2,2}(\partial\Omega)}^2,
	\end{equation*}
	where $C = C^0(1 + \Vert \nabla \m^\epsilon  \Vert_{L^{4}(\Omega)}^2 + \Vert \nabla \widetilde{\m}^\epsilon  \Vert_{L^{4}(\Omega)}^2)$.
	Substituting above estimates into \eqref{eqn:product with H}, 
	we arrive at
	\begin{align*}
		\frac{\d}{\d t} \G^\epsilon[\e]
		+ &
		(\alpha - C \delta)\int_{\Omega}
		\vert \widetilde{\mathcal{H}}^\epsilon_e(\e) \vert^2 \d \bx
		- 
		\partial_t \int_{\partial\Omega} \e \cdot \mathbf{g} \d \bx\\
		\le &
		C \big(\Vert \e \Vert_{H^1(\Omega)}^2 
		+
		\Vert \boldsymbol{F} \Vert_{L^{2}(\Omega)}^2 
		+
		\Vert \mathbf{g}\Vert_{B^{-1/2,2}(\partial\Omega)}
		+
		\Vert \partial_t \mathbf{g}\Vert_{B^{-1/2,2}(\partial\Omega)}\big).
	\end{align*}
	Integrating the above inequality over $[0,t]$ with $0<t< T$ and using the facts
	\begin{align*}
		\int_{\partial\Omega} \e \cdot \mathbf{g} \d \bx
		\le &
		\delta \Vert \e\Vert_{H^{1}(\Omega)}^2
		+ C
		\Vert \mathbf{g}\Vert_{B^{-1/2,2}(\partial\Omega)}^2, \\
		\G^\epsilon[\e]
		\ge &
		\frac{a_{\mathrm{min}}}{2}\Vert \nabla \e \Vert_{L^{2}(\Omega)}^2 - C,
	\end{align*}
	one can finally derive
	\begin{multline*}
		(\frac{a_{\mathrm{min}}}{2} - \delta )\Vert \nabla \e(t) \Vert_{L^{2}(\Omega)}^2
		+ 
		(\alpha - C \delta)\int_0^t
		\Vert \widetilde{\mathcal{H}}^\epsilon_e(\e) \Vert_{L^2(\Omega)}^2 \d \tau \\
		\le 
		C \int_0^t \big(\Vert \e \Vert_{L^2(\Omega)}^2 
		+
		\Vert \nabla\e \Vert_{L^2(\Omega)}^2 
		+
		\Vert \boldsymbol{F} \Vert_{L^{2}(\Omega)}^2 
		+
		\Vert \partial_t \mathbf{g}\Vert_{B^{-1/2,2}(\partial\Omega)}
		 \big) \d \tau
		+ \mathcal{J}(\mathbf{h}),
	\end{multline*}
	where $\mathcal{J}(\mathbf{h})$ yields
	\begin{equation*}
		\mathcal{J}(\mathbf{h}) 
		=
		\G^\epsilon[\mathbf{h}]
		-
		\int_{\partial\Omega} \mathbf{h}
		\cdot  \frac{\partial}{\partial \boldsymbol{\nu}^\epsilon}
		\mathbf{h}  \d \bx 
		\le 
		C \Vert \mathbf{h} \Vert_{H^{1}(\Omega)}^2
		+
		\Vert \frac{\partial}{\partial \boldsymbol{\nu}^\epsilon}
		\mathbf{h}\Vert_{B^{-1/2,2}(\partial\Omega)}^2. 
	\end{equation*}
	\eqref{eqn:stability} is then derived after taking $\delta$ small enough and the application of Gr\"{o}nwall's inequality.
\end{proof}

\section{Regularity}\label{Preliminary} 
In the estimate of boundary corrector and stability analysis by Theorem \ref{theorem:omega}, Theorem \ref{theorem:stability}, Theorem \ref{theorem:stability 2}, the constant we deduced rely on the value of $\Vert \mathcal{A}_\epsilon \m^\epsilon \Vert_{L^{2}(\Omega)}$ and $\Vert \nabla \m^\epsilon\Vert_{L^{6}(\Omega)}$.
In this section, we introduce the uniform regularity on $\m^\epsilon$, over a time interval independent of $\epsilon$. For this purpose, we intend to derive a structure-preserving energy inequality, in which the degenerate term are kept in the energy. 

First, let us introduce an interpolation inequality of the effective field $\mathcal{H}^\epsilon_e(\m^\epsilon)$ for some $S^2$-valued function $\m^\epsilon$, which is the generalization of \eqref{term of high order in intro}.
The following estimates will be used:
\begin{gather}\label{eqn:nabla m dot nabla m}
	a_{\mathrm{max}}^{-1}
	\Vert \m^\epsilon \cdot\mathcal{A}_\epsilon \m^\epsilon \Vert_{L^{3}(\Omega)}^3
	\le
	\Vert \nabla \m^\epsilon \Vert_{L^{6}(\Omega)}^6
	\le 
	a_{\mathrm{min}}^{-1}
	\Vert \mathcal{A}_\epsilon \m^\epsilon \Vert_{L^{3}(\Omega)}^3,\\
	\label{relation between A and H}
	\Vert \mathcal{A}_\epsilon \m^\epsilon \Vert_{L^{p}(\Omega)} - C_p
	\le 
	\Vert \mathcal{H}^\epsilon_e(\m^\epsilon) \Vert_{L^{p}(\Omega)}
	\le 
	\Vert \mathcal{A}_\epsilon \m^\epsilon \Vert_{L^{p}(\Omega)} + C_p,
\end{gather}
with $1<p<+\infty$, here the first line follows from the fact $- a^\epsilon \abs{\nabla \m^\epsilon}^2 = \m^\epsilon \cdot\mathcal{A}_\epsilon \m^\epsilon$ by $\abs{\m^\epsilon} = 1$ and assumption of $a^\epsilon$ in \eqref{uniformly coercive}, and in second line the estimate \eqref{bound of stray field} is used. We also introduce a orthogonal decomposition to any vector $\mathbf{a}$ as
\begin{equation}\label{eqn: orthogonality decomposition}
	\mathbf{a}
	=
	(\m^\epsilon \cdot \mathbf{a}) \m^\epsilon
	-
	\m^\epsilon \times (\m^\epsilon \times \mathbf{a}).
\end{equation}

\begin{lemma}\label{lemma:estimate of H in L3}
	Given $\m^\epsilon\in H^3(\Omega)$  that satisfies $\abs{\m^\epsilon} = 1$ and Neumann boundary condition $\boldsymbol{\nu}\cdot a^\epsilon\nabla \m^\epsilon = 0$, then it holds for $n\le 3$ and any $0<\delta<1$
	\begin{equation}\label{term of high order}
		\Vert \mathcal{H}^\epsilon_e(\m^\epsilon) \Vert_{L^{3}(\Omega)}^{3}
		\le 
		C_\delta + C_\delta\Vert \mathcal{H}^\epsilon_e (\m^\epsilon) \Vert_{L^{2}(\Omega)}^{6}
		+ \delta
		\Vert \m^\epsilon \times \nabla \mathcal{H}^\epsilon_e(\m^\epsilon) \Vert_{L^{2}(\Omega)}^{2},
	\end{equation}
	where $C_\delta$ is a constant depending on $\delta$ but independent of $\epsilon$.
\end{lemma}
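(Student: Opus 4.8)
The plan is to reduce the $L^3$ bound on $\mathcal{H}^\epsilon_e(\m^\epsilon)$ to an interpolation inequality between $L^2$ of $\mathcal{H}^\epsilon_e(\m^\epsilon)$ and $L^2$ of $\m^\epsilon\times\nabla\mathcal{H}^\epsilon_e(\m^\epsilon)$, using the length constraint $\abs{\m^\epsilon}=1$ to convert full gradient control into tangential gradient control up to lower-order terms. First I would use \eqref{relation between A and H} to pass between $\mathcal{H}^\epsilon_e(\m^\epsilon)$ and $\mathcal{A}_\epsilon\m^\epsilon$ freely (the difference being a bounded lower-order field), so it suffices to estimate $\Vert\mathcal{A}_\epsilon\m^\epsilon\Vert_{L^3(\Omega)}^3$, equivalently $\Vert\nabla\m^\epsilon\Vert_{L^6(\Omega)}^6$ via \eqref{eqn:nabla m dot nabla m}. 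The standard Gagliardo--Nirenberg inequality in dimension $n\le 3$ gives
\begin{equation*}
	\Vert\mathcal{H}^\epsilon_e(\m^\epsilon)\Vert_{L^3(\Omega)}
	\le C\Vert\mathcal{H}^\epsilon_e(\m^\epsilon)\Vert_{L^2(\Omega)}^{1-\vartheta}
	\Vert\mathcal{H}^\epsilon_e(\m^\epsilon)\Vert_{H^1(\Omega)}^{\vartheta}
	+ C\Vert\mathcal{H}^\epsilon_e(\m^\epsilon)\Vert_{L^2(\Omega)},
\end{equation*}
with $\vartheta=n/6\le 1/2$, so that cubing and applying Young's inequality yields $\Vert\mathcal{H}^\epsilon_e(\m^\epsilon)\Vert_{L^3}^3\le C_\delta\Vert\mathcal{H}^\epsilon_e(\m^\epsilon)\Vert_{L^2}^{3(1-\vartheta)/(1-\vartheta)}\dots$; the key point is that the full $H^1$ norm of $\mathcal{H}^\epsilon_e(\m^\epsilon)$ appears, and I must replace $\Vert\nabla\mathcal{H}^\epsilon_e(\m^\epsilon)\Vert_{L^2}$ by $\Vert\m^\epsilon\times\nabla\mathcal{H}^\epsilon_e(\m^\epsilon)\Vert_{L^2}$ plus controllable terms.

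The heart of the argument is this replacement. Using the orthogonal decomposition \eqref{eqn: orthogonality decomposition} applied to $\nabla\mathcal{H}^\epsilon_e(\m^\epsilon)$ (componentwise in the spatial derivative), I write
\begin{equation*}
	\nabla\mathcal{H}^\epsilon_e(\m^\epsilon)
	= \big(\m^\epsilon\cdot\nabla\mathcal{H}^\epsilon_e(\m^\epsilon)\big)\m^\epsilon
	- \m^\epsilon\times\big(\m^\epsilon\times\nabla\mathcal{H}^\epsilon_e(\m^\epsilon)\big),
\end{equation*}
so $\Vert\nabla\mathcal{H}^\epsilon_e(\m^\epsilon)\Vert_{L^2}\le \Vert\m^\epsilon\cdot\nabla\mathcal{H}^\epsilon_e(\m^\epsilon)\Vert_{L^2} + \Vert\m^\epsilon\times\nabla\mathcal{H}^\epsilon_e(\m^\epsilon)\Vert_{L^2}$. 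For the normal component, differentiating $\m^\epsilon\cdot\mathcal{H}^\epsilon_e(\m^\epsilon)=-g_l^\epsilon[\m^\epsilon]$ (the identity already used in the consistency section, valid since $\abs{\m^\epsilon}=1$) gives $\m^\epsilon\cdot\nabla\mathcal{H}^\epsilon_e(\m^\epsilon) = -\nabla g_l^\epsilon[\m^\epsilon] - \nabla\m^\epsilon\cdot\mathcal{H}^\epsilon_e(\m^\epsilon)$, and here every term is at most quadratic in $(\nabla\m^\epsilon,\mathcal{H}^\epsilon_e(\m^\epsilon))$ together with one extra derivative landing on $\nabla\m^\epsilon$ inside $g_l^\epsilon$; by \eqref{eqn:nabla m dot nabla m} and \eqref{relation between A and H} these reduce to expressions bounded by $\Vert\nabla\m^\epsilon\Vert_{L^6}$-type and $\Vert\mathcal{H}^\epsilon_e(\m^\epsilon)\Vert_{L^6}$-type quantities, which in turn are controlled by $\Vert\mathcal{H}^\epsilon_e(\m^\epsilon)\Vert_{L^3}$ (again via \eqref{eqn:nabla m dot nabla m}: $\Vert\nabla\m^\epsilon\Vert_{L^6}^2$ is comparable to $\Vert\m^\epsilon\cdot\mathcal{A}_\epsilon\m^\epsilon\Vert_{L^3}\le\Vert\mathcal{A}_\epsilon\m^\epsilon\Vert_{L^3}$). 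One must be careful that the highest-order term $\nabla^2\m^\epsilon$ hidden in $\nabla g_l^\epsilon$ (through $\nabla(a^\epsilon|\nabla\m^\epsilon|^2)$, which contains $\nabla\m^\epsilon\cdot\nabla^2\m^\epsilon$) is handled: one pairs $\nabla\m^\epsilon$ with $\nabla\mathcal{A}_\epsilon\m^\epsilon$ only after using that $\m^\epsilon\cdot\nabla\mathcal{A}_\epsilon\m^\epsilon$ itself is lower order, so the genuinely second-order piece always appears as $\m^\epsilon\times\nabla\mathcal{A}_\epsilon\m^\epsilon$, i.e. inside the tangential term we are allowed to keep.

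Assembling: $\Vert\m^\epsilon\cdot\nabla\mathcal{H}^\epsilon_e(\m^\epsilon)\Vert_{L^2}\le C + C\Vert\mathcal{H}^\epsilon_e(\m^\epsilon)\Vert_{L^3}^{3/2}$ (the exponent $3/2$ coming from quadratic-in-$L^6$ terms bounded by $(\Vert\mathcal{H}^\epsilon_e\Vert_{L^3}^{1/2})^2\cdot\Vert\mathcal{H}^\epsilon_e\Vert_{L^3}^{1/2}$-style counting; the precise exponent is what one tunes so that Young's inequality closes). Feeding this into the Gagliardo--Nirenberg estimate, one gets
\begin{equation*}
	\Vert\mathcal{H}^\epsilon_e(\m^\epsilon)\Vert_{L^3}^3
	\le C_\delta + C_\delta\Vert\mathcal{H}^\epsilon_e(\m^\epsilon)\Vert_{L^2}^6
	+ \delta\Vert\m^\epsilon\times\nabla\mathcal{H}^\epsilon_e(\m^\epsilon)\Vert_{L^2}^2
	+ \tfrac12\Vert\mathcal{H}^\epsilon_e(\m^\epsilon)\Vert_{L^3}^3,
\end{equation*}
and absorbing the last term to the left proves \eqref{term of high order}. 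The main obstacle is bookkeeping the orders of differentiation in $\nabla g_l^\epsilon$ to guarantee that no uncontrolled $\nabla^2\m^\epsilon$ survives outside the tangential term $\m^\epsilon\times\nabla\mathcal{H}^\epsilon_e(\m^\epsilon)$, and checking that the Neumann boundary condition $\boldsymbol{\nu}\cdot a^\epsilon\nabla\m^\epsilon=0$ is what makes the integration by parts in the Gagliardo--Nirenberg step (or in an equivalent direct $L^6$ elliptic estimate à la Lemma \ref{lemma: regularity for epslon}) produce no boundary contribution; the stray-field contribution to $\mathcal{H}^\epsilon_e$ is harmless because \eqref{bound of stray field} bounds it by $\Vert\m^\epsilon\Vert_{L^p}\le C$ and its gradient by $\Vert\nabla\m^\epsilon\Vert_{L^p}$, which is already accounted for.
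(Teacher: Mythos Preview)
Your plan has a genuine gap at the ``heart of the argument,'' namely the step where you try to control the normal component $\m^\epsilon\cdot\nabla\mathcal{H}^\epsilon_e(\m^\epsilon)$ by differentiating the identity $\m^\epsilon\cdot\mathcal{H}^\epsilon_e(\m^\epsilon)=-g_l^\epsilon[\m^\epsilon]$. The resulting term $\nabla g_l^\epsilon[\m^\epsilon]$ is \emph{not} lower order in the sense you need: its leading part is $\nabla\big(a^\epsilon|\nabla\m^\epsilon|^2\big)$, which produces both $(\nabla a^\epsilon)|\nabla\m^\epsilon|^2$ and $a^\epsilon\nabla^2\m^\epsilon\cdot\nabla\m^\epsilon$. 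The first of these carries $\nabla a^\epsilon=\epsilon^{-1}(\nabla_{\by}a)(\bx/\epsilon)$, so its $L^2$ norm is of order $\epsilon^{-1}$ and the constant $C_\delta$ would depend on $\epsilon$. The second involves the full Hessian $\nabla^2\m^\epsilon$, and for oscillatory operators there is no uniform $W^{2,p}$ estimate: $\Vert\nabla^2\m^\epsilon\Vert_{L^3}$ is \emph{not} bounded by $\Vert\mathcal{A}_\epsilon\m^\epsilon\Vert_{L^3}$ independently of $\epsilon$ (only the $W^{1,p}$ estimate of Lemma~\ref{lemma: regularity for epslon} is uniform). Your remark that ``the genuinely second-order piece always appears as $\m^\epsilon\times\nabla\mathcal{A}_\epsilon\m^\epsilon$'' is not correct: $a^\epsilon_{ij}\partial_k\partial_i\m^\epsilon\cdot\partial_j\m^\epsilon$ (for each $k$) is a second-order quantity that is neither $\mathcal{A}_\epsilon\m^\epsilon$ nor $\nabla\mathcal{A}_\epsilon\m^\epsilon$, and there is no algebraic identity reducing it to the tangential third-order term.

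The paper avoids both problems by applying the orthogonal decomposition \eqref{eqn: orthogonality decomposition} to $\mathcal{H}^\epsilon_e(\m^\epsilon)$ itself rather than to $\nabla\mathcal{H}^\epsilon_e(\m^\epsilon)$. One splits
\[
\Vert\mathcal{H}^\epsilon_e(\m^\epsilon)\Vert_{L^3}^3
\le \int_\Omega |\m^\epsilon\cdot\mathcal{H}^\epsilon_e(\m^\epsilon)|^3\,\d\bx
+ \int_\Omega |\m^\epsilon\times\mathcal{H}^\epsilon_e(\m^\epsilon)|^3\,\d\bx
=: \mathcal{I}_1+\mathcal{I}_2.
\]
For $\mathcal{I}_1$ the pointwise identity $\m^\epsilon\cdot\mathcal{H}^\epsilon_e(\m^\epsilon)=-g_l^\epsilon[\m^\epsilon]$ gives $\mathcal{I}_1\le C+C\Vert\nabla\m^\epsilon\Vert_{L^6}^6$, and now the uniform $W^{1,6}$ estimate (Lemma~\ref{lemma: regularity for epslon}) with the homogeneous Neumann condition yields $\Vert\nabla\m^\epsilon\Vert_{L^6}^6\le C\Vert\mathcal{A}_\epsilon\m^\epsilon\Vert_{L^2}^6\le C+C\Vert\mathcal{H}^\epsilon_e(\m^\epsilon)\Vert_{L^2}^6$; no derivative of $g_l^\epsilon$ is ever taken. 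For $\mathcal{I}_2$ one applies the Gagliardo--Nirenberg/Sobolev step directly to the \emph{tangential} quantity $\m^\epsilon\times\mathcal{H}^\epsilon_e(\m^\epsilon)$, so the $H^1$ norm that appears is $\Vert\nabla(\m^\epsilon\times\mathcal{H}^\epsilon_e)\Vert_{L^2}$; by the product rule this is $\Vert\m^\epsilon\times\nabla\mathcal{H}^\epsilon_e\Vert_{L^2}+\Vert\nabla\m^\epsilon\times\mathcal{H}^\epsilon_e\Vert_{L^2}$, the second being bounded by $\Vert\nabla\m^\epsilon\Vert_{L^6}^6+\Vert\mathcal{H}^\epsilon_e\Vert_{L^3}^3$ and hence absorbable. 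The moral is: decompose \emph{before} interpolating, so that the gradient never hits $g_l^\epsilon$ or $a^\epsilon$.
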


\begin{proof}
	Applying decomposition \eqref{eqn: orthogonality decomposition} by taking $\mathbf{a} = \mathcal{H}^\epsilon_e(\m^\epsilon)$, one can write
	\begin{equation}\label{case of n le 2}
		\begin{aligned}
			\Vert \mathcal{H}^\epsilon_e(\m^\epsilon) \Vert_{L^{3}(\Omega)}^{3}
			\le &
			\int_{\Omega}   
			\vert \m^\epsilon \cdot\mathcal{H}^\epsilon_e(\m^\epsilon) \vert^3  \d \bx\\
			& +
			\int_{\Omega}
			\vert \m^\epsilon \times \mathcal{H}^\epsilon_e(\m^\epsilon) \vert^3  \d \bx
			=: \mathcal{I}_1 + \mathcal{I}_2.
		\end{aligned}
	\end{equation}
	Now let us estimate the right-hand side of \eqref{case of n le 2} separately. For $\mathcal{I}_1$, we apply \eqref{eqn:nabla m dot nabla m} and Remark \ref{lemma: regularity for epslon} to derive
	\begin{equation*}
		\begin{aligned}
			\mathcal{I}_1
			\le &
			C + C
			\Vert \nabla \m^\epsilon \Vert_{L^{6}(\Omega)}^6  
			\le 
			C + C\Vert \mathcal{H}^\epsilon_e (\m^\epsilon) \Vert_{L^{2}(\Omega)}^{6}.
		\end{aligned}
	\end{equation*}
	As for $\mathcal{I}_2$, we have by Sobolev inequality for $n\le 3$
	\begin{align*}
		& \mathcal{I}_2
		\leq C +C\Vert \m^\epsilon \times \mathcal{H}^\epsilon_e(\m^\epsilon) \Vert_{L^{2}(\Omega)}^{6}
		+\delta^*\Vert \m^\epsilon \times \mathcal{H}^\epsilon_e(\m^\epsilon) \Vert_{H^{1}(\Omega)}^{2}, \label{eqn:process in regurality 2}
	\end{align*}
	here in the last term, we can apply \eqref{eqn:nabla m dot nabla m}-\eqref{relation between A and H} to derive:
	\begin{equation*}\label{eqn:process in regurality}
		\begin{aligned}
			\delta^* \Vert \nabla \m^\epsilon \times \mathcal{H}^\epsilon_e(\m^\epsilon) \Vert_{L^{2}(\Omega)}^{2}
			\le &
			\delta^* \Vert \nabla \m^\epsilon \Vert_{L^{6}(\Omega)}^{6}
			+
			\delta^* \Vert \mathcal{H}^\epsilon_e(\m^\epsilon) \Vert_{L^{3}(\Omega)}^{3}\\
			\le &
			C + C\delta^* \Vert \mathcal{H}^\epsilon_e(\m^\epsilon) \Vert_{L^{3}(\Omega)}^{3}.
		\end{aligned}
	\end{equation*}
	Now let us turn back to \eqref{case of n le 2}, we finally obtain
	\begin{equation*}
		\begin{aligned}
			(1 - C \delta^*)\Vert \mathcal{H}^\epsilon_e(\m^\epsilon) \Vert_{L^{3}(\Omega)}^{3}
			\le &
			C + C\Vert \mathcal{H}^\epsilon_e (\m^\epsilon) \Vert_{L^{2}(\Omega)}^{6}
			+ \delta^*
			\Vert \m^\epsilon \times \nabla \mathcal{H}^\epsilon_e(\m^\epsilon) \Vert_{L^{2}(\Omega)}^{2}.
		\end{aligned}
	\end{equation*}
	Let $\delta^* < \frac{1}{2C}$, one can derive \eqref{term of high order} with $\delta = \delta^*/(1 - C \delta^*)< 1$.
\end{proof}

Now let us recall some energy property of LLG equation, and give the uniform regularity result. Using the formula of vector outer production
\begin{equation}\label{eqn:vec product}
	\mathbf{a}\times (\mathbf{b} \times \mathbf{c}) = (\mathbf{a} \cdot \mathbf{c}) \mathbf{b} - (\mathbf{a}\cdot\mathbf{b}) \mathbf{c},
\end{equation}
one can rewrite LLG equation \eqref{eqn:LLG system form 3} into a degenerate form
\begin{equation}\label{eqn:LLG system form 2}
	\partial_t\m^\epsilon 
	+ \alpha \m^\epsilon \times 
	\big(\m^\epsilon \times \mathcal{H}^\epsilon_e(\m^\epsilon) \big) 
	+ \m^\epsilon \times \mathcal{H}^\epsilon_e(\m^\epsilon) 
	=0.
\end{equation}
Multiplying \eqref{eqn:LLG system form 2} by $\mathcal{H}^\epsilon_e(\m^\epsilon)$ and integrating over $(0,t)$, we derive the energy dissipation identity
\begin{equation}\label{energy decay}
	\G^\epsilon[\m^\epsilon(t)] + 
	\alpha \int_0^t \Vert \m^\epsilon \times \mathcal{H}^\epsilon_e(\m^\epsilon) \Vert_{L^2(\Omega)}^2 \d \tau = \G^\epsilon[\m^\epsilon(0)],
\end{equation}
together \eqref{eqn:LLG system form 2} and \eqref{energy decay} leads to the integrable of kinetic energy
\begin{align}\label{integrable of kinetic energy}
	\frac{\alpha}{1+\alpha^2}\int_0^t \Vert \partial_t \m^\epsilon \Vert_{L^2(\Omega)}^2 \d \tau &
	\le 
	\G^\epsilon[\m^\epsilon(0)].
\end{align}
The energy identity \eqref{energy decay} implies the uniform regularity of $\Vert \m^\epsilon \times \mathcal{A}_\epsilon\m^\epsilon \Vert_{L^2(\Omega)}^2$, however, this is not enough to obtain the regularity of $\Vert \mathcal{A}_\epsilon\m^\epsilon \Vert_{L^2(\Omega)}^2$ due to the degeneracy. In this end we introduce that:
%

\begin{theorem}\label{regularety 3}
	Let $\m^\epsilon\in L^2([0,T];H^3(\Omega))$ be a solution to \eqref{eqn:LLG system}. Assume $n\le 3$, then there exists $T^*\in(0, T]$ independent of $\epsilon$, such that for $0\le t\le T^*$,
	\begin{equation*}
		\Vert \mathcal{A}_\epsilon \m^\epsilon  (t) \Vert_{L^{2}(\Omega)}^2 
		+
		\int_0^{t} \big\Vert \m^\epsilon \times \nabla \mathcal{H}^\epsilon_e(\m^\epsilon)  
				(\tau) \big\Vert_{L^{2}(\Omega)}^2 \d \tau  \le C,
	\end{equation*}
	and therefore, by the Sobolev-type inequality in Remark \ref{lemma: regularity for epslon},
	\begin{equation*}
		\Vert \nabla \m^\epsilon  (\cdot, t)\Vert_{L^{6}(\Omega)}^2
		\le
		C,
	\end{equation*}
	where $C$ is a constant independent of $\epsilon$ and $t$.
	
\end{theorem}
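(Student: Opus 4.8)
The plan is to derive a differential inequality for the quantity $E(t) := \tfrac12\Vert \mathcal{H}^\epsilon_e(\m^\epsilon)(t)\Vert_{L^2(\Omega)}^2$ (equivalently, up to the additive constant from \eqref{relation between A and H}, for $\Vert \mathcal{A}_\epsilon\m^\epsilon\Vert_{L^2}^2$), and then run a continuation/bootstrap argument to get boundedness on a time interval $[0,T^*]$ whose length is independent of $\epsilon$. First I would differentiate $\mathcal{H}^\epsilon_e(\m^\epsilon)$ in time and pair with $\partial_t\mathcal{H}^\epsilon_e(\m^\epsilon)$; using the linear structure of $\widetilde{\mathcal{H}}^\epsilon_e$ in $\m^\epsilon$ and the equation \eqref{eqn:LLG system form 2}, the leading term is the dissipative one coming from $-\alpha\,\mathcal{A}_\epsilon\big(\m^\epsilon\times(\m^\epsilon\times\mathcal{H}^\epsilon_e)\big)$, which after integration by parts (the homogeneous Neumann condition on $\m^\epsilon$ must be checked to carry over so that no boundary terms appear) produces a coercive term controlling $\Vert \nabla(\m^\epsilon\times(\m^\epsilon\times\mathcal{H}^\epsilon_e))\Vert_{L^2}^2$, hence, via the orthogonal decomposition \eqref{eqn: orthogonality decomposition} and \eqref{eqn:nabla m dot nabla m}–\eqref{relation between A and H}, a term comparable to $\Vert \m^\epsilon\times\nabla\mathcal{H}^\epsilon_e(\m^\epsilon)\Vert_{L^2}^2$ (absorbing lower-order pieces involving $\nabla\m^\epsilon\otimes\mathcal{H}^\epsilon_e$).

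Next I would bound the remaining (non-sign-definite) terms on the right-hand side. These are cubic and quartic expressions in $\m^\epsilon$, $\nabla\m^\epsilon$, $\mathcal{H}^\epsilon_e(\m^\epsilon)$ and their derivatives — the precession term $\m^\epsilon\times\mathcal{H}^\epsilon_e$, the gyromagnetic cross terms, and the contributions of the non-exchange parts of $\mathcal{H}^\epsilon_e$ (anisotropy, stray field via \eqref{bound of stray field}, Zeeman), all of which are lower order. The key point is that every such term can be estimated, after Hölder and Sobolev embeddings valid for $n\le 3$, by
\[
C_\delta + C_\delta\Vert\mathcal{H}^\epsilon_e(\m^\epsilon)\Vert_{L^2(\Omega)}^6 + \delta\,\Vert \m^\epsilon\times\nabla\mathcal{H}^\epsilon_e(\m^\epsilon)\Vert_{L^2(\Omega)}^2,
\]
which is precisely the interpolation inequality of Lemma \ref{lemma:estimate of H in L3} (together with \eqref{eqn:nabla m dot nabla m} to trade $\Vert\nabla\m^\epsilon\Vert_{L^6}^6$ against $\Vert\mathcal{H}^\epsilon_e\Vert_{L^3}^3$). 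Choosing $\delta$ small enough to absorb the bad terms into the dissipative one, I arrive at a differential inequality of the form
\[
\frac{\d}{\d t} E(t) + c\,\Vert \m^\epsilon\times\nabla\mathcal{H}^\epsilon_e(\m^\epsilon)\Vert_{L^2(\Omega)}^2 \le C + C\,E(t)^3 ,
\]
with $c,C>0$ independent of $\epsilon$.

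Finally I would integrate this Riccati-type inequality. Since $E(0)=\tfrac12\Vert\mathcal{H}^\epsilon_e(\m^\epsilon_{\mathrm{init}})\Vert_{L^2}^2$ is bounded uniformly in $\epsilon$ — here Assumption \ref{assumption}, in particular \eqref{compatibility condition of initial data}–\eqref{initial data}, is used to control $\Vert\mathcal{A}_\epsilon\m^\epsilon_{\mathrm{init}}\Vert_{L^2}=\Vert\mathcal{A}_0\m^0_{\mathrm{init}}\Vert_{L^2}$ — the comparison ODE $y' = C + Cy^3$ with that initial datum stays finite on a time interval $[0,T^*]$ with $T^*>0$ depending only on $E(0)$ and the structural constants, not on $\epsilon$; take $T^*$ the minimum of this and the given $T$. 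This yields the uniform bound on $\Vert\mathcal{A}_\epsilon\m^\epsilon(t)\Vert_{L^2}^2$ and, after integrating the dissipative term, on $\int_0^t\Vert\m^\epsilon\times\nabla\mathcal{H}^\epsilon_e(\m^\epsilon)\Vert_{L^2}^2\,\d\tau$; the $W^{1,6}$ bound then follows immediately from Lemma \ref{lemma: regularity for epslon} (Remark form) since $\m^\epsilon$ satisfies the homogeneous Neumann condition. The main obstacle is the energy estimate of the second paragraph: one must verify that, despite the degeneracy, the highest-order term genuinely controls $\Vert\m^\epsilon\times\nabla\mathcal{H}^\epsilon_e\Vert_{L^2}^2$ (not the full $\Vert\nabla\mathcal{H}^\epsilon_e\Vert_{L^2}^2$), and that all the cubic/quartic error terms — especially those mixing two derivatives of $\m^\epsilon$ with $\mathcal{H}^\epsilon_e$ — can be cast into exactly the right-hand side tolerated by Lemma \ref{lemma:estimate of H in L3}; getting the exponents to close (the cube on $\Vert\mathcal{H}^\epsilon_e\Vert_{L^2}^2$, and the smallness of the top-order remainder) is the delicate bookkeeping, and the oscillatory coefficient $a^\epsilon$ forces one to use only $\epsilon$-independent bounds such as \eqref{eqn:nabla m dot nabla m}, \eqref{relation between A and H}, and Lemma \ref{lemma: regularity for epslon} rather than naive elliptic regularity.
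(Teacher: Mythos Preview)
Your proposal is correct and follows essentially the same route as the paper: the paper tests $\nabla$ of \eqref{eqn:LLG system form 2} against $a^\epsilon\nabla\mathcal{H}^\epsilon_e(\m^\epsilon)$, which after one integration by parts is exactly your computation of $\frac{\d}{\d t}\tfrac12\Vert\mathcal{H}^\epsilon_e(\m^\epsilon)\Vert_{L^2}^2$ up to the lower-order $\Gamma^\epsilon$ pieces, and then uses Lemma~\ref{lemma:estimate of H in L3}, the degenerate dissipation $\sum_{i,j}\int a^\epsilon_{ij}(\m^\epsilon\times\partial_i\mathcal{H}^\epsilon_e)\cdot(\m^\epsilon\times\partial_j\mathcal{H}^\epsilon_e)$, and the Riccati/comparison argument exactly as you outline. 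The only cosmetic difference is that the paper tracks $\Vert\mathcal{A}_\epsilon\m^\epsilon\Vert_{L^2}^2$ rather than $\Vert\mathcal{H}^\epsilon_e\Vert_{L^2}^2$, which produces an extra $C\Vert\partial_t\m^\epsilon\Vert_{L^2}^2$ on the right-hand side (handled by \eqref{integrable of kinetic energy} after integrating in time) and a harmless total-derivative term $\frac{\d}{\d t}\int\mathcal{A}_\epsilon\m^\epsilon\cdot\Gamma^\epsilon(\m^\epsilon)$.
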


\begin{proof}
	Applying $\nabla$ to \eqref{eqn:LLG system form 2} and multiplying by $ a^\epsilon \nabla \mathcal{H}^\epsilon_e(\m^\epsilon)$ lead to
	\begin{equation}\label{eqn:regu esti in 3d}
		\begin{aligned}
			& -
			\int_{\Omega} \nabla (\partial_t \m^\epsilon) \cdot a^\epsilon \nabla \mathcal{H}^\epsilon_e(\m^\epsilon) \d \bx \\
			= & 
			\alpha \int_{\Omega}  \nabla 
			\big(\m^\epsilon\times(\m^\epsilon\times \mathcal{H}^\epsilon_e(\m^\epsilon)) \big)
			\cdot a^\epsilon \nabla \mathcal{H}^\epsilon_e(\m^\epsilon)  \d \bx\\
			& +
			\sum_{i,j=1}^{n}
			\int_{\Omega} \frac{\partial}{\partial x_i} \m^\epsilon \times \mathcal{H}^\epsilon_e(\m^\epsilon) 
			\cdot a^\epsilon_{ij} \frac{\partial}{\partial x_j} \mathcal{H}^\epsilon_e(\m^\epsilon) \d \bx  
			=: 
			\mathcal{J}_1 + \mathcal{J}_2.
		\end{aligned}
	\end{equation}
	Denote $\Gamma^\epsilon(\m^\epsilon) = \mathcal{H}^\epsilon_e(\m^\epsilon)  - \mathcal{A}_\epsilon\m^\epsilon$. After integration by parts, the left-hand side of \eqref{eqn:regu esti in 3d} becomes
	\begin{equation*}
		\begin{aligned}
			& -
			\int_{\Omega} \nabla (\partial_t \m^\epsilon) \cdot a^\epsilon \nabla \mathcal{H}^\epsilon_e(\m^\epsilon) \d \bx 
			= 
			\int_{\Omega} \mathcal{A}_\epsilon (\partial_t \m^\epsilon) \cdot \big( \mathcal{A}_\epsilon\m^\epsilon + \Gamma^\epsilon(\m^\epsilon) \big) \d \bx,
		\end{aligned}
	\end{equation*}
	where the right-hand side can be rewritten as 
	\begin{equation*}\label{eqn:regu esti in 3d 2}
		\begin{aligned}
			\frac{1}{2}\frac{\d }{\d t}
			\int_{\Omega} \vert \mathcal{A}_\epsilon \m^\epsilon \vert^2 \d \bx 
			+
			\frac{\d }{\d t}
			\int_{\Omega} \mathcal{A}_\epsilon  \m^\epsilon \cdot \Gamma^\epsilon(\m^\epsilon) \d \bx  
			-
			\int_{\Omega} \mathcal{A}_\epsilon (\m^\epsilon) \cdot \Gamma^\epsilon(\partial_t \m^\epsilon) \d \bx.
		\end{aligned}
	\end{equation*}
	Now let us consider the right-hand side of \eqref{eqn:regu esti in 3d}. For $\mathcal{J}_1$, one can derive by swapping the order of mixed product
	\begin{equation*}\label{eqn:regu esti in 3d 3}
		\begin{aligned}
			\mathcal{J}_1
			=  -
			\alpha \sum_{i, j= 1}^n\int_{\Omega}   
			\big(\m^\epsilon\times \frac{\partial}{\partial x_i} \mathcal{H}^\epsilon_e(\m^\epsilon)\big) 
			\cdot 	a^\epsilon_{ij} \big(\m^\epsilon\times 
			\frac{\partial}{\partial x_j}\mathcal{H}^\epsilon_e(\m^\epsilon)\big)  \d \bx 
			+ F_1 ,
		\end{aligned}
	\end{equation*}
	here the first term on right-hand side is sign-preserved due to the uniform coerciveness of $a^\epsilon$ in \eqref{uniformly coercive}.
	As for $\mathcal{J}_2$, we apply \eqref{eqn: orthogonality decomposition} by taking $\mathbf{a} = a^\epsilon_{ij} \partial_j \mathcal{H}^\epsilon_e(\m^\epsilon)$, it leads to
	\begin{equation}\label{F_2}
		\begin{aligned}
			\mathcal{J}_2 = & \sum_{i,j=1}^{n}
			\int_{\Omega} \m^\epsilon \times\big( \frac{\partial}{\partial x_i} \m^\epsilon \times \mathcal{H}^\epsilon_e(\m^\epsilon) \big) 
			\cdot
			\big( \m^\epsilon \times  
			a^\epsilon_{ij} \frac{\partial}{\partial x_j} \mathcal{H}^\epsilon_e(\m^\epsilon) \big) \d \bx\\
			& - \sum_{i,j=1}^{n}
			\int_{\Omega} \big( \m^\epsilon \times \mathcal{H}^\epsilon_e(\m^\epsilon) \cdot \frac{\partial}{\partial x_i} \m^\epsilon  \big)
			\m^\epsilon  
			\cdot a^\epsilon_{ij} \frac{\partial}{\partial x_j} \mathcal{H}^\epsilon_e(\m^\epsilon) \d \bx.
		\end{aligned}
	\end{equation}
	Using property of vector outer production \eqref{eqn:vec product} for first term, and integration by parts for the second term, \eqref{F_2} becomes
	\begin{equation*}\label{eqn:transform with F_1}
		\begin{aligned}
			\mathcal{J}_2 =& 2 \sum_{i,j=1}^{n}
			\int_{\Omega} \big( \m^\epsilon \cdot \mathcal{H}^\epsilon_e(\m^\epsilon) \big)
			\big( \m^\epsilon \times  
			a^\epsilon_{ij} \frac{\partial}{\partial x_j} \mathcal{H}^\epsilon_e(\m^\epsilon)
			\cdot \frac{\partial}{\partial x_i} \m^\epsilon \big) \d \bx + F_2\\
			\le & C
			\Vert \nabla \m^\epsilon \Vert_{L^{6}(\Omega)}^6
			+
			C
			\Vert \mathcal{H}^\epsilon_e(\m^\epsilon) \Vert_{L^{3}(\Omega)}^3
			+
			\delta \Vert \m^\epsilon \times \nabla \mathcal{H}^\epsilon_e(\m^\epsilon) \Vert_{L^{2}(\Omega)}^{2} + F_2.
		\end{aligned}
	\end{equation*}
	Here low-order terms $F_i$, $i=1,2$ satisfies by \eqref{relation between A and H} and H\"{o}lder's inequality
	\begin{equation*}\label{bound of F_1 process result}
		\begin{aligned}
			F_i
			\le & C + C
			\Vert \nabla \m^\epsilon \Vert_{L^{6}(\Omega)}^6
			+
			C
			\Vert \mathcal{H}^\epsilon_e(\m^\epsilon) \Vert_{L^{3}(\Omega)}^3.
		\end{aligned}
	\end{equation*}
	Substituting above estimates into \eqref{eqn:regu esti in 3d}, applying estimate \eqref{eqn:nabla m dot nabla m} and Lemma \ref{lemma:estimate of H in L3}, we finally arrive at
	\begin{equation}\label{esimate of regular in 3d}
		\begin{aligned}
			& \frac{1}{2}\frac{\d }{\d t}
			\Vert \mathcal{A}_\epsilon  \m^\epsilon \Vert_{L^2(\Omega)}^2 
			+
			(\alpha a_{\mathrm{min}} - C\delta) \Vert \m^\epsilon \times \nabla \mathcal{H}^\epsilon_e(\m^\epsilon) \Vert_{L^{2}(\Omega)}^{2}  \\
			\le & C +
			C \Vert \mathcal{A}_\epsilon  \m^\epsilon \Vert_{L^2(\Omega)}^6 
			+ C
			\Vert \partial_t \m^\epsilon \Vert_{L^2(\Omega)}^2
			-
			\frac{\d }{\d t}
			\int_{\Omega} \mathcal{A}_\epsilon  \m^\epsilon \cdot \Gamma^\epsilon(\m^\epsilon) \d \bx ,
		\end{aligned}
	\end{equation}
	Integrating \eqref{esimate of regular in 3d} over $[0,t]$, using the integrability of kinetic energy \eqref{integrable of kinetic energy} and the following inequality
	\begin{align*}
		\int_{\Omega} \mathcal{A}_\epsilon  \m^\epsilon \cdot \Gamma^\epsilon(\m^\epsilon) \d \bx & 
		\le
		C\Vert \Gamma^\epsilon(\m^\epsilon)  \Vert_{L^{2}(\Omega)}^{2}
		+ \frac{1}{4}\Vert \mathcal{A}_\epsilon  \m^\epsilon \Vert_{L^{2}(\Omega)}^{2},
	\end{align*}
	one has for any $t\in (0,T]$
	\begin{equation}\label{esimate of regular by integrating}
		\begin{aligned}
			\frac{1}{4} \Vert \mathcal{A}_\epsilon  \m^\epsilon (t) \Vert_{L^2(\Omega)}^2 
			\le & C +
			C \int_0^t \Vert \mathcal{A}_\epsilon  \m^\epsilon (\tau)\Vert_{L^2(\Omega)}^6 \d \tau,
		\end{aligned}
	\end{equation}
	where $C$ depends on $\Vert\mathcal{A}_\epsilon \m^\epsilon_{\mathrm{init}}\Vert_{L^{2}(\Omega)}$, $\G^\epsilon[\m^\epsilon_{\mathrm{init}}]$ thus is independent of $\epsilon$ and $t$ by assumption \eqref{compatibility condition of initial data}-\eqref{initial data} and Lemma \ref{lemma: regularity for epslon}.
	Denote the right-hand side of \eqref{esimate of regular by integrating} by $F(t)$ and write
	\begin{equation*}
		\frac{\d}{\d t} F(t) \le C F^3(t).
	\end{equation*}
	By the Cauchy-Lipshitz-Picard Theorem \cite{brezis2011functional} and comparison principle, there exists $T^*\in (0,T]$ independent of $\epsilon$, such that $F(t)$ is uniformly bounded on $[0,T^*]$,
	thus $\Vert \mathcal{A}_\epsilon  \m^\epsilon (t) \Vert_{L^2(\Omega)}^2 $ is uniformly bounded by \eqref{esimate of regular by integrating}.  The Lemma is proved.
\end{proof}

\section*{Acknowledgments}
J. Chen was supported by National Natural Science Foundation of China via grant 11971021. J.-G. Liu was
supported by Natural Science Foundation via grant DMS-2106988.
Z. Sun was supported by the Postgraduate Research \& Practice Innovation Program of Jiangsu
Province via grant KYCX21\_2934.

\bibliographystyle{amsplain}
\bibliography{ref}

\providecommand{\bysame}{\leavevmode\hbox to3em{\hrulefill}\thinspace}
\providecommand{\MR}{\relax\ifhmode\unskip\space\fi MR }
\providecommand{\MRhref}[2]{%
  \href{http://www.ams.org/mathscinet-getitem?mr=#1}{#2}
}
\providecommand{\href}[2]{#2}
\begin{thebibliography}{10}

\bibitem{alouges2019stochastic}
Francois Alouges, Anne~De Bouard, Beno\^{i}t Merlet, and L\'{e}a Nicolas,
  \emph{Stochastic homogenization of the {Landau-Lifshitz-Gilbert} equation},
  Stochastics and Partial Differential Equations: Analysis and Computations
  \textbf{9} (2021), 789--818.

\bibitem{Alouges_2015}
Francois Alouges and Giovanni Di~Fratta, \emph{Homogenization of composite
  ferromagnetic materials}, Proceedings of the Royal Society A: Mathematical,
  Physical and Engineering Sciences \textbf{471} (2015), 20150365.

\bibitem{brezis2011functional}
Haim Brezis and Haim Br{\'e}zis, \emph{Functional analysis, sobolev spaces and
  partial differential equations}, vol.~2, Springer, 2011.

\bibitem{chen2021}
Jingrun Chen, Rui Du, Zetao Ma, Zhiwei Sun, and Zhang Lei, \emph{On the
  multiscale {Landau-Lifshitz-Gilbert} equation: Two-scale convergence and
  stability analysis}, Multiscale Modeling \& Simulation (2022), in press.

\bibitem{choquet2018homogenization}
Catherine Choquet, Mohammed Moumni, and Mouhcine Tilioua, \emph{Homogenization
  of the {Landau-Lifshitz-Gilbert} equation in a contrasted composite medium},
  Discrete \& Continuous Dynamical Systems-S \textbf{11} (2018), 35.

\bibitem{Gilbert1955}
T.~L. Gilbert, \emph{A {Lagrangian} formulation of gyromagnetic equation of the
  magnetization field}, Physical Review D \textbf{100} (1955), 1243--1255.

\bibitem{LandauLifshitz1935}
Lev~Davidovich Landau and E~Lifshitz, \emph{On the theory of the dispersion of
  magnetic permeability in ferromagnetic bodies}, Physikalische Zeitschrift der
  Sowjetunion \textbf{8} (1935), 153--169.

\bibitem{leitenmaier2021homogenization}
Lena Leitenmaier and Olof Runborg, \emph{On homogenization of the
  {L}andau-{L}ifshitz equation with rapidly oscillating material coefficient},
  Communications in Mathematical Sciences \textbf{20} (2022), 653--694.

\bibitem{leitenmaier2022upscaling}
\bysame, \emph{Upscaling errors in heterogeneous multiscale methods for the
  {L}andau-{L}ifshitz equation}, Multiscale Modeling \& Simulation \textbf{20}
  (2022), 1--35.

\bibitem{Praetorius2004AnalysisOT}
Dirk Praetorius, \emph{Analysis of the operator delta div arising in magnetic
  models}, Zeitschrift Fur Analysis Und Ihre Anwendungen \textbf{23} (2004),
  589--605.

\bibitem{SANTUGINIREPIQUET2007502}
K\'{e}vin Santugini-Repiquet, \emph{Homogenization of the demagnetization field
  operator in periodically perforated domains}, Journal of Mathematical
  Analysis and Applications \textbf{334} (2007), 502--516.

\bibitem{shen2018periodic}
Zhongwei Shen, \emph{Periodic homogenization of elliptic systems}, Springer,
  2018.

\end{thebibliography}

\end{document}